\newtheorem{thm}{Theorem}[section]
\newtheorem{cor}[thm]{Corollary}
\newtheorem{lem}[thm]{Lemma}
\theoremstyle{definition}
\newtheorem{defin}[thm]{Definition}
\def\eref#1{$(\ref{#1})$}
\def\sref#1{\S$\ref{#1}$}
\def\lref#1{Lemma~$\ref{#1}$}
\def\tref#1{Theorem~$\ref{#1}$}
\def\cyref#1{Corollary~$\ref{#1}$}
\def\dref#1{Definition~$\ref{#1}$}
\def\Id{\epsilon}
\def\F{\mathbb{F}}
\def\K{\mathbb{K}}
\def\R{\mathcal{R}}
\def\N{\mathcal{N}}
\def\L{\mathcal{L}}
\def\T{\mathcal{T}}
\def\Q{\mathcal{Q}}
\def\Atp{\textnormal{Atp}}
\def\Aut{\textnormal{Aut}}
\def\Sym{\textnormal{Sym}}
\def\Alt{\textnormal{Alt}}
\def\GL{\textnormal{GL}}
\def\GammaL{\Gamma\textnormal{L}}
\def\AGL{\textnormal{AGL}}
\def\PGL{\textnormal{PGL}}
\def\PSL{\textnormal{PSL}}
\def\Sym{\textnormal{Sym}}
\def\Gal{\textnormal{Gal}}
\def\PGammaL{\textnormal{P}\Gamma\textnormal{L}}
\def\AGammaL{\textnormal{A}\Gamma\textnormal{L}}
\def\AGamma2L{\textnormal{A}\Gamma^2\textnormal{L}}
\renewcommand{\geq}{\geqslant}
\renewcommand{\leq}{\leqslant}
\renewcommand{\emptyset}{\varnothing}
\title{Isotopisms of quadratic quasigroups}
\author{Jack Allsop\\
	\small School of Mathematics\\[-0.5ex]
	\small Monash University\\[-0.5ex]
	\small Vic 3800, Australia\\
	\small\tt jack.allsop@monash.edu}
\date{}
\begin{document}
	\maketitle
	
	\begin{abstract}
		A quasigroup is a pair $(Q, \cdot)$ where $Q$ is a non-empty set and $\cdot$ is a binary operation on $Q$ such that for every $(u, v) \in Q^2$ there exists a unique $(x, y) \in Q^2$ such that $u \cdot x = v = y \cdot u$. Let $q$ be an odd prime power, let $\F_q$ denote the finite field of order $q$, and let $\R_q$ denote the set of non-zero squares in $\F_q$. Let $\{a, b\} \subseteq \F_q$ be such that $\{ab, (a-1)(b-1)\} \subseteq \R_q$. Let $\Q_{a, b}$ denote the quadratic quasigroup $(\F_q, *_{a, b})$ where $*_{a, b}$ is defined by
		\[
		x*_{a, b}y = \begin{cases}
			x+a(y-x) & \text{if } y-x \in \R_q, \\
			x+b(y-x) & \text{otherwise}.
		\end{cases}
		\]
		The operation table of a quadratic quasigroup is a quadratic Latin square. Recently, it has been determined exactly when two quadratic quasigroups are isomorphic and the automorphism group of any quadratic quasigroup has been determined. In this paper, we extend these results. We determine exactly when two quadratic quasigroups are isotopic and we determine the autotopism group of any quadratic quasigroup.
		In the process, we count the number of $2 \times 2$ subsquares in quadratic Latin squares.
	\end{abstract}
	
	\section{Introduction}\label{s:intro}
	
	Throughout this paper, $p$ will denote an odd prime, $d$ will denote a positive integer, and $q = p^d$ will be an odd prime power.
	
	A \emph{quasigroup} is a pair $(Q, \cdot)$ where $Q$ is a non-empty set and $\cdot$ is a binary operation on $Q$ such that for every $(u, v) \in Q^2$ there exists a unique $(x, y) \in Q^2$ such that $u \cdot x = v = y \cdot u$. We will often refer to the quasigroup $(Q, \cdot)$ simply by $Q$. If $Q$ is finite, then $|Q|$ is the \emph{order} of $Q$. We are only concerned with finite quasigroups in this paper.
	
	Let $n$ be a positive integer. A \emph{Latin square} of order $n$ is an $n \times n$ matrix of $n$ symbols, each of which occurs exactly once in each row and column. In this paper, we will assume that the rows and columns of any Latin square are indexed by its symbol set. With this convention, Latin squares are combinatorial equivalents of finite quasigroups, in the sense that the operation table of any quasigroup of order $n$ is a Latin square of order $n$ and any Latin square of order $n$ can be interpreted as the operation table of a quasigroup of order $n$.
	
	Let $\F_q$ denote the finite field of order $q$, let $\R_q$ denote the set of quadratic residues in the multiplicative group $\F_q^*$, and let $\N_q$ denote the set of quadratic non-residues in $\F_q^*$. 
	Let $\{a, b\} \subseteq \F_q$ be such that $\{ab, (a-1)(b-1)\} \subseteq \R_q$. Define a binary operation $* = *_{a, b}$ on $\F_q$ by 
	\[
	x*y = \begin{cases}
		x & \text{if } x=y,\\
		x+a(y-x) & \text{if } y-x \in \R_q,\\
		x+b(y-x) & \text{if } y-x \in \N_q.
	\end{cases}
	\]
The pair $(\F_q, *)$ defines a quasigroup, called a \emph{quadratic quasigroup}, which is denoted by $\Q_{a, b}$. The operation table of a quadratic quasigroup is a \emph{quadratic Latin square}. The operation table of $\Q_{a, b}$ is denoted by $\L[a, b]$. Note that the operation $*_{a, b}$ can be defined for any $\{a, b\} \subseteq \F_q$, but the condition $\{ab, (a-1)(b-1)\} \subseteq \R_q$ is necessary and sufficient for $(\F_q, *_{a, b})$ to be a quasigroup~\cite{orthgraph}.
	
	Quadratic quasigroups have previously been used to construct perfect $1$-factorisations~\cite{nu4, p1f16, cycatom}, anti-perfect $1$-factorisations~\cite{quadcycs}, mutually orthogonal Latin squares~\cite{orthls, orthgraph}, atomic Latin squares~\cite{cycatom}, anti-atomic Latin squares~\cite{quadcycs}, $N_2$ Latin squares~\cite{quadcycs}, Latin trades~\cite{bitrades}, Falconer varieties~\cite{nu4}, and maximally non-associative quasigroups~\cite{mnaq, numquad, maxnonass}.
	
	Let $X$ be a set. Let $\Sym(X)$ denote the group of permutations of $X$ and let $\Alt(X)$ denote the group of even permutations of $X$. Let $\Id \in \Sym(X)$ denote the identity permutation of $X$. Whenever we use the symbol $\Id$, the set that it is acting on will be clear from context. 
	In this paper, $f \circ g$ denotes the function $x \mapsto f(g(x))$. 
	
	Let $(Q, *)$ and $(Q', \cdot)$ be quasigroups. If there exist bijective maps $\alpha, \beta, \gamma : Q \to Q'$ such that $\alpha(x) \cdot \beta(y) = \gamma(x*y)$ for all $\{x, y\} \subseteq Q$, then $Q$ and $Q'$ are \emph{isotopic} and $(\alpha, \beta, \gamma)$ is an \emph{isotopism} from $Q$ to $Q'$.	
	If there exists a bijective map $\theta : Q \to Q'$ such that $\theta(x) \cdot \theta(y) = \theta(x*y)$ for all $\{x, y\} \subseteq Q$, then $Q$ and $Q'$ are \emph{isomorphic} and $\theta$ is an \emph{isomorphism} from $Q$ to $Q'$. An \emph{autotopism} of $Q$ is an isotopism from $Q$ to itself and we denote the group of autotopisms of $Q$ by $\Atp(Q)$. An \emph{automorphism} of $Q$ is an isomorphism from $Q$ to itself and we denote the group of automorphisms of $Q$ by $\Aut(Q)$. We will also refer to a triple $(\theta, \theta, \theta) \in \Atp(Q)$ as an automorphism of $Q$. 
	
Let $n$ be a positive integer and let $(Q, *)$ be a quasigroup of order $n$. We can identify $Q$ with the set of $n^2$ triples $\{(x, y, x*y) : \{x, y\} \subseteq Q\}$.
A \emph{parastrophe} of $Q$ is a quasigroup obtained from $Q$ by uniformly permuting the elements of each of its triples. Every quasigroup has six, not necessarily distinct, parastrophes.
The notions of isotopisms, isomorphisms, and parastrophes extend naturally to Latin squares.
	
	Dr\'{a}pal and Wanless~\cite{quadiso} determined the automorphism groups of quadratic quasigroups and also determined exactly when two quadratic quasigroups are isomorphic. Before stating their results, we need to give some definitions. 
	
	Let $\mathbb{K} \leq \mathbb{F}_q$. A permutation $\psi \in \Sym(\F_q)$ is \emph{additive} if $\psi(x+y) = \psi(x)+\psi(y)$ for all $\{x, y\} \subseteq \F_q$ and $\psi$ is \emph{$\K$-linear} if $\psi$ is additive and $\psi(kx) = k\psi(x)$ for all $k \in \K$ and $x \in \F_q$. For $c \in \F_q$, let $\tau_c : \F_q \to \F_q$ be defined by $x \mapsto x+c$ and let $\lambda_c : \F_q \to \F_q$ be defined by $x \mapsto cx$. Define $\T_q = \{\tau_c : c \in \F_q\}$. We can identify the general linear group $\GL_d(p)$ with the set of additive permutations in $\Sym(\F_q)$ and we can identify the affine general linear group $\AGL_d(p)$ with the set of permutations in $\Sym(\F_q)$ of the form $\tau_c \circ \sigma$ where $\sigma \in \GL_d(p)$ and $c \in \F_q$. 
	
	Denote the automorphism group of $\F_q$ by $\Aut(\F_q)$ and denote the group of automorphisms of $\F_q$ which fix a subfield $\K$ of $\F_q$ pointwise by $\Gal(\F_q | \K)$. Define $\AGamma2L_1(\F_q|\K)$ to be the group of all mappings of the form $x \mapsto \nu\theta(x)+\mu$ for some $\nu \in \R_q$, $\mu \in \F_q$, and $\theta \in \Gal(\F_q|\K)$. Suppose that $|\K| = \zeta^2$ for some integer $\zeta$. Then we can define $\AGammaL_1^{\textnormal{tw}}(\F_q|\K)$ to be the group consisting of all mappings in $\AGamma2L_1(\F_q | \K)$ along with all mappings of the form $x \mapsto \nu\theta(x^\zeta)+\mu$ for some $\nu \in \N_q$, $\mu \in \F_q$, and $\theta \in \Gal(\F_q|\K)$.
	
	The following two theorems were shown in~\cite{quadiso}.
	
	\begin{thm}\label{t:quadiso}
		Quadratic quasigroups $\Q_{a, b}$ and $\Q_{a', b'}$ of order $q$ are isomorphic if and only if there exists some $\theta \in \Aut(\F_q)$ such that $\{a, b\} = \{\theta(a'), \theta(b')\}$.
	\end{thm}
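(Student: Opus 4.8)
The plan is to treat the two directions separately, the ``if'' part by explicit construction and the ``only if'' part by a rigidity argument. For ``if'', suppose $\psi\in\Aut(\F_q)$ with $\{a,b\}=\{\psi(a'),\psi(b')\}$. Since a field automorphism maps $\R_q$ onto $\R_q$, the conditions $\{a'b',(a'-1)(b'-1)\}\subseteq\R_q$ pass to $(\psi(a'),\psi(b'))$, and a one-line verification shows $\psi$ is an isomorphism from $\Q_{a',b'}$ to $\Q_{\psi(a'),\psi(b')}$. It then suffices to note $\Q_{a,b}\cong\Q_{b,a}$: writing $x*_{a,b}y=x+f_{a,b}(y-x)$, where $f_{a,b}(z)$ is $az$ on $\R_q$, $bz$ on $\N_q$, and $0$ at $0$, any $c\in\N_q$ gives $\lambda_c(x)*_{a,b}\lambda_c(y)=cx+f_{a,b}\bigl(c(y-x)\bigr)=c\,(x*_{b,a}y)$ because $\lambda_c$ interchanges $\R_q$ and $\N_q$; hence $\lambda_c\colon\Q_{b,a}\to\Q_{a,b}$ is an isomorphism, and composing yields $\Q_{a',b'}\cong\Q_{a,b}$ in either case.

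For ``only if'', let $\theta$ be an isomorphism from $\Q_{a',b'}$ to $\Q_{a,b}$. First I would normalize. Each $\tau_c$ is an automorphism of $\Q_{a,b}$, as is each $\lambda_d$ with $d\in\R_q$, while $\lambda_d$ with $d\in\N_q$ is an isomorphism $\Q_{a,b}\to\Q_{b,a}$; so, post-composing $\theta$ with $\tau_{-\theta(0)}$ and then a suitable $\lambda_d$, I may assume $\theta(0)=0$ and $\theta(1)=1$, at worst replacing the target $\Q_{a,b}$ by $\Q_{b,a}$ --- which is harmless, as the conclusion only concerns the unordered pair $\{a,b\}=\{b,a\}$. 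Because an isomorphism conjugates the row permutation through a point to the row permutation through its image (and likewise for columns), and because in $\Q_{a,b}$ these are $f_{a,b}$ and $x\mapsto x+f_{a,b}(-x)$ when the point is $0$, the normalization yields $\theta\circ f_{a',b'}=f_{a,b}\circ\theta$, a companion identity for $x\mapsto x+f_{a',b'}(-x)$, and --- restating the isomorphism property --- $\theta\bigl(x+f_{a',b'}(y-x)\bigr)=\theta(x)+f_{a,b}\bigl(\theta(y)-\theta(x)\bigr)$ for all $x,y$.

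The rest of the plan is: (a) prove $\theta(\R_q)=\R_q$; (b) prove $\theta$ is additive; (c) conclude. For (a) I would use that $\{\R_q,\N_q\}$ is precisely the partition of $\F_q^*$ by the ``slope'' $f_{a',b'}(z)/z\in\{a',b'\}$, playing the row and column intertwining identities, and their analogues at every base point, against one another to rule out $\theta$ mixing the two classes, with $\theta(1)=1\in\R_q$ excluding the global swap. Given (a), the identity $\theta\circ f_{a',b'}=f_{a,b}\circ\theta$ reads $\theta(a'z)=a\,\theta(z)$ for $z\in\R_q$ and $\theta(b'z)=b\,\theta(z)$ for $z\in\N_q$; using (b), and that each of $\R_q$ and $\N_q$ spans $\F_q$ over the prime field, these extend to all $z\in\F_q$, and then a short computation shows $\theta$ restricts to a ring homomorphism on $\K:=\F_p(a',b')$ with $\theta(a')=a$ and $\theta(b')=b$. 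As $\theta$ fixes the prime field and is injective, its image is $\F_p(a,b)$, which equals $\K$ since finite subfields of $\F_q$ are determined by their order; so $\theta|_{\K}\in\Aut(\K)$, and any extension of $\theta|_{\K}$ to an automorphism of $\F_q$ (the restriction map $\Aut(\F_q)\to\Aut(\K)$ is surjective) is a field automorphism sending $\{a',b'\}$ to $\{a,b\}$.

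The hard part will be steps (a) and (b), which are really a single rigidity statement: an abstract isomorphism sees only the operation $*_{a',b'}$, whereas the conclusion forces it to respect both the additive group of $\F_q$ --- visible only implicitly, through the translation subgroup $\T_q\leq\Aut(\Q_{a',b'})$ --- and the quadratic partition. I expect this to require a combinatorial input rather than pure algebra: isolating configurations preserved by every isomorphism, such as $2\times 2$ subsquares or ``parallelogram'' coincidences $x*_{a',b'}y=u=x'*_{a',b'}y'$ with matched differences, and showing they are rigid enough to pin $\theta$ down to an $\AGammaL_1$-type map, with small orders $q$ and the special situations where $\Q_{a',b'}$ carries extra symmetry (notably $a',b'$ lying in a proper subfield) handled separately.
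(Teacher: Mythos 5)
First, a point of context: the paper does not prove \tref{t:quadiso} at all --- it is quoted from Dr\'{a}pal and Wanless~\cite{quadiso} ("The following two theorems were shown in~\cite{quadiso}"), so there is no in-paper proof to compare against; your attempt has to be judged as a proof of the cited result itself. Your ``if'' direction is fine: a field automorphism preserves $\R_q$ and hence carries $*_{a',b'}$ to $*_{\theta(a'),\theta(b')}$, and $\lambda_c$ with $c\in\N_q$ is an isomorphism between $\Q_{a,b}$ and $\Q_{b,a}$ (this is exactly the fact the paper quotes as~\cite[Lemma~7]{cycatom}). Your endgame (c) is also sound: granted that a normalized isomorphism $\theta$ fixes $0,1$, preserves the quadratic partition and is additive, the intertwining relation $\theta\circ\varphi[a',b']=\varphi[a,b]\circ\theta$ does give $\theta(a'z)=a\theta(z)$ on $\R_q$ and $\theta(b'z)=b\theta(z)$ on $\N_q$, additivity plus the fact that $\R_q$ spans $\F_q$ over $\F_p$ extends these to all of $\F_q$, and the restriction/extension argument through $\K=\F_p(a',b')$ is correct.

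The genuine gap is that steps (a) and (b) --- $\theta(\R_q)=\R_q$ and additivity of $\theta$ --- are never proved; you explicitly defer them to an unspecified ``combinatorial input'', and they are the entire content of the theorem. Worse, the rigidity statement you aim for (``pin $\theta$ down to an $\AGammaL_1$-type map'') is false as a claim about \emph{every} normalized isomorphism: by \tref{t:quadaut}, for $q=7$ and $\{a,b\}=\{3,5\}$ one has $\Aut(\Q)\cong\PSL_2(7)$, which contains automorphisms fixing $0$ and $1$ that are neither additive nor $\R_q$-preserving, so any correct argument must either produce a \emph{suitable} isomorphism rather than constrain all of them, or carry out a genuine case analysis of the exceptional symmetric situations (including $a=b$, where (a) is simply irrelevant and your plan does not separate it). This is precisely where the Dr\'{a}pal--Wanless proof does its real work; establishing that an abstract isomorphism must respect the hidden additive structure (e.g.\ via its action on the translation group $\T_q$ and on preserved configurations) requires machinery of the same order as the character-sum and permutation-group arguments this paper deploys for autotopisms, and nothing in your proposal supplies it. As it stands, the ``only if'' direction is a plan that restates the difficulty rather than a proof.
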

	
	\begin{thm}\label{t:quadaut}
		Let $\Q = \Q_{a, b}$ be a quadratic quasigroup of order $q$ and let $\K$ be the subfield of $\F_q$ generated by $a$ and $b$. Then $\Aut(\Q) = \AGamma2L_1(\F_q | \K)$ up to these exceptions:
		\begin{itemize}
			\item If $a=b$, then $\Aut(\Q) \cong \AGL_k(\K)$ where $k = [\F_q : \K]$. Every automorphism of $\Q$ is of the form $\tau_\mu \circ \sigma$ for some $\mu \in \F_q$ and some $\K$-linear map $\sigma \in \Sym(\F_q)$.
			\item If $|\mathbb{K}| = \zeta^2$ for some integer $\zeta$ and $b=a^\zeta$, then $\Aut(\Q) = \AGammaL_1^{\textnormal{tw}}(\F_q | \K)$.
			\item If $q=7$ and $\{a, b\} = \{3, 5\}$, then $\Aut(\Q) \cong \PSL_2(7)$.
		\end{itemize}
	\end{thm}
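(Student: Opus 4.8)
I would prove the two inclusions $\AGamma2L_1(\F_q | \K) \le \Aut(\Q)$ and $\Aut(\Q) \le \AGamma2L_1(\F_q | \K)$ separately, and treat the three exceptional families by themselves. Write $\chi$ for the quadratic character of $\F_q$, so that $x \in \R_q \iff \chi(x) = 1$ and $x \in \N_q \iff \chi(x) = -1$; the key observation is that $x *_{a, b} y$ depends on the ordered pair $(x, y)$ only through $x$ and through the pair $\bigl(y - x, \chi(y - x)\bigr)$.

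\textbf{The inclusion $\AGamma2L_1(\F_q | \K) \le \Aut(\Q)$, and the exceptional constructions.} I would check the three families of generators of $\AGamma2L_1(\F_q | \K)$. A translation $\tau_c$ is an automorphism because $*_{a, b}$ is invariant under $(x, y) \mapsto (x + c, y + c)$. A scaling $\lambda_\nu$ with $\nu \in \R_q$ is an automorphism because $\nu \R_q = \R_q$ and $\nu \N_q = \N_q$, so multiplication by $\nu$ does not change which branch is used, while $\nu\bigl(x + a(y - x)\bigr) = \nu x + a(\nu y - \nu x)$. A field automorphism $\phi \in \Gal(\F_q | \K)$ is an automorphism because it is additive, permutes $\R_q$, and fixes $a$ and $b$ (which lie in $\K$ and generate it). For the exceptional families: if $a = b$ then $x * y = (1 - a)x + ay$ is a $\K$-affine map of the $\K$-space $\F_q$, so every $x \mapsto \sigma(x) + \mu$ with $\sigma$ a $\K$-linear bijection is an automorphism, yielding $\AGL_k(\K) \le \Aut(\Q)$; if $|\K| = \zeta^2$ and $b = a^\zeta$, a short computation --- using that $x \mapsto x^\zeta$ is an additive bijection that fixes $\R_q$ and $\N_q$, that $\nu \in \N_q$ interchanges $\R_q$ and $\N_q$, and that $a^{\zeta^2} = a$ --- shows each twisted map $x \mapsto \nu\phi(x^\zeta) + \mu$ with $\nu \in \N_q$ is an automorphism; and for $q = 7$, $\{a, b\} = \{3, 5\}$ I would simply verify by a finite computation that $\Aut(\L[3, 5])$ has order $168$ and is isomorphic to $\PSL_2(7)$.

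\textbf{The inclusion $\Aut(\Q) \le \AGamma2L_1(\F_q | \K)$ (outside the exceptions).} Let $\theta \in \Aut(\Q)$. Composing with a translation, I may assume $\theta(0) = 0$. The first step is to show $\theta$ is additive, which I would deduce from the normality $\T_q \trianglelefteq \Aut(\Q)$: granting this, $\theta \circ \tau_c \circ \theta^{-1} \in \T_q$ for each $c$, and evaluating at $0$ forces $\theta \circ \tau_c \circ \theta^{-1} = \tau_{\theta(c)}$, i.e.\ $\theta(x + c) = \theta(x) + \theta(c)$. To establish $\T_q \trianglelefteq \Aut(\Q)$ I would recover the additive structure from $*$ combinatorially: one shows that no $2 \times 2$ subsquare of $\L[a, b]$ uses a single branch of $*$ in all four of its cells, analyses the $2 \times 2$ subsquares through a fixed cell relative to the cosets of $\R_q$, and uses that an automorphism permutes them to pin down each $\tau_c$; equivalently one can argue inside the abstract permutation group $\Aut(\Q)$ via its point stabiliser. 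With $\theta$ additive and $\theta(0) = 0$, I would substitute into $\theta(x * y) = \theta(x) * \theta(y)$: for $v \in \R_q$ and any $u$, the left-hand side equals $\theta(u) + a\theta(v)$ while the right-hand side uses the $a$- or $b$-branch according as $\theta(v) \in \R_q$ or $\theta(v) \in \N_q$; comparing yields the dichotomy that $\theta$ is either \emph{residue-preserving} ($\theta(\R_q) = \R_q$) or \emph{residue-reversing} ($\theta(\R_q) = \N_q$). In the residue-preserving case, taking $u = 0$ gives $\theta(av) = a\theta(v)$ for $v \in \R_q$ and $\theta(bw) = b\theta(w)$ for $w \in \N_q$; since $\R_q$ and $\N_q$ each additively generate $\F_q$, additivity upgrades these to $\theta \circ \lambda_a = \lambda_a \circ \theta$ and $\theta \circ \lambda_b = \lambda_b \circ \theta$, whence $\theta$ is $\K$-linear. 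A rigidity lemma --- that a $\K$-linear bijection of $\F_q$ preserving $\R_q$ setwise must have the form $\lambda_\nu \circ \phi$ with $\nu \in \R_q$ and $\phi \in \Gal(\F_q | \K)$ --- then identifies $\theta$ (after restoring the translation) as an element of $\AGamma2L_1(\F_q | \K)$. The residue-reversing case runs in parallel: the same bootstrap forces $\theta$ to be of twisted semilinear type, and matching the two branches of $*$ is possible only when $|\K|$ is a square and $b = a^\zeta$. Finally, when $a = b$ the branch split is vacuous and the argument collapses to: an additive bijection respecting the $\K$-affine operation $x * y = (1 - a)x + ay$ is exactly a $\K$-affine bijection, so $\Aut(\Q) \cong \AGL_k(\K)$.

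\textbf{Main obstacle.} The genuine difficulty is the additivity step, i.e.\ proving $\T_q \trianglelefteq \Aut(\Q)$ outside the sporadic case: because $*_{a, b}$ is ``$\K$-affine after a case split on $\chi(y - x)$'', one must disentangle the affine part from the case split without circularly assuming additivity. When $q \equiv 1 \pmod 4$ one has the clean symmetrisation $x + y = (x * y) + (y * x)$, but for $q \equiv 3 \pmod 4$ this breaks (since $-1 \in \N_q$) and a more delicate combinatorial argument through the $2 \times 2$-subsquare structure is needed. The two subsidiary obstacles are the rigidity lemma on $\K$-linear maps preserving the quadratic residues, and the need to verify the $q = 7$, $\{a, b\} = \{3, 5\}$ coincidence --- where $\Aut(\Q) \cong \PSL_2(7)$ is simple, so $\T_7$ is \emph{not} normal and the additivity step genuinely fails --- by direct computation.
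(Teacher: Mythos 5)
You should first note that this paper contains no proof of \tref{t:quadaut} at all: it is quoted verbatim from Dr\'{a}pal and Wanless~\cite{quadiso}, so the only internal point of comparison is the strategy the paper uses for the autotopism analogue \tref{t:quadatp} in \sref{s:prelim}--\sref{s:2trans}. Judged on its own terms, the easy half of your proposal is fine: translations, $\lambda_\nu$ with $\nu \in \R_q$, elements of $\Gal(\F_q|\K)$, the twisted maps when $|\K|=\zeta^2$ and $b=a^\zeta$, the $\K$-affine maps when $a=b$, and the finite check at $q=7$ all go through as you describe, giving $\AGamma2L_1(\F_q|\K) \leq \Aut(\Q)$ together with the exceptional lower bounds.

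The genuine gap is the step you yourself flag as the main obstacle, and your sketch for it does not hold up. Proving $\T_q \trianglelefteq \Aut(\Q)$ (equivalently, that automorphisms fixing $0$ are additive) outside the sporadic case is essentially the whole theorem: it is \emph{false} for $q=7$, $\{a,b\}=\{3,5\}$, so any correct argument must contain a step that breaks exactly there, and your outline has no such step. Moreover the combinatorial mechanism you propose cannot work as stated: it is true that an intercalate of $\L[a,b]$ cannot use a single branch of $*$ in all four cells, but for many valid pairs $(a,b)$ the square $\L[a,b]$ has no intercalates whatsoever (see \tref{t:n2} and \tref{t:quadinterc}), so an analysis of the $2\times 2$ subsquares through a fixed cell has nothing to act on; if instead you mean arbitrary $2\times 2$ submatrices, the ``no single branch'' claim is simply false. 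The alternative you offer (``argue inside the abstract permutation group via its point stabiliser'') is not an argument. The way this difficulty is actually overcome --- and the way this paper handles the autotopism version --- is a dichotomy on the permutation group: if it is not $2$-transitive one shows differences have their quadratic character preserved and applies Carlitz's theorem to land in $\AGL_d(p)$ (cf.\ \lref{l:not2trans}), while if it is $2$-transitive one invokes the classification of $2$-transitive groups containing a regular elementary abelian subgroup (cf.\ \tref{t:2transab}), which is precisely where the degree-$7$ group $\PSL_2(7)$ emerges as the unique survivor. Two smaller issues: your residue-preserving/reversing dichotomy is asserted rather than proved --- additivity only gives, pointwise, $\theta(av)\in\{a\theta(v), b\theta(v)\}$ according to $\chi(\theta(v))$, and globalising this needs an argument (e.g.\ using $v_1,v_2,v_1+v_2\in\R_q$ with $a\neq b$ plus a connectivity or counting step) --- and the rigidity lemma you invoke ($\K$-linear and $\R_q$-preserving implies $\lambda_\nu\circ\phi$ with $\phi\in\Gal(\F_q|\K)$) should be derived explicitly, e.g.\ from Carlitz's theorem, rather than left as a black box. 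Granting additivity, the rest of your semilinearity analysis, including the identification of the twisted case via an involution of $\K$ swapping $a$ and $b$, is plausible.
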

	
	We prove the analogous result to \tref{t:quadiso} for isotopisms and to \tref{t:quadaut} for autotopisms.
	
	\begin{thm}\label{t:quadisotop}
		Quadratic quasigroups $\Q_{a, b}$ and $\Q_{a', b'}$ of order $q$ are isotopic if and only if one of the following holds:
		\begin{itemize}
			\item There exists some $\theta \in \Aut(\F_q)$ such that $\{a, b\} = \{\theta(a'), \theta(b')\}$,
			\item $a=b$ and $a'=b'$.
		\end{itemize}	
	\end{thm}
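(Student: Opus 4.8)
The plan is as follows. For the forward (``if'') direction, the first case is immediate: if $\{a,b\}=\{\theta(a'),\theta(b')\}$ for some $\theta\in\Aut(\F_q)$, then $\Q_{a,b}\cong\Q_{a',b'}$ by \tref{t:quadiso}, and isomorphic quasigroups are isotopic. For the second case, observe that when $a=b$ the operation is $x*_{a,a}y=(1-a)x+ay$, where $a\ne 0$ and $1-a\ne 0$ because $\{a^2,(a-1)^2\}\subseteq\R_q$; hence $(\lambda_{1-a},\lambda_a,\Id)$ is an isotopism from $(\F_q,+)$ to $\Q_{a,a}$, so every $\Q_{a,a}$ is isotopic to $(\F_q,+)$ and therefore all of them are isotopic to one another.

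For the converse, suppose $(\alpha,\beta,\gamma)$ is an isotopism from $\Q_{a,b}$ to $\Q_{a',b'}$. The first step is to separate the degenerate case $a=b$ from the case $a\ne b$ using the number of $2\times 2$ subsquares, which is an isotopy invariant. When $a=b$ the Latin square $\L[a,a]$ is the Cayley table of the elementary abelian group of order $q$, which, having odd order, contains no $2\times 2$ subsquare; on the other hand, the count of $2\times 2$ subsquares of $\L[a,b]$ — computed separately in this paper, as promised in the abstract — is positive whenever $a\ne b$. Consequently, either $a=b$ and $a'=b'$, and we are in the second case of the theorem, or else $a\ne b$ and $a'\ne b'$, which I assume from now on.

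It remains to show that in the non-degenerate case an isotopism forces an isomorphism, after which \tref{t:quadiso} finishes the proof. My plan here is to pass to loop isotopes. Since $\T_q\le\Aut(\Q_{a,b})$ acts regularly on $\F_q$, all principal loop isotopes of $\Q_{a,b}$ are isomorphic to a single loop $\mathbf L_{a,b}$, namely the principal isotope at $0$, which one writes down explicitly from the maps $z\mapsto z*0$ and $z\mapsto 0*z$, using $ab\in\R_q$ and $(a-1)(b-1)\in\R_q$ to see that these maps are bijections; similarly for $\Q_{a',b'}$. Since a loop isotopic to a quasigroup is isomorphic to a principal loop isotope of that quasigroup, $\Q_{a,b}\sim\Q_{a',b'}$ gives $\mathbf L_{a,b}\cong\mathbf L_{a',b'}$. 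The crux is then to show that the isomorphism type of $\mathbf L_{a,b}$ determines the unordered pair $\{a,b\}$ up to $\Aut(\F_q)$: I would do this by exhibiting inside $\mathbf L_{a,b}$ a recoverable copy of the additive group of $\F_q$ together with the residue partition (for instance via its translation-like automorphisms, or via the positions of its $2\times 2$ subsquares), so that $\Aut(\mathbf L_{a,b})$ — controlled by \tref{t:quadaut} together with the autotopism group of $\Q_{a,b}$, which is also determined in this paper — pins down $\F_q$ and $\{a,b\}$, whence $\{a,b\}=\{\theta(a'),\theta(b')\}$ for some $\theta\in\Aut(\F_q)$. (An alternative to the loop‑isotope reduction is to work directly with $\Atp(\Q_{a,b})$: any isotopism conjugates $\Atp(\Q_{a,b})$ onto $\Atp(\Q_{a',b'})$, and if one isolates a characteristic subgroup such as the diagonal translations $\{(\tau_c,\tau_c,\tau_c)\}$, this constrains $(\alpha,\beta,\gamma)$ enough to read off the relation between $a,b$ and $a',b'$.)

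The main obstacle is precisely this last step — translating ``$\mathbf L_{a,b}\cong\mathbf L_{a',b'}$'' back into a statement about $\{a,b\}$ and $\{a',b'\}$ — and here one must be attentive to the exceptional cases in \tref{t:quadaut} (the fields with $|\K|=\zeta^2$ and $b=a^\zeta$, and $q=7$ with $\{a,b\}=\{3,5\}$), checking that none of them produces an unexpected isotopism not accounted for by \tref{t:quadiso}. The other step that deserves care, and which is the combinatorial heart of the paper, is the claimed positivity of the $2\times 2$-subsquare count for $a\ne b$: since an individual intercalate arises only for special branch patterns subject to quadratic-character constraints, proving that at least one admissible configuration always exists is itself a genuine computation.
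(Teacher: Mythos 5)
Your separation of the degenerate case rests on a false claim. You assert that the number of $2\times 2$ subsquares of $\L[a,b]$ is positive whenever $a\neq b$, but \tref{t:n2} and \tref{t:quadinterc} say the opposite: when $b\notin\{-a,2-a,a/(2a-1)\}$ and condition \eref{e:n2cond} fails, $\L[a,b]$ has \emph{no} intercalates, and there are $7q^2/32+O(q^{3/2})$ such $N_2$ quadratic Latin squares, almost all with $a\neq b$. So the intercalate count cannot distinguish $\Q_{a,a}$ from a generic $\Q_{a',b'}$ with $a'\neq b'$, and your argument does not rule out a mixed isotopism. The paper rules it out differently: $\Q_{a,a}$ is isotopic to the group $(\F_q,+)$, while by~\cite[Theorem $1.3$]{quadiso} a quadratic quasigroup with $a'\neq b'$ is not isotopic to \emph{any} group; hence an isotopism forces either $a=b$ and $a'=b'$, or $a\neq b$ and $a'\neq b'$.

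In the non-degenerate case your proposal stops short of a proof exactly at the crux. The reduction to principal loop isotopes already has a gap (regularity of $\T_q\leq\Aut(\Q_{a,b})$ only identifies principal loop isotopes within a $\T_q$-orbit of pairs $(u,v)$, so you get ``isomorphic to \emph{some} principal loop isotope'', not a single canonical loop $\mathbf{L}_{a,b}$), and the key step --- recovering $\{a,b\}$ up to $\Aut(\F_q)$ from that loop --- is only a plan (``exhibit a recoverable copy of the additive group and the residue partition''), not an argument. The paper's proof is short and does not need any of this: by \tref{t:quadatp} (whose $a\neq b$ case is the technical heart of the paper, proved via \lref{l:affatp}, \lref{l:not2trans}, and \lref{l:2trans}), every autotopism of $\Q_{a',b'}$... rather, of $\Q_{a,b}$, is an automorphism; so if $(\alpha,\beta,\gamma)$ is an isotopism from $\Q_{a,b}$ to $\Q_{a',b'}$, then conjugating any $\theta\in\Aut(\Q_{a',b'})$ yields an autotopism $(\alpha^{-1}\circ\theta\circ\alpha,\beta^{-1}\circ\theta\circ\beta,\gamma^{-1}\circ\theta\circ\gamma)$ of $\Q_{a,b}$ whose components must coincide, whence $\alpha\circ\beta^{-1}$ centralises $\Aut(\Q_{a',b'})$; since that group contains $\T_q$ (forcing $\alpha\circ\beta^{-1}\in\T_q$) and some $\lambda_c$ with $c\in\R_q\setminus\{1\}$, we get $\alpha=\beta$, similarly $\beta=\gamma$, so the isotopism is an isomorphism and \tref{t:quadiso} finishes. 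Your parenthetical alternative (conjugating $\Atp$ and isolating the translations) gestures at this, but ``this constrains $(\alpha,\beta,\gamma)$ enough to read off the relation'' is not carried out; note also that the exceptional cases of \tref{t:quadaut} need no separate treatment, since the argument only uses that $\Aut(\Q_{a',b'})$ contains $\T_q$ and the maps $\lambda_c$ with $c\in\R_q$.
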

	
	\begin{thm}\label{t:quadatp}
		Let $\Q = \Q_{a, b}$ be a quadratic quasigroup of order $q$. 
		\begin{itemize}
			\item If $a=b$, then $\Atp(\Q)$ is isomorphic to the semi-direct product $\T_q^2 \rtimes \GL_d(p)$ with multiplication defined by $((\tau_u, \tau_v), \theta) \cdot ((\tau_{w}, \tau_{z}), \phi) = ((\tau_{\phi^{-1}(u) + w}, \tau_{\phi^{-1}(v)+z}), \theta \circ \phi)$. Every autotopism of $\Q$ is of the form $(\lambda_{1-a}^{-1} \circ \theta \circ \tau_u \circ \lambda_{1-a}, \lambda_{a}^{-1} \circ \theta \circ \tau_v \circ \lambda_a, \theta \circ \tau_{u+v})$ for some $\theta \in \GL_d(p)$ and $\{u, v\} \subseteq \F_q$.
			\item If $a \neq b$, then every autotopism of $\Q$ is an automorphism and so $\Atp(\Q) \cong \Aut(\Q)$.
		\end{itemize}
	\end{thm}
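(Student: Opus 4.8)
The plan is to reduce the determination of $\Atp(\Q)$ to the already-known computation of $\Aut(\Q)$ in \tref{t:quadaut}, by exploiting the structure of $2\times2$ subsquares (intercalates) of the quadratic Latin square $\L[a,b]$. The key principle is that any autotopism $(\alpha,\beta,\gamma)$ of $\Q$ permutes the set of intercalates of $\L[a,b]$, so invariants of that set constrain the triple. First I would handle the case $a=b$ separately, since there $*_{a,b}$ is simply the affine operation $x*y = x + a(y-x) = (1-a)x + ay$ on the $\F_p$-vector space $\F_q$; this is a \emph{linear} (T-)quasigroup, and its autotopisms are completely classical. Writing any autotopism in the form forced by linearity, one checks directly that $(\alpha,\beta,\gamma)$ must satisfy $\alpha = \lambda_{1-a}^{-1}\circ\theta\circ\tau_u\circ\lambda_{1-a}$, $\beta = \lambda_a^{-1}\circ\theta\circ\tau_v\circ\lambda_a$, $\gamma = \theta\circ\tau_{u+v}$ for some additive $\theta\in\GL_d(p)$ and $u,v\in\F_q$: the functional equation $\alpha(x)*\beta(y)=\gamma(x*y)$ becomes, after conjugating by the relevant $\lambda$'s, an identity of additive-plus-constant maps, and comparing the linear parts gives a single additive $\theta$ while the translation parts are free. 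The semidirect product description with the stated multiplication rule then follows by composing two such triples and tracking how $\phi^{-1}$ appears when one slides a translation past a linear map.

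For the case $a\neq b$, the heart of the argument is that every autotopism is in fact an \emph{isomorphism}, i.e. $\alpha=\beta=\gamma$. Here I would use the intercalate count. An autotopism $(\alpha,\beta,\gamma)$ of $\Q$ induces an isotopism from $\Q$ to $\Q$; composing with the inverse of a suitable ``principal isotope'' identification, one obtains that $\Q$ is isomorphic to one of its principal isotopes $\Q_{(e,f)}$ (the quasigroup with operation $x\diamond y = \rho^{-1}(x)*\sigma^{-1}(y)$ for appropriate translations fixing chosen elements $e,f$). Since isomorphic quasigroups have quadratic Latin squares with the same number of intercalates, I would invoke the intercalate count for quadratic Latin squares (established elsewhere in the paper, as advertised in the abstract) to show that when $a\neq b$ the principal isotopes $\Q_{(e,f)}$ with $(e,f)\neq(0,0)$ are \emph{not} quadratic in the same way — more precisely, their intercalate counts differ from that of $\Q$ unless the isotopism was already ``principal-trivial''. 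This forces $\alpha,\beta$ to differ from the identity only by the same translation, and then a short direct computation with the defining functional equation $\alpha(x)*\beta(y)=\gamma(x*y)$ — evaluating at $y=x$ and using the diagonal behaviour $x*x=x$ — pins down $\alpha=\beta=\gamma=\theta$ for a single $\theta$, which must then lie in $\Aut(\Q)$ by definition. Hence $\Atp(\Q)\cong\Aut(\Q)$, described explicitly by \tref{t:quadaut}.

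The main obstacle I anticipate is the second bullet: ruling out ``genuinely non-principal'' autotopisms when $a\neq b$. The subtlety is that a priori $\alpha,\beta,\gamma$ could be wild permutations bearing no obvious relation to field structure, so one cannot immediately apply \tref{t:quadaut}. The intercalate-count invariant is the natural tool, but making it decisive requires knowing the count precisely enough to distinguish $\L[a,b]$ from all its non-trivial principal isotopes, including near-degenerate cases (small $q$, or $\{a,b\}$ related by $b=a^\zeta$, or the sporadic $q=7$, $\{a,b\}=\{3,5\}$ case with its large automorphism group $\PSL_2(7)$). I would treat these exceptional configurations by hand, checking directly that even there no new autotopisms appear beyond automorphisms. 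A secondary technical point, in the $a=b$ case, is verifying associativity of the claimed semidirect-product multiplication and that the map sending $((\tau_u,\tau_v),\theta)$ to the displayed triple is a well-defined group isomorphism onto $\Atp(\Q)$ — this is routine but must be done carefully because of the conjugations by $\lambda_{1-a}$ and $\lambda_a$ and the appearance of $\phi^{-1}$ in the cocycle.
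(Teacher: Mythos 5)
Your treatment of the case $a=b$ is essentially the paper's: conjugating by $(\lambda_{1-a},\lambda_a,\Id)$ reduces to the abelian group $(\F_q,+)$, whose autotopism group is classical, and the semidirect-product bookkeeping is routine; that part is fine. The case $a\neq b$, however, rests on a tool that cannot work. The number of intercalates of a Latin square is an isotopism invariant (indeed a species invariant, which is exactly why the paper can speak of species containing $N_2$ quadratic squares): any isotopism carries $2\times 2$ subsquares bijectively to $2\times 2$ subsquares, so every principal isotope of $\L[a,b]$ has precisely the same intercalate count as $\L[a,b]$. Consequently the intercalate count can never ``distinguish $\L[a,b]$ from its non-trivial principal isotopes,'' and the step where you force the autotopism to be principal-trivial collapses. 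There is also a logical slip upstream: an autotopism is an isotopism from $\Q$ to itself, so the classical fact that isotopic quasigroups are isomorphic to principal isotopes of each other yields nothing here; the existence of an autotopism does not hand you an isomorphism from $\Q$ onto a nontrivial principal isotope that you could then refute. Finally, your closing step (set $y=x$ and use $x*x=x$) only gives $\alpha(x)*\beta(x)=\gamma(x)$, which does not pin down $\alpha=\beta=\gamma$.

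What is actually needed, and what the paper does, is a constraint on the components themselves. One proves that each projection $\Atp_i(\Q)$ is contained in $\AGL_d(p)$: when $\Atp_i(\Q)$ is not $2$-transitive this follows from Carlitz's theorem on permutations preserving the quadratic character of differences; when it is $2$-transitive one invokes the classification of $2$-transitive groups containing an elementary abelian regular subgroup (here $\T_q$), excluding $\Sym$ and $\Alt$ by an order bound on autotopism groups, excluding $\PGammaL_u(v)$ with $v\neq 2$ by an element-order argument, and excluding $\PGL_u(2)$ by showing that an involution in $\Atp_1(\Q)$ with $(q+1)/4$ transpositions would force a row permutation of $\L$ to contain about $q/4$ transpositions, contradicting the bound of at most two transpositions per row permutation when $q\equiv 3\bmod 4$. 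This is the only place the intercalate/transposition count enters, and it enters as a structural bound on row permutations, not as an invariant separating isotopes. A separate lemma, proved with Weil-bound estimates, then shows that an autotopism all of whose components lie in $\AGL_d(p)$ must satisfy $\alpha=\beta=\gamma$. Your proposal contains none of this machinery, and the exceptional configurations you plan to check by hand ($q=7$, $b=a^\zeta$) are not where the difficulty lies; the genuine obstacle is ruling out wild components, which your invariant cannot do.
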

	
	Let $\Q = \Q_{a, b}$ and $\Q' = \Q_{a', b'}$ be quadratic quasigroups with $a \neq b$ and $a' \neq b'$. \tref{t:quadisotop} tells us that $\Q$ and $\Q'$ are isotopic if and only if they are isomorphic. This statement was predicted in~\cite{nu4} in the case where $\Q$ and $\Q'$ are of prime order. 
	
	\tref{t:quadisotop} and \tref{t:quadatp} are easy to verify in the case where $a=b$. Let $\Q = \Q_{a, a}$ and $\Q' = \Q_{a', b'}$ be quadratic quasigroups of order $q$. From~\cite[Theorem $1.3$]{quadiso}, we know that $\Q$ is isotopic to the group $(\F_q, +)$ and if $a' \neq b'$, then $\Q'$ is not isotopic to any group. Therefore, $\Q$ and $\Q'$ are isotopic if and only if $a'=b'$. This proves \tref{t:quadisotop} in the case where $a=b$. The autotopism group of an abelian group $(G, +)$ is isomorphic to the semidirect product $G^2 \rtimes \Aut(G)$ with multiplication defined by $((g_1, g_2), \theta) \cdot ((h_1, h_2), \phi) = ((\phi^{-1}(g_1)+h_1, \phi^{-1}(g_2)+h_2), \theta \circ \phi)$. Furthermore, every autotopism of $G$ is of the form $(\alpha, \beta, \gamma)$ where $\alpha(x) = \theta(x+u)$, $\beta(x) = \theta(x+v)$, and $\gamma(x) = \theta(x+u+v)$ for some $\{u, v\} \subseteq G$ and $\theta \in \Aut(G)$~\cite{grpatp}. The claim of \tref{t:quadatp} in the case where $a=b$ now follows, since $(\lambda_{1-a}, \lambda_a, \Id)$ is an isotopism from $\Q$ to $(\F_q, +)$ and the automorphism group of $(\F_q, +)$ is $\GL_d(p)$. It remains to prove \tref{t:quadisotop} and \tref{t:quadatp} in the case where $a \neq b$.
	
	\medskip
	
	In order to prove \tref{t:quadisotop} and \tref{t:quadatp}, we will need to use some results about the structure of quadratic Latin squares. Let $L$ be a Latin square with symbol set $S$. An \emph{intercalate} of $L$ is a subset $\{r_1, r_2\} \subseteq S$ consisting of distinct rows and a subset $\{c_1, c_2\} \subseteq S$ consisting of distinct columns such that $L_{r_1, c_1} = L_{r_2, c_2}$ and $L_{r_1, c_2} = L_{r_2, c_1}$. 
	There has been lots of work done counting intercalates in Latin squares~\cite{cycstrucrandom, interceverywhere, maxinterc, N2rare, substructureLS, KS, manysubsq, uniqueinterc}. Particular focus has been given to Latin squares that have no intercalates. Such squares are called $N_2$. Asymptotically almost all Latin squares of order $n$ contain $\Theta(n^2)$ intercalates and so $N_2$ Latin square are rare~\cite{N2rare, substructureLS}. In~\cite{quadcycs}, it was determined exactly when a quadratic Latin square is $N_2$.
	
	\begin{thm}\label{t:n2}
		Let $\mathcal{L} = \mathcal{L}[a, b]$ be a quadratic Latin square of order $q$. If $b \in \{-a, 2-a, a/(2a-1)\}$, then $q \equiv 1 \bmod 4$ and $\L$ contains an intercalate. If $b \notin \{-a, 2-a, a/(2a-1)\}$, then $\L$ contains an intercalate if and only if
		\begin{equation}\label{e:n2cond}
				(2ab-a-b)(a+b)(a-1) \in \R_q \text{ and } \{2(a+b-2)(a-1), 2a(a+b)\} \subseteq \N_q.
		\end{equation}
	\end{thm}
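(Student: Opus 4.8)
First, a reduction. If $a=b$ then $\L[a,b]$ is the linear Latin square $x*y=(1-a)x+ay$, and subtracting the two equations defining an intercalate forces the two columns to coincide; so $\L[a,b]$ has no intercalate, consistent with \eqref{e:n2cond} failing (it would require $2(a+b-2)(a-1)=4(a-1)^2\in\N_q$, which is false). Hence assume $a\neq b$. Directly from the definition of $*_{a,b}$, every translation $x\mapsto x+\mu$ and every scaling $x\mapsto\nu x$ with $\nu\in\R_q$ is an automorphism of $\Q_{a,b}$ (each sends a difference $y-x$ into the same one of $\R_q,\N_q$); since automorphisms permute intercalates, I may assume an intercalate uses rows $\{0,r\}$ with $r\neq 0$, and then only $\eta:=\chi(r)\in\{\pm1\}$ (where $\chi$ is the quadratic character) will matter, with both values of $\eta$ realizable. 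Using $x*y=(1-\sigma)x+\sigma y$ for $x\neq y$, where $\sigma\in\{a,b\}$ records the class of $y-x$, such an intercalate on columns $\{c_1,c_2\}$ is a solution $c_1\neq c_2$ of
\[
\sigma_{11}c_1=(1-\sigma_{22})r+\sigma_{22}c_2,\qquad \sigma_{12}c_2=(1-\sigma_{21})r+\sigma_{21}c_1,
\]
where $\sigma_{ij}\in\{a,b\}$ must be consistent with the class of the corner difference $d_{11}=c_1$, $d_{12}=c_2$, $d_{21}=c_1-r$, $d_{22}=c_2-r$. At most one $d_{ij}$ can vanish (two vanishing differences would put two corners on the diagonal and force a row or column to repeat), so there are two regimes: the generic one, with all $d_{ij}\neq 0$ and a sign pattern $(\sigma_{11},\sigma_{12},\sigma_{21},\sigma_{22})\in\{a,b\}^4$, and the degenerate one with exactly one $d_{ij}=0$, which after swapping rows and/or columns I take to be $d_{11}$ (so $c_1=0$).

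In the generic regime, eliminating $c_1$ gives $(\sigma_{11}\sigma_{12}-\sigma_{21}\sigma_{22})c_2=(\sigma_{11}+\sigma_{21}-\sigma_{11}\sigma_{21}-\sigma_{21}\sigma_{22})r$. If the coefficient of $c_2$ is nonzero, then $c_2=\gamma_2 r$ and $c_1=\gamma_1 r$ for explicit rational functions $\gamma_1,\gamma_2$ of $a,b$, so $d_{21}=(\gamma_1-1)r$ and $d_{22}=(\gamma_2-1)r$, and by multiplicativity of $\chi$ the consistency requirement becomes: for some $\eta\in\{\pm1\}$,
\[
\chi(\gamma_1)\,s_{11}=\chi(\gamma_2)\,s_{12}=\chi(\gamma_1-1)\,s_{21}=\chi(\gamma_2-1)\,s_{22}=\eta,
\]
where $s_{ij}=+1$ if $\sigma_{ij}=a$ and $s_{ij}=-1$ if $\sigma_{ij}=b$. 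Running through the $16$ patterns (using the row-swap and column-swap symmetries to cut down the work): the two monochrome patterns give $0=2a(1-a)r$ or $0=2b(1-b)r$, which is impossible; among the patterns using each of $a,b$ twice, one orbit forces $c_1=c_2$ after elimination, while the other two have $\sigma_{11}\sigma_{12}=\sigma_{21}\sigma_{22}$ and then require $b\in\{2-a,\,a/(2a-1)\}$; and for each of the eight patterns in which one of $a,b$ occurs exactly once, the coefficient of $c_2$ is nonzero (as $a\neq b$), the values $\gamma_1,\gamma_2,\gamma_1-1,\gamma_2-1$ are automatically nonzero unless $b\in\{-a,2-a,a/(2a-1)\}$, and a direct computation — together with the symmetry $a\leftrightarrow b$ and the relations $\chi(a)=\chi(b)$, $\chi(a-1)=\chi(b-1)$ that follow from $\{ab,(a-1)(b-1)\}\subseteq\R_q$ — shows that the displayed condition holds for some $\eta$ exactly when \eqref{e:n2cond} holds.

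In the degenerate regime $c_1=0$, the two equations give $c_2=(\sigma_{22}-1)r/\sigma_{22}$ and then $\sigma_{12}(\sigma_{22}-1)=\sigma_{22}(1-\sigma_{21})$; checking the eight choices of $(\sigma_{12},\sigma_{21},\sigma_{22})\in\{a,b\}^3$ shows that each solution forces $b\in\{-a,2-a,a/(2a-1)\}$. Hence, if $b\notin\{-a,2-a,a/(2a-1)\}$, the only intercalates arise from the eight generic patterns, and $\L[a,b]$ has an intercalate if and only if \eqref{e:n2cond} holds; this is the second sentence of the theorem. Now suppose $b\in\{-a,2-a,a/(2a-1)\}$. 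Each case makes $ab$ or $(a-1)(b-1)$ a negative square, or makes $ab$ and $(a-1)(b-1)$ equal to a square times $(2a-1)^{-1}$ and $-(2a-1)^{-1}$ respectively (e.g.\ $b=-a$ gives $ab=-a^2$; $b=2-a$ gives $(a-1)(b-1)=-(a-1)^2$; $b=a/(2a-1)$ gives $ab=a^2(2a-1)^{-1}$ and $(a-1)(b-1)=-(a-1)^2(2a-1)^{-1}$), so $\{ab,(a-1)(b-1)\}\subseteq\R_q$ forces $-1\in\R_q$, hence $q\equiv1\bmod 4$. It remains to exhibit an intercalate. In the relevant balanced generic pattern the coefficient of $c_2$ and the right-hand side of the elimination both vanish, so $c_2$ and $\eta$ are free, and an intercalate exists precisely when some choice makes the four corner differences land in their prescribed classes. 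Each condition is a single quadratic-character constraint on the affine map $c_2\mapsto(c_1,c_2)$, so the number of admissible $c_2$ is $q/16+O(\sqrt q)$ by a standard Jacobi/Weil character-sum estimate; this is positive for all large $q$, and the finitely many remaining admissible triples $(a,b,q)$ are checked directly.

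The main obstacles are twofold. Most of the labour is the bookkeeping in the generic regime: one must verify that the disjunction, over the eight relevant patterns, of the character conditions above simplifies to exactly the conjunction \eqref{e:n2cond}, which demands careful sign tracking and repeated use of the validity relations to rewrite characters of products (crucially, none of these simplifications may secretly assume $-1\in\R_q$, since in the second sentence $q$ may be $3\bmod 4$). The more delicate point is the existence step in the exceptional cases: one needs the character-sum bound explicit enough, together with a clean list of the small exceptions, to guarantee an intercalate for every admissible $(a,b,q)$ with $b\in\{-a,2-a,a/(2a-1)\}$.
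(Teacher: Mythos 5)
Your proposal is correct in substance, and it is worth noting that this paper does not actually prove the statement: Theorem~\ref{t:n2} is imported from~\cite{quadcycs}, and the method there (and in \sref{s:interc} of this paper, which refines it) is the same one you use — classify potential $2\times2$ subsquares by the quadratic classes of the four corner differences, turn each pattern into character conditions on explicit rational functions of $a,b$, and handle the one-parameter (exceptional) patterns with Weil-type bounds. Your presentation via the corner equations on rows $\{0,r\}$, reduced by the translation and square-scaling automorphisms, is a cosmetic variant of the paper's row-permutation/``satisfies $z$'' formalism; your degenerate-cell analysis and the observation that the monochrome and $(a,a,b,b)$-type patterns cannot yield intercalates when $b\notin\{-a,2-a,a/(2a-1)\}$ are both right, and the asserted reduction of the eight three-of-a-kind patterns to \eref{e:n2cond} does check out (e.g.\ for $(b,a,a,a)$ one gets exactly $\chi(2(a-1)(a+b-2))=\chi(2a(a+b))=-1$ together with $\chi(2a(1-a)(a+b-2ab))=-1$, which combine to \eref{e:n2cond}; the three-$b$ patterns reduce to the same conditions via $\chi(a)=\chi(b)$ and $\chi(a-1)=\chi(b-1)$, with the factors $\chi(-1)$ cancelling, so no parity of $q$ is smuggled in).

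The one place where your write-up falls short of a complete proof is the existence claim in the exceptional cases. The Weil estimate gives roughly $q/16$ admissible choices of $c_2$ only with an error of order $11q^{1/2}$ plus a constant (compare \lref{l:t22-aa/(2a-1)}), so positivity is only guaranteed for $q$ larger than roughly $190$; for every smaller $q\equiv1\bmod4$ and every valid pair with $b\in\{-a,2-a,a/(2a-1)\}$ an intercalate must still be exhibited, and ``checked directly'' is doing real work there (note that \tref{t:quadinterc}'s lower bound is likewise vacuous for small $q$, so it cannot be used as a substitute). That finite verification, together with actually writing out the eight-pattern bookkeeping you only assert, is what separates your sketch from a full proof; neither requires a new idea.
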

	
	We generalise \tref{t:n2} by counting the number of intercalates in quadratic Latin squares.
	
	\begin{thm}\label{t:quadinterc}
		Let $\mathcal{L} = \mathcal{L}[a, b]$ be a quadratic Latin square of order $q$ and let $N$ be the number of intercalates that $\L$ contains. 
		\begin{itemize}
			\item If $b \not\in \{-a, 2-a, a/(2a-1)\}$ and \eref{e:n2cond} is not satisfied, then $N=0$.
			\item If $b \not\in \{-a, 2-a, a/(2a-1)\}$ and \eref{e:n2cond} is satisfied, then $N = q(q-1)$.
			\item If $b \in \{-a, 2-a, a/(2a-1)\}$, then
			\[
			\frac{q(q-1)(q-11q^{1/2}-38)}{32} \leq N \leq \frac{q(q-1)(q+11q^{1/2}+70)}{32}.
			\]
		\end{itemize}
	\end{thm}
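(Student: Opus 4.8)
The plan is to count intercalates directly: reduce the count to a point-counting problem over $\F_q$ governed by the Weil bound, and then split into the three cases according to the structure of that count.

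First I would reduce. Identifying the rows, columns, and symbols of $\L$ with $\F_q$, an intercalate of $\L$ is a pair of rows $\{r_1,r_2\}$ and a pair of columns $\{c_1,c_2\}$ with $r_1\ne r_2$, $r_1*c_1=r_2*c_2$, and $r_1*c_2=r_2*c_1$ (the condition $c_1\ne c_2$ is then automatic, since $c_1=c_2$ with $r_1\ne r_2$ would violate the first equation), and each intercalate arises from exactly four ordered quadruples $(r_1,r_2,c_1,c_2)$. Since every translation $\tau_u$ is an automorphism of $\Q_{a,b}$ acting simultaneously on rows, columns, and symbols, translations act freely on the set of such quadruples, so we may normalise $r_1=0$ and obtain
\[
N=\frac{q}{4}\,M,\qquad M=\bigl|\{(r,c_1,c_2)\in\F_q^*\times\F_q^2:\ 0*c_1=r*c_2,\ 0*c_2=r*c_1\}\bigr|.
\]
Using $0*c=\phi(c)\,c$ and $r*c=r+\phi(c-r)(c-r)$, where $\phi(x)\in\{a,b\}$ according to the quadratic character $\chi(x)$, the two defining equations become piecewise linear in $(r,c_1,c_2)$, with pieces indexed by the sign pattern $\epsilon=(\epsilon_1,\epsilon_2,\epsilon_3,\epsilon_4)\in\{\pm1\}^4$ of $(\chi(c_1),\chi(c_2),\chi(c_1-r),\chi(c_2-r))$. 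The $O(q)$ triples for which one of $c_1,c_2,c_1-r,c_2-r$ vanishes are handled separately.

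Next I would analyse a fixed pattern $\epsilon$, setting $\phi_1,\phi_2,\phi_3,\phi_4\in\{a,b\}$ accordingly. Since $(a-1)(b-1)\in\R_q$ forces $a,b\ne1$, the first equation expresses $r$ linearly in $(c_1,c_2)$, and substituting into the second leaves a single homogeneous linear relation $A_\epsilon c_1=B_\epsilon c_2$ with $A_\epsilon,B_\epsilon$ depending only on $a,b,\epsilon$. If $(A_\epsilon,B_\epsilon)\ne(0,0)$, the admissible $(c_1,c_2)$ lie on a line through the origin, and after imposing the four sign conditions this pattern contributes either $0$ or $(q-1)/2$ to $M$. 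If $(A_\epsilon,B_\epsilon)=(0,0)$, then $(c_1,c_2)$ is free with $r$ determined, the four sign conditions read $\chi(\ell_i(c_1,c_2))=\epsilon_i$ for four linear forms $\ell_i$ that a short computation (using $a,b\notin\{0,1\}$ and $a\ne b$) shows to be pairwise non-proportional, and expanding
\[
\frac{1}{16}\sum_{c_1,c_2\in\F_q}\ \prod_{i=1}^{4}\bigl(1+\epsilon_i\chi(\ell_i(c_1,c_2))\bigr)
\]
and bounding the resulting one- and two-variable character sums by the Weil bound shows this pattern contributes $q^2/16+O(q^{3/2})$. The equations $A_\epsilon=B_\epsilon=0$ force $\phi_1\phi_2=\phi_3\phi_4$, and a direct case-check shows they are solvable for some $\epsilon$ exactly when $b\in\{-a,2-a,a/(2a-1)\}$, and that in each of these three cases precisely two of the sixteen patterns are of this degenerate type.

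Finally I would assemble the cases. The first bullet is immediate from \tref{t:n2}. In the second case no degenerate pattern occurs, so $M$ is a sum of sixteen line-contributions, each $0$ or $(q-1)/2$; determining when the sign conditions along a line are simultaneously satisfiable is a finite computation with Legendre symbols whose outcome is exactly condition \eref{e:n2cond}, and when \eref{e:n2cond} holds one finds that exactly eight patterns contribute, each $(q-1)/2$, with the surviving lines avoiding the degenerate locus, so that $M=4(q-1)$ and $N=q(q-1)$; when \eref{e:n2cond} fails no pattern contributes, and $N=0$, in agreement with \tref{t:n2}. In the third case, $M=2\bigl(q^2/16+O(q^{3/2})\bigr)+O(q)=q^2/8+O(q^{3/2})$, hence $N=q^3/32+O(q^{5/2})$; the explicit constants $11$, $38$, $70$ come from evaluating every character sum that appears in the two degenerate expansions via the Weil bound — separating the terms whose polynomial is a constant times a square, which feed the main term, from the genuine error terms — together with the exact contributions of the fourteen remaining patterns and of the degenerate locus. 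The step I expect to be the main obstacle is precisely this last bookkeeping in the third case: one must enumerate all the character sums produced by the two degenerate patterns, apply the Weil bound to each with the correct degree and square-free part, and pin the lower-order contributions down sharply enough to land inside the stated interval; a lesser difficulty is confirming in the second case that the line-by-line sign analysis collapses exactly to \eref{e:n2cond} and that the eight surviving lines never meet the degenerate locus, so that the count is exactly $q(q-1)$ rather than $q(q-1)+O(q)$.
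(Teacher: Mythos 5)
Your plan is correct in outline but takes a genuinely different route from the paper. The paper never parametrises intercalates directly: it exploits the isomorphism structure (\lref{l:quadrowperms}, together with $\lambda_c:\L[a,b]\to\L[b,a]$ for $c\in\N_q$) to reduce the whole count to transpositions of the single row permutation $r_{0,1}$, classifies those transpositions into Type One and Type Two using the machinery of~\cite{quadcycs}, and estimates them with one-variable character sums (\lref{l:t1}, \lref{l:t22-aa/(2a-1)}, \lref{l:o0oa}, \lref{l:transcount}), finally multiplying by the number of row pairs. You quotient only by translations, split into the sixteen sign patterns, and observe that each pattern is either a line (contributing $0$ or $(q-1)/2$) or, precisely when $b\in\{-a,2-a,a/(2a-1)\}$, one of two free patterns counted by a two-variable Weil estimate; your identification of the degenerate patterns (two per exceptional value of $b$, with $\phi_1\phi_2=\phi_3\phi_4$) checks out. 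This is more self-contained (no Type One/Type Two apparatus, no split on $q\bmod 4$), and your two-variable error terms are actually smaller than the paper's per-row one-variable ones: a product of $k\le4$ pairwise non-proportional binary linear forms contributes exactly $0$ for $k\le2$, $O(q)$ for $k=3$, and at most $2(q-1)q^{1/2}$ for $k=4$, so with explicit bookkeeping the stated constants $11,38,70$ are within reach (small $q$ may need a direct check). What you lose is the per-row-pair information (exactly $0$ or $2$ transpositions in each row permutation), which the paper reuses later in the proof of \lref{l:atpord2}.

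One step is misdiagnosed and must be repaired for the exact value $N=q(q-1)$. The worry is not whether ``the eight surviving lines meet the degenerate locus'': the conditions $\chi=\pm1$ already exclude points where $c_1$, $c_2$, $c_1-r$ or $c_2-r$ vanishes. The real issue is that solutions lying \emph{on} the degenerate locus (intercalates on rows $\{0,r\}$ whose columns include $0$ or $r$) are invisible to all sixteen patterns and would push $M$ above $4(q-1)$; you must show none exist when $b\notin\{-a,2-a,a/(2a-1)\}$. This is the analogue of the paper's \lref{l:o0oa}, and it is true and provable by your own elimination: with $c_1=0$, writing $\varphi$ as multiplication by $\phi_2,\phi_3,\phi_4$ on $c_2$, $-r$, $c_2-r$ respectively, the two equations force $\phi_2+\phi_4(1-\phi_2-\phi_3)=0$, and the eight assignments of $(\phi_2,\phi_3,\phi_4)\in\{a,b\}^3$ give the values $2a(1-a)$, $2b(1-b)$, $a(2-a-b)$, $b(2-a-b)$, $(a+b)(1-a)$, $(a+b)(1-b)$ and $a+b-2ab$, all of which are nonzero exactly when $b\notin\{-a,2-a,a/(2a-1)\}$; the cases $c_2=0$, $c_1=r$, $c_2=r$ are symmetric, and the doubly degenerate configurations are immediate. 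With that lemma inserted, your assembly of the three bullets goes through.
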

	
	Let $\L = \L[a, a]$ be a quadratic Latin square of order $q$ and note that $a \not\in \{-a, 2-a, a/(2a-1)\}$ since $a \not\in \{0, 1\}$. Since $\L$ is isotopic to the Cayley table of $(\F_q, +)$, it follows that $\L$ is $N_2$. Furthermore, $2(a+a-2)(a-1) = 4(a-1)^2 \in \R_q$, thus \eref{e:n2cond} is not satisfied. Therefore, \tref{t:quadinterc} is true when $a=b$. It remains to verify \tref{t:quadinterc} in the case where $a \neq b$.
	
	Let $n$ be a positive integer. The maximum number of intercalates in a Latin square of order $n$ is $n^2(n-1)/4$ and there is a Latin square of order $n$ that achieves this bound if and only if $n$ is a power of $2$ (see, e.g.,~\cite{maxinterc}). On the other hand, there is a Latin square of order $n$ with at least $(n-1)(n-3)(n-15)/8$ intercalates~\cite{smallsubsq}. By \tref{t:quadinterc}, a Latin square $\L[a, b]$ of order $q$ with $b \in \{-a, 2-a, a/(2a-1)\}$ (which exists whenever $9 \leq q \equiv 1 \bmod 4$) has at least $q(q-1)(q-11q^{1/2}-38)/32 = \Theta(q^3)$ intercalates, which is within a constant factor of the maximum number of intercalates that a Latin square of order $q$ can contain.
	
	A \emph{species} of Latin squares is a maximal set of Latin squares that is closed under applying isotopisms and taking parastrophes. It is known~\cite{quadcycs} that there are $7q^2/32 + O(q^{3/2})$ quadratic Latin squares of order $q$ that are $N_2$. Since the $N_2$ property of a Latin square is a species invariant, it would be useful to know the number of species that contain an $N_2$ quadratic Latin square of order $q$. \tref{t:quadisotop} allows us to deduce that there are at least $\Omega(q^2/\log(q))$ species that contain an $N_2$ quadratic Latin square of order $q$.	
	Counting problems such as this (see also~\cite{nu4}) partly motivated the study of when quadratic quasigroups are isotopic. 
	
	We now outline the structure of the remainder of the paper. In~\sref{s:interc}, we prove \tref{t:quadinterc}. After that, our next goal is to prove \tref{t:quadatp} for quadratic quasigroups $\Q_{a, b}$ with $a \neq b$. For a quasigroup $Q$ and $i \in \{1, 2, 3\}$, denote by $\Atp_i(Q)$ the projection of $\Atp(Q)$ onto the $i$-th coordinate. Let $\Q = \Q_{a, b}$ be a quadratic quasigroup of order $q$ with $a \neq b$ and let $i \in \{1, 2, 3\}$. In~\sref{s:prelim}, we prove that any autotopism of $\Q$ whose components all lie in $\AGL_d(p)$ is an automorphism of $\Q$. In \sref{s:not2trans}, we prove that $\Atp_i(\Q) \subseteq \AGL_d(p)$ if $\Atp_i(\Q)$ is not $2$-transitive and in \sref{s:2trans}, we prove that $\Atp_i(\Q) \subseteq \AGL_d(p)$ if $\Atp_i(\Q)$ is $2$-transitive. In~\sref{s:quadisotop}, we combine the results proved in \sref{s:prelim}, \sref{s:not2trans}, and \sref{s:2trans} to prove \tref{t:quadatp} and then we prove \tref{t:quadisotop}.
	Finally, we give some concluding remarks in~\sref{s:conc}.
	
	\section{Intercalates in quadratic Latin squares}\label{s:interc}
	
	In this section, we prove \tref{t:quadinterc}. We start with some definitions and simple facts.
	
	Let $L$ be a Latin square with symbol set $S$ and let $i$ and $j$ be distinct elements of $S$. The permutation mapping row $i$ of $L$ to row $j$, denoted by $r_{i, j}$, is defined by $r_{i, j}(L_{i, k}) = L_{j, k}$ for each $k \in S$. Such a permutation is called a \emph{row permutation} of $L$. Every row permutation of $L$ is a derangement. When writing a row permutation in disjoint cycle notation, every cycle is called a \emph{row cycle}. A row cycle of length two is a \emph{transposition}. A transposition in $r_{i, j}$ is equivalent to an intercalate in $L$ involving rows $i$ and $j$. 
	
	Let $\L = \L[a, b]$ be a quadratic Latin square of order $q$. Let $\chi : \F_q \to \{-1, 0, 1\}$ denote the extended quadratic character on $\F_q$. Define a map $\varphi = \varphi[a, b] : \F_q \to \F_q$ by
	\[
	\varphi(x) = \begin{cases}
		ax & \text{if } \chi(x)=1,\\
		bx & \text{otherwise}.
	\end{cases}
	\]
	Then $x*_{a, b}y = x+\varphi(y-x)=\L_{x, y}$ for all $\{x, y\} \subseteq \F_q$. Let $\{i, j\} \subseteq \F_q$ with $i \neq j$. The row permutation $r_{i, j}$ of $\L$ satisfies
	\begin{equation}\label{e:rp}
		r_{i, j} = \tau_j \circ \varphi \circ \tau_{i-j} \circ \varphi^{-1} \circ \tau_{-i}.
	\end{equation}
	
	The \emph{cycle structure} of a permutation is a sorted list of the lengths of its cycles. The following lemma regarding row permutations of quadratic Latin squares is a consequence of the large automorphism groups of quadratic quasigroups.
	
	\begin{lem}\label{l:quadrowperms}
		Let $\L = \L[a, b]$ be a quadratic Latin square of order $q$ and let $i$ and $j$ be distinct elements of $\F_q$.
		\begin{enumerate}[(i)]
			\item If $q \equiv 3 \bmod 4$, then the row permutation $r_{i, j}$ of $\L$ has the same cycle structure as the row permutation $r_{0,1}$ of $\L$.
			\item If $q \equiv 1 \bmod 4$, then the row permutation $r_{i, j}$ of $\L$ has the same cycle structure as the row permutation $r_{0,1}$ of $\L$ if $\chi(j-i)=1$ and it has the same cycle structure as the row permutation $r_{0,1}$ of $\L[b, a]$ if $\chi(j-i)=-1$.
		\end{enumerate}
	\end{lem}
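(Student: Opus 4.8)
The plan is to exploit the transitivity of $\Aut(\Q_{a,b})$ on rows, together with an explicit isomorphism between $\Q_{a,b}$ and $\Q_{b,a}$. Recall that if $(\theta,\theta,\theta)$ is an automorphism of a Latin square $L$, so $L_{\theta(x),\theta(y)} = \theta(L_{x,y})$ for all $x,y$, then for distinct rows $i$ and $j$ the row permutations satisfy $r_{\theta(i),\theta(j)} = \theta \circ r_{i,j} \circ \theta^{-1}$; more generally, if $\psi$ is an isomorphism from a Latin square $L'$ to $L$, then the row permutation $r_{\psi(i),\psi(j)}$ of $L$ equals $\psi \circ r'_{i,j} \circ \psi^{-1}$, where $r'_{i,j}$ is the corresponding row permutation of $L'$. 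In both cases the two permutations are conjugate, hence have the same cycle structure. So it suffices to produce, for each choice of distinct $i,j \in \F_q$, a suitable affine map of $\F_q$ carrying the pair $(0,1)$ to $(i,j)$, or to $(j,i)$, the latter being enough because $r_{i,j} = r_{j,i}^{-1}$.

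The key input is the size of $\Aut(\Q_{a,b})$: for every $\nu \in \R_q$ and $\mu \in \F_q$, the map $x \mapsto \nu x + \mu$ lies in $\Aut(\Q_{a,b})$. This follows from \tref{t:quadaut}, since every such map lies in $\AGamma2L_1(\F_q | \K)$ (taking the field automorphism to be $\Id$), and it also follows from a one-line computation: scaling the difference $y - x$ by $\nu \in \R_q$ preserves $\chi(y-x)$, so the defining cases of $*_{a,b}$ are respected. Dually, for every $\nu \in \N_q$ and $\mu \in \F_q$, the map $\psi : x \mapsto \nu x + \mu$ is an isomorphism from $\Q_{b,a}$ to $\Q_{a,b}$, because scaling $y - x$ by $\nu \in \N_q$ flips $\chi(y - x)$, which interchanges the roles of $a$ and $b$.

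With these maps in hand the lemma follows by cases. Fix distinct $i,j \in \F_q$ and put $\nu = j - i \neq 0$. If $\chi(j-i) = 1$, then $\theta : x \mapsto \nu x + i$ is an automorphism of $\L[a,b]$ with $\theta(0) = i$ and $\theta(1) = j$, so $r_{i,j}$ is conjugate within $\L[a,b]$ to $r_{0,1}$ and hence shares its cycle structure; this disposes of one case in each of (i) and (ii). Suppose instead $\chi(j-i) = -1$. If $q \equiv 3 \bmod 4$ then $\chi(-1) = -1$, so $\chi(i-j) = 1$, and applying the previous case to the pair $(j,i)$ shows that $r_{j,i}$, and therefore $r_{i,j} = r_{j,i}^{-1}$, has the cycle structure of $r_{0,1}$ of $\L[a,b]$; this proves (i). If instead $q \equiv 1 \bmod 4$ then $\nu \in \N_q$, so $\psi : x \mapsto \nu x + i$ is an isomorphism from $\Q_{b,a}$ to $\Q_{a,b}$ with $\psi(0) = i$ and $\psi(1) = j$, whence $r_{i,j}$ of $\L[a,b]$ is conjugate to $r_{0,1}$ of $\L[b,a]$ and shares its cycle structure; this proves (ii).

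Since the whole argument reduces to writing down explicit affine conjugating maps, I expect no genuine obstacle; the only point needing care is the bookkeeping of $\chi(-1)$ according to $q \bmod 4$, which is exactly what separates cases (i) and (ii). One could also argue throughout using $\AGamma2L_1(\F_q | \K) \leq \Aut(\Q_{a,b})$, which holds in every case of \tref{t:quadaut} including the exceptional ones, but the elementary verification above keeps the proof self-contained.
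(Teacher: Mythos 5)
Your proof is correct and is essentially the approach the paper itself takes: the paper disposes of this lemma by citing the arguments of \cite[Lemma 10]{cycatom} together with the fact that $\lambda_c$ for $c\in\N_q$ is an isomorphism between $\L[a,b]$ and $\L[b,a]$, which is exactly the conjugation-by-affine-maps argument (squares and translations as automorphisms, non-square scalings swapping $a$ and $b$, plus $r_{i,j}=r_{j,i}^{-1}$) that you have written out in full.
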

	
	\lref{l:quadrowperms} can be proved by combining the arguments used in the proof of~\cite[Lemma $10$]{cycatom} with the fact that $\lambda_c$ is an isomorphism from $\L[a, b]$ to $\L[b, a]$ for any $c \in \N_q$~\cite[Lemma $7$]{cycatom}.
	
	Fix any two distinct rows $i$ and $j$ of a Latin square $L$. The number of intercalates of $L$ involving rows $i$ and $j$ is exactly the number of transpositions in the row permutation $r_{i, j}$ (or $r_{j, i}$) of $L$. Therefore, in order to count the number of intercalates in a Latin square, it suffices to count the number of transpositions in each of its row permutations. Now let $\L = \L[a, b]$ be a quadratic Latin square of order $q$. By \lref{l:quadrowperms}, to count the number of transpositions in any row permutation of $\L$, it suffices to count the number of transpositions in the row permutation $r_{0, 1}$ of $\L$ and if $q \equiv 1 \bmod 4$, also the number of transpositions in the row permutation $r_{0, 1}$ of $\L[b, a]$. So, our goal in this section is to count the number of transpositions in the row permutations $r_{0, 1}$ of quadratic Latin squares. To do this, we employ the method used in~\cite{quadcycs}. We now give some definitions that come from~\cite{quadcycs}, with occasional changes in notation.
	
	Call a pair $(a, b) \in \F_q^2$ \emph{valid} if $\{ab, (a-1)(b-1)\} \subseteq \R_q$. 
	For a valid pair $(a, b) \in \F_q^2$, let $\omega[a, b]$ denote the row permutation $r_{0, 1}$ of the quadratic Latin square $\L[a, b]$ of order $q$. Since we have already verified \tref{t:quadinterc} in the case where $a=b$, we are only interested in counting the number of transpositions in permutations $\omega[a, b]$ where $(a, b) \in \F_q^2$ is valid and $a \neq b$. 
	Define
	\[
	\Gamma = \{\omega[a, b] : (a, b) \in \F_q^2 \text{ is valid and } a \neq b\}.
	\]
	
	\begin{defin}\label{d:satisfy}
	Let $\omega = \omega[a, b] \in \Gamma$, let $\varphi = \varphi[a, b]$, and let $z \in \{-1, 0, 1\}^{4}$. We say that $\omega$ \emph{satisfies} $z$ with $j \in \F_q$ if: $\omega^2(j)=j, \text{ } z_0 = \chi(j), \text{ } z_1 = \chi(\varphi^{-1}(j)-1), \text{ } z_2 = \chi(\omega(j)), \text{ and } z_3 = \chi(\varphi^{-1}(\omega(j))-1)$.
	\end{defin}
	
	Let $\omega = \omega[a, b] \in \Gamma$. If $\omega$ satisfies a sequence $z \in \{-1, 0, 1\}^4$ with some $j \in \F_q$, then we will sometimes just say that $\omega$ satisfies $z \in \{-1, 0, 1\}^4$. We denote the cycle of $\omega$ containing $0$ by $\omega_0$ and we denote the cycle of $\omega$ containing $a$ by $\omega_a$. It is a simple fact that if $\omega$ satisfies $z \in \{-1, 0, 1\}^4$ with $j \in \F_q$ and $\{0, a\} \cap \{j, \omega(j)\} = \emptyset$, then $z \in \{-1, 1\}^4$. We can determine information about the transpositions in $\omega$ by studying sequences in $\{-1, 1\}^4$ that it satisfies. 
	
	Define
	\begin{align*}
	&T_1 = \{(-1, -1, -1, 1), (-1, -1, 1, -1), (-1, 1, 1, 1), (1, -1, 1, 1)\},\\
	&T_{2, 1} = \{(-1, -1, -1, -1), (-1, -1, 1, 1), (-1, 1, 1, -1), (1, 1, 1, 1)\},\\
	&T_{2, 2} = \{(-1, -1, 1, 1), (-1, 1, -1, 1), (-1, 1, 1, -1), (1, -1, 1, -1)\}, \text{ and} \\
	&T_{2, 3} = \{(-1, -1, -1, -1), (-1, -1, 1, 1), (-1, 1, -1, 1), (-1, 1, 1, -1), (1, -1, 1, -1), (1, 1, 1, 1)\}.
	\end{align*}
	The following lemma can be inferred from~\cite{quadcycs}.
	
	\begin{lem}\label{l:sat}
		Let $\omega = \omega[a, b] \in \Gamma$ and let $(j, \omega(j))$ be a transposition in $\omega$ with $\{0, a\} \cap \{j, \omega(j)\}=\emptyset$.
		\begin{itemize}
			\item If $-a \neq b \in \R_q$, then there is a unique $z \in T_1 \cup T_{2, 1}$ such that $\omega$ satisfies $z$ with $j$ or $\omega(j)$.
			\item If $-a \neq b \in \N_q$, then there is a unique $z \in T_1 \cup T_{2, 2}$ such that $\omega$ satisfies $z$ with $j$ or $\omega(j)$.
			\item If $b=-a$, then there is a unique $z \in T_1 \cup T_{2, 3}$ such that $\omega$ satisfies $z$ with $j$ or $\omega(j)$.
		\end{itemize}
	\end{lem}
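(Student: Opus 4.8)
The plan is to reduce the statement to a short bookkeeping about an involution on $\{-1,1\}^4$ together with one uniform algebraic computation. Write $k=\omega(j)$, so that $(j,k)$ is a transposition: $\omega(k)=j$ and $j\neq k$. Put $x=\varphi^{-1}(j)$ and $y=\varphi^{-1}(k)$. Since $\varphi(0)=0$ and $\varphi(1)=a$ (because $\chi(1)=1$), the hypothesis $\{0,a\}\cap\{j,k\}=\emptyset$ gives $x,y\notin\{0,1\}$, and from $\omega(t)=1+\varphi(\varphi^{-1}(t)-1)$ (a special case of \eref{e:rp}) the transposition is equivalent to the pair of equations
\[
\varphi(y)=k=1+\varphi(x-1)\qquad\text{and}\qquad\varphi(x)=j=1+\varphi(y-1).
\]
Unwinding \dref{d:satisfy}, the only sequences that $\omega$ satisfies with $j$ or with $k$ are $z=(\chi(j),\chi(x-1),\chi(k),\chi(y-1))$ and $\rho(z)$, where $\rho$ denotes the involution $(z_0,z_1,z_2,z_3)\mapsto(z_2,z_3,z_0,z_1)$ of $\{-1,1\}^4$; both lie in $\{-1,1\}^4$ because $x,y\notin\{0,1\}$. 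So the conclusion of the lemma is exactly that $\{z,\rho(z)\}$ meets $T_1\cup T_{2,i}$ in precisely one tuple, where $i\in\{1,2,3\}$ corresponds to the three cases.

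The involution $\rho$ has four fixed points, namely $(1,1,1,1),(-1,-1,-1,-1),(1,-1,1,-1),(-1,1,-1,1)$, and six orbits of size two. A direct inspection of the four sets shows that $T_1\cup T_{2,3}$ is a transversal of all ten $\rho$-orbits, that $T_1\cup T_{2,1}$ is a transversal of the eight $\rho$-orbits other than $\{(1,-1,1,-1)\}$ and $\{(-1,1,-1,1)\}$ while meeting neither of those two, and that $T_1\cup T_{2,2}$ is a transversal of the eight $\rho$-orbits other than $\{(1,1,1,1)\}$ and $\{(-1,-1,-1,-1)\}$ while meeting neither of those two. Hence in the case $b=-a$ the conclusion is immediate from the transversal property, and it remains only to show: if $-a\neq b\in\R_q$ then $z\notin\{(1,-1,1,-1),(-1,1,-1,1)\}$, and if $-a\neq b\in\N_q$ then $z\notin\{(1,1,1,1),(-1,-1,-1,-1)\}$.

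This last step is the heart of the argument. The key observation is that $(a,b)$ valid forces $\chi(ab)=1$, hence $\chi(a)=\chi(b)=:\eta$ and $\chi\circ\varphi=\eta\chi$ pointwise; note that $\eta=1$ exactly when $b\in\R_q$. Suppose, for contradiction, that $z$ is one of the four offending tuples; each of them is $\rho$-fixed, so $z_0=z_2$ and $z_1=z_3$. From $\chi(x)=\eta\chi(j)=\eta z_0$ one reads off whether $\varphi(x)$ equals $ax$ or $bx$, and hence writes $x$ as an explicit scalar multiple of $j$; the identical reasoning expresses $y$ as the same scalar multiple of $k$ (using $z_0=z_2$). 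Likewise $\chi(x-1)=z_1=z_3=\chi(y-1)$ determines, through $z_1$, which of $a,b$ appears when $\varphi(x-1)$ and $\varphi(y-1)$ are expanded in the two displayed equations. Substituting into those equations and subtracting them collapses everything to $(k-j)\bigl(1+c_1/c_2\bigr)=0$ with $\{c_1,c_2\}=\{a,b\}$; since $k\neq j$, this forces $b=-a$, contradicting the case hypothesis. I expect the $\rho$-orbit bookkeeping and this subtraction to be routine; the only point that needs care is tracking which branch of the piecewise map $\varphi$ is taken at each of the four arguments $x$, $x-1$, $y$, $y-1$, which is precisely the information encoded by the coordinates of $z$.
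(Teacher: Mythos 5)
Your proof is correct, but it is worth noting that the paper does not actually prove this lemma at all: it is stated with the remark that it ``can be inferred from'' the cited work on cycle structures of quadratic Latin squares, where the sets $T_1$ and $T_{2,i}$ originate (the same source invoked later for Lemmas 2.4 and 2.9--2.10). So you have supplied a self-contained argument where the paper defers to a reference. Your route --- encode the two sequences satisfied by a transposition as $z$ and $\rho(z)$ for the swap involution $\rho(z_0,z_1,z_2,z_3)=(z_2,z_3,z_0,z_1)$, check by inspection that $T_1\cup T_{2,3}$ is a transversal of all ten $\rho$-orbits while $T_1\cup T_{2,1}$ and $T_1\cup T_{2,2}$ each miss exactly two of the four $\rho$-fixed tuples, and then show algebraically that a $\rho$-fixed tuple forces $b=-a$ --- is clean and has the added virtue of explaining structurally why the ``even periodic'' sequences $P_{2,i}$ of Lemma 2.5 are exactly the ones satisfied at both endpoints of a transposition, and why they only occur when $b=-a$ (consistent with Lemmas 2.9 and 2.10). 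The bookkeeping checks out: your identification $\omega(t)=1+\varphi(\varphi^{-1}(t)-1)$ is the $(i,j)=(0,1)$ case of \eref{e:rp}, the exclusion $\{0,a\}\cap\{j,\omega(j)\}=\emptyset$ does give $x,y\notin\{0,1\}$ and hence $z\in\{-1,1\}^4$, and validity does give $\chi(a)=\chi(b)=\eta$ and $\chi\circ\varphi=\eta\chi$. One half-sentence you glossed: in the final subtraction you assert the two branch coefficients satisfy $\{c_1,c_2\}=\{a,b\}$, whereas a priori they could coincide; but if they coincide the identity reads $2(k-j)=0$, contradicting $j\neq k$ in odd characteristic, and for the four offending tuples in each case one coefficient is selected by $\eta z_0$ and the other by $z_1$, which one checks are always opposite branches --- so either way the contradiction stands. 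This is a routine repair, not a gap; the proof is sound.
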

	
	Let $\omega = \omega[a, b] \in \Gamma$. If $-a \neq b \in \R_q$, then set $i=1$. If $-a \neq b \in \N_q$, then set $i=2$. Otherwise, set $i=3$. Suppose that $(j, \omega(j))$ is a transposition in $\omega$ with $\{0, a\} \cap \{j, \omega(j)\} = \emptyset$.  If $\omega$ satisfies a sequence in $T_1$ with $j$ or $\omega(j)$, then $(j, \omega(j))$ is a \emph{Type One} transposition. Otherwise, $(j, \omega(j))$ is a \emph{Type Two} transposition and satisfies a sequence in $T_{2, i}$ with $j$ or $\omega(j)$.
	The following lemma gives us a method of counting the number of Type One transpositions in a permutation in $\Gamma$.
	
	\begin{lem}\label{l:t1count}
		The number of Type One transpositions in a permutation $\omega \in \Gamma$ is equal to the number of sequences in $T_1$ that $\omega$ satisfies.
	\end{lem}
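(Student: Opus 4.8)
The plan is to exhibit a bijection between the Type One transpositions of $\omega$ and the sequences in $T_1$ that $\omega$ satisfies. To each Type One transposition $t = (j, \omega(j))$, \lref{l:sat} associates a unique sequence $z$ (among the sequences in $T_1 \cup T_{2,i}$, with $i \in \{1,2,3\}$ determined by $b$ as above) such that $\omega$ satisfies $z$ with $j$ or with $\omega(j)$; by the definition of \emph{Type One}, this $z$ lies in $T_1$. Set $\Phi(t) := z$. Since $z$ is satisfied by $\omega$, the map $\Phi$ takes values in the set of sequences in $T_1$ that $\omega$ satisfies, so it remains to prove that $\Phi$ is surjective and injective.

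Surjectivity follows from three elementary observations. First, $\omega$ is a row permutation and hence a derangement, so whenever $\omega^2(k)=k$ the pair $(k,\omega(k))$ is a genuine transposition. Second, from \eref{e:rp} we have $\varphi(1)=a$, hence $\varphi^{-1}(a)=1$ and $\varphi^{-1}(a)-1=0$; combined with $\chi(0)=0$, this shows that if $\omega$ satisfies a sequence $z \in T_1 \subseteq \{-1,1\}^4$ with an element $k$, then $0 \notin \{k,\omega(k)\}$ (otherwise $z_0$ or $z_2$ vanishes) and $a \notin \{k,\omega(k)\}$ (otherwise $z_1$ or $z_3$ vanishes). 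Consequently, if $\omega$ satisfies $z \in T_1$ with some $j \in \F_q$, then $(j,\omega(j))$ is a Type One transposition, and $\Phi(j,\omega(j))=z$ by the uniqueness in \lref{l:sat}. Hence $\Phi$ is onto.

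For injectivity it suffices to prove that each $z \in T_1$ is satisfied by $\omega$ with at most one element of $\F_q$: if $\Phi(t)=\Phi(t')=z$ for Type One transpositions $t=\{j,\omega(j)\}$ and $t'=\{j',\omega(j')\}$, then $\omega$ satisfies $z$ with some $k \in t$ and with some $k' \in t'$, so $k=k'$, and therefore $t=\{k,\omega(k)\}=t'$. To prove the claim, suppose $\omega$ satisfies $z \in T_1$ with $j$, and put $p=\varphi^{-1}(j)$ and $\bar{p}=\varphi^{-1}(\omega(j))$. By \eref{e:rp} we have $\omega(x)=\varphi(\varphi^{-1}(x)-1)+1$ for every $x$, so the relations $\omega(j)=\varphi(p-1)+1$ and $\omega(\omega(j))=\varphi(\bar{p}-1)+1=j$ become
\[ \varphi(\bar{p}) - \varphi(p-1) = 1 \qquad\text{and}\qquad \varphi(p) - \varphi(\bar{p}-1) = 1. \]
Since $z \in \{-1,1\}^4$ we have $j,\omega(j)\neq 0$, hence $p,\bar{p}\neq 0$; moreover $\chi(p)=\chi(a)z_0$ and $\chi(\bar{p})=\chi(a)z_2$ (using $\chi(a)=\chi(b)$, which follows from $ab \in \R_q$), while $\chi(p-1)=z_1$ and $\chi(\bar{p}-1)=z_3$ by \dref{d:satisfy}. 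Thus the branch of $\varphi$ at each of $p,\bar{p},p-1,\bar{p}-1$ is determined by $z$, so the two displayed equations form a linear system in $(p,\bar{p})$ whose $2\times 2$ coefficient matrix has entries in $\{a,b,-a,-b\}$. A direct check over the four sequences of $T_1$ shows that this matrix is nonsingular whenever $a\neq b$ and $a,b\neq 0$ (its determinant equals $\pm a(a-b)$ or $\pm b(a-b)$ in every case). Hence $(p,\bar{p})$, and with it $j=\varphi(p)$, is uniquely determined by $z$. This gives injectivity and completes the proof.

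The only step that goes beyond formal bookkeeping is the last one: translating a transposition of $\omega$ into the displayed pair of $\varphi$-equations and verifying, case by case over the four elements of $T_1$, that the associated $2\times 2$ determinant never vanishes. Everything else follows from \dref{d:satisfy}, \lref{l:sat}, the derangement property of row permutations, and the identities $\chi(0)=0$, $\varphi^{-1}(a)=1$, and $\chi(a)=\chi(b)$.
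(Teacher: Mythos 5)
Your proof is correct and follows essentially the same route as the paper's: a map from Type One transpositions to the sequences in $T_1$ that $\omega$ satisfies, well defined by the uniqueness in \lref{l:sat}, surjective because a satisfied $z \in \{-1,1\}^4$ forces $\omega^2(j)=j$ with $\{0, a\} \cap \{j, \omega(j)\} = \emptyset$ (so the pair is a genuine Type One transposition), and injective because each $z \in T_1$ is satisfied with at most one $j$. The only difference is at that last step: where the paper invokes external results of~\cite{quadcycs} (its Lemmas $3.5$, $3.6$ and Definition $3.7$) for the ``at most one $j$'' claim, you prove it directly via the $2 \times 2$ linear system in $\bigl(\varphi^{-1}(j), \varphi^{-1}(\omega(j))\bigr)$, and your computation checks out: in all eight cases (the four sequences of $T_1$ together with the two possible values of $\chi(a)$, which also determines the branch data via $\chi(a)=\chi(b)$) the determinant is $\pm a(a-b)$ or $\pm b(a-b)$, nonzero since $a \neq b$ and $ab \in \R_q$, so your argument is self-contained where the paper's is not.
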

	\begin{proof}
		Suppose that $(j, \omega(j))$ is a Type One transposition in $\omega$. Then there is some $z \in T_1$ such that $\omega$ satisfies $z$ with $j$ or $\omega(j)$. \lref{l:sat} says that $\omega$ does not satisfy any $z' \in T_1 \setminus \{z\}$ with $j$ or $\omega(j)$. Therefore, we can naturally define a map $\rho$ from the set of Type One transpositions in $\omega$ to the sequences in $T_1$ that $\omega$ satisfies. It is immediate from \dref{d:satisfy} that $\rho$ is surjective. Also, the combination of~\cite[Lemma $3.5$]{quadcycs}, \cite[Lemma $3.6$]{quadcycs}, and \cite[Definition $3.7$]{quadcycs} implies that if $z' \in T_1$, then there is at most one $j' \in \F_q$ such that $\omega$ satisfies $z'$ with $j'$. Hence, $\rho$ is a bijection and the lemma follows.	
	\end{proof}
	
	We next want to prove a result analogous to \lref{l:t1count} for Type Two transpositions. Let $\omega \in \Gamma$ and let $z \in T_{2, i}$ for some $i \in \{1, 2, 3\}$. Unlike sequences in $T_1$, there may be many $j \in \F_q$ such that $\omega$ satisfies $z$ with $j$. Denote the number of such $j$ by $e(\omega, z)$. Call a sequence $z \in \{-1, 1\}^4$ \emph{even periodic} if $z_0=z_2$ and $z_1=z_3$. Let $P_{2, i} \subseteq T_{2, i}$ be the subset consisting of even periodic sequences.
	
	\begin{lem}\label{l:t2count}
		Let $\omega = \omega[a, b] \in \Gamma$. If $-a \neq b \in \R_q$, then set $i=1$. If $-a \neq b \in \N_q$, then set $i=2$. Otherwise, set $i=3$. The number of Type Two transpositions in $\omega$ is
		\[
		\sum_{z \in T_{2, i} \setminus P_{2, i}} e(\omega, z) + \frac12 \sum_{z \in P_{2, i}} e(\omega, z)
		\]
	\end{lem}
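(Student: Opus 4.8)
The plan is to construct a weighted correspondence between the Type Two transpositions of $\omega$ and the pairs $(j, z)$ with $z \in T_{2, i}$ such that $\omega$ satisfies $z$ with $j$; the weight $\tfrac12$ attached to the even periodic sequences will emerge from the fibre structure of this correspondence.

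The starting point is an elementary observation, immediate from \dref{d:satisfy}: if $\omega$ satisfies $z = (z_0, z_1, z_2, z_3)$ with $j$, then $\omega^2(\omega(j)) = \omega(j)$ and $\omega$ satisfies the shifted sequence $\sigma(z) := (z_2, z_3, z_0, z_1)$ with $\omega(j)$. Here $\sigma$ is an involution of $\{-1, 1\}^4$, and $\sigma(z) = z$ exactly when $z$ is even periodic. I would also record that if $\omega$ satisfies some $z \in \{-1, 1\}^4$ with $j$, then $\{0, a\} \cap \{j, \omega(j)\} = \emptyset$: since $\chi(0) = 0$ we cannot have $0 \in \{j, \omega(j)\}$, and since $\chi(1) = 1$ gives $\varphi(1) = a$, hence $\chi(\varphi^{-1}(a) - 1) = \chi(0) = 0$, we cannot have $a \in \{j, \omega(j)\}$ either. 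As $\omega$ is a derangement, $\{j, \omega(j)\}$ is then a genuine transposition of $\omega$, and since it satisfies $z \in T_{2, i}$ with $j$, it is a Type Two transposition.

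Next I would form $A = \{(j, z) : z \in T_{2, i}, \ \omega \text{ satisfies } z \text{ with } j\}$, so that $|A| = \sum_{z \in T_{2, i}} e(\omega, z)$ and the first-coordinate projection $A \to \F_q$ is injective, and define $\pi : A \to \{\text{Type Two transpositions of } \omega\}$ by $\pi(j, z) = \{j, \omega(j)\}$, which is well defined by the previous paragraph. Surjectivity of $\pi$ is the existence clause of \lref{l:sat}: for a Type Two transposition $t$, the unique sequence in $T_1 \cup T_{2, i}$ that $\omega$ satisfies with an element of $t$ lies in $T_{2, i}$, by definition of ``Type Two''. To compute the fibres of $\pi$, fix a Type Two transposition $t = \{j, \omega(j)\}$ and let $z^\ast \in T_{2, i}$ be the sequence attached to it by \lref{l:sat}. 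If $z^\ast$ is even periodic, then $z^\ast \in P_{2, i}$ and, by the shift observation, $\omega$ satisfies $z^\ast$ with both $j$ and $\omega(j)$; hence $\pi^{-1}(t) = \{(j, z^\ast), (\omega(j), z^\ast)\}$ has size $2$ and both preimages have second coordinate in $P_{2, i}$. If $z^\ast$ is not even periodic, say $\omega$ satisfies $z^\ast$ with $j$, then $\omega$ satisfies $\sigma(z^\ast) \neq z^\ast$ with $\omega(j)$, and the uniqueness clause of \lref{l:sat} forces $\sigma(z^\ast) \notin T_1 \cup T_{2, i}$ (else $z^\ast$ and $\sigma(z^\ast)$ would be two distinct sequences of $T_1 \cup T_{2, i}$ satisfied with an element of $t$); as $\sigma(z^\ast)$ is the only sequence $\omega$ satisfies with $\omega(j)$, we get $\pi^{-1}(t) = \{(j, z^\ast)\}$, of size $1$, with second coordinate in $T_{2, i} \setminus P_{2, i}$. (Alternatively, for this last point one may simply check by inspection of the small sets that $\sigma$ carries each non-even-periodic element of $T_{2, i}$ outside $T_1 \cup T_{2, i}$.)

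Finally I would collect the count. Partition the Type Two transpositions as $\mathcal{P} \sqcup \mathcal{O}$ according to whether the attached sequence $z^\ast$ is even periodic or not. The fibre analysis shows that the pairs of $A$ with second coordinate in $P_{2, i}$ are exactly $\bigsqcup_{t \in \mathcal{P}} \pi^{-1}(t)$, so $\sum_{z \in P_{2, i}} e(\omega, z) = 2 |\mathcal{P}|$; similarly $\sum_{z \in T_{2, i} \setminus P_{2, i}} e(\omega, z) = |\mathcal{O}|$. Hence
\[
\sum_{z \in T_{2, i} \setminus P_{2, i}} e(\omega, z) + \frac12 \sum_{z \in P_{2, i}} e(\omega, z) = |\mathcal{O}| + |\mathcal{P}|,
\]
which is the total number of Type Two transpositions in $\omega$. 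The main obstacle is the bookkeeping forced by the asymmetry of \dref{d:satisfy} in $j$ versus $\omega(j)$: one must consistently track which element of a transposition carries the relevant $T_{2, i}$-sequence and use $\sigma$ to pass between the two descriptions. Once this is set up, \lref{l:sat} does the heavy lifting, and the even periodic sequences contributing with multiplicity two is precisely the source of the factor $\tfrac12$.
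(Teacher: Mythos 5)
Your proposal is correct and follows essentially the same route as the paper: your shift involution $\sigma$ is exactly the paper's observation that $\omega$ satisfies $z$ with both $j$ and $\omega(j)$ precisely when $z$ is even periodic, and your fibre analysis of $\pi$ is a more explicit rendering of the paper's bijection for aperiodic transpositions and $1\,{:}\,2$ correspondence for periodic ones. The extra care you take (well-definedness via the exclusion of $0$ and $a$, uniqueness of the sequence attached to each element, and the inspection that $\sigma$ maps non-periodic sequences outside $T_1 \cup T_{2,i}$) only fills in details the paper leaves implicit.
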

	\begin{proof}
		Suppose that $(j, \omega(j))$ is a Type Two transposition in $\omega$. Then there is some $z \in T_{2, i}$ such that $\omega$ satisfies $z$ with $j$ or $\omega(j)$. Without loss of generality, $\omega$ satisfies $z$ with $j$. It is immediate that $\omega$ satisfies $z$ with $\omega(j)$ if and only if $z \in P_{2, i}$. Call $(j, \omega(j))$ periodic if $\omega$ satisfies a sequence in $P_{2, i}$ with $j$ or $\omega(j)$, otherwise call $(j, \omega(j))$ aperiodic.
		
		\lref{l:sat} implies that there is a bijection from the aperiodic Type Two transpositions in $\omega$ to the pairs $(z, j)$ where $z \in T_{2, i} \setminus P_{2, i}$ and $j \in \F_q$ is such that $\omega$ satisfies $z$ with $j$. Hence, the number of aperiodic Type Two transpositions in $\omega$ is 
		\[
		\sum_{z \in T_{2, i} \setminus P_{2, i}} e(\omega, z).
		\]
		Similarly, there is a $1:2$ relation between the periodic Type Two transpositions in $\omega$ and the pairs $(z, j)$ where $z \in P_{2, i}$ and $j \in \F_q$ is such that $\omega$ satisfies $z$ with $j$. Hence, the number of periodic Type Two transpositions in $\omega$ is 
		\[
		\frac12 \sum_{z \in P_{2, i}} e(\omega, z).
		\]
		The lemma follows.		
	\end{proof}
	
	We are now ready to start counting transpositions in permutations in $\Gamma$. We first consider Type One transpositions. The following two lemmas can be proved by following the proof of~\cite[Lemma $4.1$]{quadcycs}.
	
	\begin{lem}\label{l:t1r}
		Let $\omega = \omega[a, b] \in \Gamma$ with $a \in \R_q$.
		\begin{enumerate}[(i)]
			\item $\omega$ satisfies $(-1, -1, -1, 1)$ if and only if 
			\[
			2(b-1)(a-b) \in \R_q \text{ and } \{(2ab-a-b)(a-b), b(a+b)(b-1)(a-b), b(a+b-2)(a-b)\} \subseteq \N_q.
			\]
			\item $\omega$ satisfies $(-1, -1, 1, -1)$ if and only if 
			\[
			2a(1-b)(a-b) \in \R_q \text{ and } \{(a+b)(1-b)(a-b), b(a+b-2ab)(a-b), (2-a-b)(a-b)\} \subseteq \N_q.
			\]
			\item $\omega$ satisfies $(-1, 1, 1, 1)$ if and only if 
			\[
			\{(a+b-2)(a-b), (a-1)(a+b)(a-b), a(2ab-a-b)(a-b)\} \subseteq \R_q \text{ and } 2b(a-1)(a-b) \in \N_q.
			\]
			\item $\omega$ satisfies $(1, -1, 1, 1)$ if and only if 
			\[
			\{a(2-a-b)(a-b), (a+b-2ab)(a-b), a(1-a)(a+b)(a-b)\} \subseteq \R_q \text{ and } 2(1-a)(a-b) \in \N_q.
			\]
		\end{enumerate}
	\end{lem}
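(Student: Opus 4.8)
The plan is to unwind \dref{d:satisfy} explicitly in the case $a \in \R_q$ and translate each of the four conditions into a system of quadratic-character constraints on rational functions of $a$ and $b$. Fix $\omega = \omega[a,b]$ and $\varphi = \varphi[a,b]$, and recall from \eref{e:rp} that $\omega = r_{0,1} = \tau_1 \circ \varphi \circ \tau_{-1} \circ \varphi^{-1}$. The first step is to find, for a prospective fixed point $j$ of $\omega^2$, closed-form rational expressions for $j$, $\varphi^{-1}(j)$, $\omega(j)$, and $\varphi^{-1}(\omega(j))$ in terms of $a$, $b$ (and the sign data $z_0, z_1, z_2, z_3$). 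Since $\varphi$ acts as multiplication by $a$ on squares and by $b$ on non-squares, each branch choice $z_k = \pm 1$ selects which of $a$ or $b$ appears; writing $\omega(j) = 1 + \varphi(\varphi^{-1}(j) - 1)$ and imposing $\omega^2(j) = j$ gives one linear equation (per sign pattern) that pins down $j$ uniquely. This is exactly the computation carried out in \cite[Lemma $4.1$]{quadcycs}, so I would follow that proof essentially verbatim, keeping careful track of which residue-class assumptions ($a \in \R_q$, plus the four $z_k$) are in force.

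The second step is bookkeeping: for a given target sequence $z \in \{(-1,-1,-1,1), (-1,-1,1,-1), (-1,1,1,1), (1,-1,1,1)\}$, substitute the $j$ found in Step 1 back into the four defining conditions $\chi(j) = z_0$, $\chi(\varphi^{-1}(j) - 1) = z_1$, $\chi(\omega(j)) = z_2$, $\chi(\varphi^{-1}(\omega(j)) - 1) = z_3$, clear denominators, and factor. Because $\chi$ is multiplicative and $\chi(a) = 1$ by hypothesis, each condition becomes a statement that a specific polynomial in $a, b$ lies in $\R_q$ or $\N_q$; common factors such as powers of $(a-b)$ (which is nonzero since $\omega \in \Gamma$) can be absorbed or must be retained depending on parity, which is why the stated conditions carry the odd factor $(a-b)$. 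One then checks that the four resulting character conditions, after using $\chi(a)=1$ to simplify, collapse to the three-element-in-$\N_q$ plus one-element-in-$\R_q$ form displayed in parts (i)--(iv). Conversely, given that those character conditions hold, the same $j$ witnesses that $\omega$ satisfies $z$, so the equivalence is genuinely an ``if and only if''.

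The main obstacle I anticipate is purely organisational rather than conceptual: there are four sequences, each forcing a different branch pattern in $\varphi$ and $\varphi^{-1}$, hence four separate (though structurally identical) rational-function computations, and in each one must be vigilant that the branch one \emph{assumed} when solving for $j$ is consistent with the sign $z_k$ one is later imposing — i.e.\ that $\chi(\varphi^{-1}(j))$ and $\chi(j)$ are compatible with having used multiplication by $a$ versus $b$ at that step. Getting the factorisations into the exact normalised shape in the statement (choosing representatives of each square class so that the ``$\in \R_q$'' versus ``$\in \N_q$'' split matches) requires repeatedly invoking $\chi(a) = 1$ and $\chi(-1) = \pm 1$ according to $q \bmod 4$; since parts (i)--(iv) are stated uniformly in $q$, the factor $(a-b)$ is what carries any sign discrepancy, and I would double-check one pattern (say $(-1,-1,-1,1)$) against a small field such as $\F_9$ or $\F_{13}$ to make sure the normalisation is right before committing to all four. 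Given \lref{l:sat}, uniqueness of the witnessing $j$ is automatic, so no extra argument is needed there; the content is entirely in the explicit algebra, and deferring to the template of \cite[Lemma $4.1$]{quadcycs} is the efficient route.
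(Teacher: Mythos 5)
Your proposal matches the paper's route exactly: the paper itself proves \lref{l:t1r} by deferring to the computation in \cite[Lemma 4.1]{quadcycs}, i.e.\ fixing the sign pattern, solving the resulting linear equation for the unique candidate $j$ with $\omega^2(j)=j$ via \eref{e:rp}, and translating the four conditions of \dref{d:satisfy} into quadratic-character constraints simplified using $\chi(a)=1$ and multiplicativity. The approach and the anticipated bookkeeping are both sound, so no further comment is needed.
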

	
	\begin{lem}\label{l:t1nr}
		Let $\omega = \omega[a, b] \in \Gamma$ with $a \in \N_q$.
		\begin{enumerate}[(i)]
			\item $\omega$ satisfies $(-1, -1, -1, 1)$ if and only if 
			\[
			a(1-a)(a+b)(a-b) \in \R_q \text{ and } \{a(2-a-b)(a-b), 2(1-a)(a-b), (a+b-2ab)(a-b)\} \subseteq \N_q.
			\]
			\item $\omega$ satisfies $(-1, -1, 1, -1)$ if and only if 
			\[
			(1-b)(a+b)(a-b) \in \R_q \text{ and } \{2a(1-b)(a-b), (2-a-b)(a-b), b(a+b-2ab)(a-b)\} \subseteq \N_q.
			\]
			\item $\omega$ satisfies $(-1, 1, 1, 1)$ if and only if 
			\[
			\{a(2ab-a-b)(a-b), 2b(a-1)(a-b), (a+b-2)(a-b)\} \subseteq \R_q \text{ and } (a-1)(a+b)(a-b) \in \N_q.
			\]
			\item $\omega$ satisfies $(1, -1, 1, 1)$ if and only if 
			\[
			\{(2ab-a-b)(a-b), b(a+b-2)(a-b), 2(b-1)(a-b)\} \subseteq \R_q \text{ and } b(b-1)(a+b)(a-b) \in \N_q.
			\]
		\end{enumerate}
	\end{lem}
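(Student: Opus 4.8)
The plan is a direct computation, following the proof of~\cite[Lemma~4.1]{quadcycs}. First I would record the ambient facts. Since $ab \in \R_q$, the field elements $a$ and $b$ have the same quadratic character, so the hypothesis $a \in \N_q$ forces $b \in \N_q$ as well. Hence $\varphi = \varphi[a, b]$ maps $\R_q$ onto $\N_q$ and $\N_q$ onto $\R_q$ and fixes $0$; in particular $\chi \circ \varphi = -\chi$, and $\varphi^{-1}$ acts on a nonzero $y \in \F_q$ by $y \mapsto y/a$ when $\chi(y) = -1$ and by $y \mapsto y/b$ when $\chi(y) = 1$. From \eref{e:rp} with $(i, j) = (0, 1)$ we get $\omega = \tau_1 \circ \varphi \circ \tau_{-1} \circ \varphi^{-1}$, so $\omega(x) = \varphi(\varphi^{-1}(x) - 1) + 1$ for all $x \in \F_q$, and $\omega$ is a derangement (as any row permutation is).

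Next, for each of the four sequences $z = (z_0, z_1, z_2, z_3) \in T_1$ I would run the following template. Suppose $\omega$ satisfies $z$ with some $j$, and set $k = \omega(j)$; since $z \in \{-1, 1\}^4$, the characters $\chi(j)$, $\chi(\varphi^{-1}(j) - 1)$, $\chi(k)$, and $\chi(\varphi^{-1}(k) - 1)$ are prescribed by $z$, so every application of $\varphi^{\pm 1}$ inside the identities $\omega(j) = k$ and $\omega(k) = j$ is forced onto a known branch. Explicitly, $\varphi^{-1}(j) = j / c_0$ with $c_0 = a$ if $z_0 = -1$ and $c_0 = b$ if $z_0 = 1$; $\varphi$ scales the argument $\varphi^{-1}(j) - 1$ by $d_1$, where $d_1 = a$ if $z_1 = 1$ and $d_1 = b$ if $z_1 = -1$; and $c_2, d_3$ are read off from $z_2, z_3$ in the same way. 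Then $k = d_1(j/c_0 - 1) + 1$, and imposing $\omega(k) = j$, i.e. $d_3(k/c_2 - 1) + 1 = j$, and eliminating $k$ gives a single linear equation for $j$ with leading coefficient $d_1 d_3 - c_0 c_2$. One checks that for the four sequences in $T_1$ this coefficient is, respectively, $a(b - a)$, $b(b - a)$, $a(a - b)$, $b(a - b)$, hence nonzero; so $j$ is uniquely determined, namely
\[
j = \frac{c_0\bigl(c_2 d_3 - c_2 + d_1 d_3 - d_3\bigr)}{d_1 d_3 - c_0 c_2},
\]
and then $k = \omega(j)$ is an explicit rational function of $a$ and $b$ too.

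With $j$ and $k$ in hand, ``$\omega$ satisfies $z$ with $j$'' becomes equivalent to the four character equalities $\chi(j) = z_0$, $\chi(j/c_0 - 1) = z_1$, $\chi(k) = z_2$, $\chi(k/c_2 - 1) = z_3$ (the condition $\omega^2(j) = j$ holds by construction, and $\omega(j) \neq j$ since $\omega$ is a derangement). Each equality has the shape $\chi(F(a, b)) = \pm 1$ for an explicit rational $F$; clearing denominators via $\chi(u/v) = \chi(uv)$ and dropping square factors via $\chi(w^2 u) = \chi(u)$ turns it into one of the membership statements of the lemma. For example, for $z = (-1, -1, -1, 1)$ the template produces $j = a(a + b - 2)/(b - a)$, $\varphi^{-1}(j) - 1 = 2(1 - a)/(a - b)$, $k = (a + b - 2ab)/(a - b)$, and $\varphi^{-1}(k) - 1 = (1 - a)(a + b)/(a(a - b))$, and the four equalities become exactly $a(2 - a - b)(a - b) \in \N_q$, $2(1 - a)(a - b) \in \N_q$, $(a + b - 2ab)(a - b) \in \N_q$, and $a(1 - a)(a + b)(a - b) \in \R_q$, which is part~(i). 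Parts~(ii), (iii), (iv) come out of $(-1, -1, 1, -1)$, $(-1, 1, 1, 1)$, $(1, -1, 1, 1)$ in the same way. For the converse of each part I would reverse this: given the stated conditions, set $j$ by the displayed formula; the first condition makes $\chi(j) = z_0$, validating the branch $c_0$; the second makes $\chi(\varphi^{-1}(j) - 1) = z_1$, validating $d_1$ and pinning down $k := \omega(j)$; the third and fourth give $\chi(k) = z_2$ and $\chi(\varphi^{-1}(k) - 1) = z_3$; and a short computation undoing the elimination confirms $\omega(k) = j$, so $\omega$ satisfies $z$ with $j$.

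The only real work is the algebra in the preceding step — solving the linear equation for $j$, substituting to obtain $k$, and simplifying the four character conditions into the stated polynomial form while keeping careful track of signs (note that $-1$ need not be a square in $\F_q$) and of square factors — and that is exactly the main obstacle: routine but error-prone. Structurally this is the $\N_q$ analogue of the computation behind \lref{l:t1r}; the sole difference is that here $\chi \circ \varphi = -\chi$ rather than $\chi \circ \varphi = \chi$, which reshuffles the branches $c_0, d_1, c_2, d_3$ and is what yields the $\N_q/\R_q$ pattern of (i)--(iv) in place of the one in \lref{l:t1r}.
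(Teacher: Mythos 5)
Your proposal is correct and matches the paper's approach: the paper proves \lref{l:t1nr} by simply invoking the computation of~\cite[Lemma 4.1]{quadcycs}, which is exactly the direct branch-by-branch calculation you carry out (using $\chi\circ\varphi=-\chi$ when $a\in\N_q$, solving the resulting linear equation for $j$, and translating the four character conditions into the stated residue conditions). Your worked case $(-1,-1,-1,1)$ reproduces part~(i) exactly, and the same template yields the other parts, so there is no gap.
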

	
	We can use \lref{l:t1r} and \lref{l:t1nr} in combination with \lref{l:t1count} to count the number of Type One transpositions a permutation in $\Gamma$ can contain.
	
	\begin{lem}\label{l:t1}
		Let $\omega \in \Gamma$. The number of Type One transpositions that $\omega$ contains is either $0$ or $2$.
	\end{lem}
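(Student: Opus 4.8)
The plan is to invoke \lref{l:t1count}, which reduces the problem to showing that the number of sequences in $T_1$ that $\omega = \omega[a, b]$ satisfies is $0$ or $2$, and then to read off these sequences directly from \lref{l:t1r} and \lref{l:t1nr}. I would first dispose of the degenerate cases $b \in \{-a, 2-a, a/(2a-1)\}$: in each of these one of $a+b$, $a+b-2$, $2ab-a-b$ vanishes, and inspecting \lref{l:t1r} and \lref{l:t1nr} shows that every one of the four displayed conditions contains that vanishing quantity as a factor of an element that is required to be a non-zero square or a non-zero non-square. Hence $\omega$ satisfies no sequence in $T_1$ and the count is $0$. So from now on assume $b \notin \{-a, 2-a, a/(2a-1)\}$, so that $a+b$, $a+b-2$, $2ab-a-b$ are non-zero; note also $2 \neq 0$, and $a-1$, $b-1$, $a-b$ are non-zero since $(a,b)$ is valid and $a \neq b$.

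Next I would set $\epsilon = \chi(-1)$ and introduce the four signs
\[
X = \chi\bigl(2(a-1)(a-b)\bigr), \quad Y = \chi\bigl((a+b)(a-1)(a-b)\bigr), \quad Z = \chi\bigl((2ab-a-b)(a-b)\bigr), \quad W = \chi\bigl((a+b-2)(a-b)\bigr),
\]
each of which lies in $\{-1, 1\}$. The key point is that validity of $(a,b)$ forces $\chi(a) = \chi(b)$ (from $ab \in \R_q$) and $\chi(a-1) = \chi(b-1)$ (from $(a-1)(b-1) \in \R_q$); so, using also $\chi(-u) = \epsilon\chi(u)$ together with $2-a-b = -(a+b-2)$, $a+b-2ab = -(2ab-a-b)$, $1-a = -(a-1)$, $1-b = -(b-1)$, every product of field elements appearing in \lref{l:t1r} and \lref{l:t1nr} reduces, after discarding square factors and absorbing the scalar $\chi(a) = \chi(b) = \pm 1$, to $\pm X$, $\pm Y$, $\pm Z$ or $\pm W$, the sign being controlled by $\chi(a)$ and by how many factors have been negated. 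Carrying this substitution through \lref{l:t1r} (the case $\chi(a) = \chi(b) = 1$), I expect to obtain: $\omega$ satisfies $(-1,-1,-1,1)$ iff $(X,Y,Z,W) = (1,-1,-1,-1)$; satisfies $(-1,-1,1,-1)$ iff $(X,Y,Z,W) = (\epsilon,-\epsilon,-\epsilon,-\epsilon)$; satisfies $(-1,1,1,1)$ iff $(X,Y,Z,W) = (-1,1,1,1)$; and satisfies $(1,-1,1,1)$ iff $(X,Y,Z,W) = (-\epsilon,\epsilon,\epsilon,\epsilon)$. Therefore, regardless of $\epsilon$: if $(X,Y,Z,W) = (1,-1,-1,-1)$ then $\omega$ satisfies exactly the two sequences $(-1,-1,-1,1)$ and one of $(-1,-1,1,-1)$, $(1,-1,1,1)$; if $(X,Y,Z,W) = (-1,1,1,1)$ then $\omega$ satisfies exactly the two sequences $(-1,1,1,1)$ and one of $(1,-1,1,1)$, $(-1,-1,1,-1)$; and otherwise $\omega$ satisfies none of the four. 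Since $(1,-1,-1,-1) \neq (-1,1,1,1)$, the count is $0$ or $2$ in this case.

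The remaining case $\chi(a) = \chi(b) = -1$ is handled identically using \lref{l:t1nr}: the same bookkeeping now places the satisfied sequences at $(X,Y,Z,W) = (-1,-1,-1,1)$ and at $(X,Y,Z,W) = (1,1,1,-1)$, paired up by $\epsilon$ exactly as before, and again yields a count of $0$ or $2$. (Alternatively, one can reduce the non-residue case to the residue case via the isomorphism $\lambda_c : \L[a,b] \to \L[b,a]$ for $c \in \N_q$, together with \lref{l:quadrowperms}.) Applying \lref{l:t1count} then completes the proof. There is no conceptual obstacle: the argument is a finite verification, and the only thing requiring care is performing the reduction to the four signs $X, Y, Z, W$ correctly — it is precisely the validity hypotheses $\chi(a) = \chi(b)$ and $\chi(a-1) = \chi(b-1)$ that make the eight conditions of \lref{l:t1r}--\lref{l:t1nr} collapse to statements about these signs, and hence into the neat $\epsilon$-indexed pairing that forces the count to be even.
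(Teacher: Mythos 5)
Your proposal is correct and takes essentially the same route as the paper: it reduces via \lref{l:t1count} to counting the sequences in $T_1$ that $\omega$ satisfies, and then analyses the conditions of \lref{l:t1r} and \lref{l:t1nr} using multiplicativity of $\chi$ together with the validity consequences $\chi(a)=\chi(b)$ and $\chi(a-1)=\chi(b-1)$; your sign-vector reductions (including the degenerate cases $b\in\{-a,\,2-a,\,a/(2a-1)\}$, where a vanishing factor kills all four conditions) check out. The only difference is organisational: the paper argues conditionally (if one sequence is satisfied, then exactly one further sequence is, according to $\chi(-1)$, and a third is impossible), whereas you tabulate all four conditions as equalities of $(X,Y,Z,W)$ against $\epsilon$-twisted patterns, which yields the same pairing and hence the count $0$ or $2$.
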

	\begin{proof}
		Let $(a, b) \in \F_q^2$ be such that $\omega = \omega[a, b]$. We prove the result for when $a \in \R_q$, the claim for when $a \in \N_q$ can be handled using analogous arguments. Suppose that $\omega$ satisfies a sequence in $T_1$. We will show that $\omega$ satisfies exactly two sequences in $T_1$. The lemma will then follow from \lref{l:t1count}.
		Suppose that $\omega$ satisfies $(-1, -1, -1, 1)$. Define $f_1 = 2(b-1)(a-b)$, $f_2 = (a+b-2ab)(a-b)$, $f_3 = (a+b)(b-1)(a-b)$, and $f_4 = (a+b-2)(a-b)$. By \lref{l:t1r}, $f_1 \in \R_q$ and $\{-f_2, bf_3, bf_4\} \subseteq \N_q$. From \lref{l:t1r}, we know that $\omega$ satisfies $(-1, -1, 1, -1)$ if and only if $-af_1 \in \R_q$ and $\{bf_2, -f_3, -f_4\} \subseteq \N_q$. Recall that $b \in \R_q$, since $(a, b)$ is valid and $a \in \R_q$. Therefore, $\omega$ satisfies $(-1, -1, 1, -1)$ if and only if $-1 \in \R_q$, which is true if and only if $q \equiv 1 \bmod 4$. Similarly, $\omega$ satisfies $(1, -1, 1, 1)$ if and only if $q \equiv 3 \bmod 4$. If $\omega$ satisfies $(-1, 1, 1, 1)$, then by \lref{l:t1r} we must simultaneously have $f_1 \in \R_q$ and $b(a-1)f_1/(b-1) \in \N_q$. But $b(a-1)/(b-1) \in \R_q$, since $(a, b)$ is valid. Thus, $\omega$ cannot satisfy $(-1, 1, 1, 1)$. We have shown that if $\omega$ satisfies $(-1, -1, -1, 1)$, then $\omega$ satisfies precisely two sequences in $T_1$. Similar arguments reach the same conclusion if we assume that $\omega$ satisfies a sequence in $\{(-1, -1, 1, -1), (-1, 1, 1, 1), (1, -1, 1, 1)\}$.	 
	\end{proof}
	
We next consider Type Two transpositions in permutations in $\Gamma$.
	Expanding on the proof method of~\cite[Lemma $4.2$]{quadcycs}, we obtain the following lemma.
	
	\begin{lem}\label{l:t2}
		Let $\omega = \omega[a, b] \in \Gamma$ with $b \neq -a$. If $\omega$ contains a Type Two transposition, then one of the following is true:
		\begin{enumerate}[(i)]
			\item $b=2-a \in \R_q$ and $\omega$ satisfies $(-1, -1, 1, 1)$ and no other sequence in $T_{2, 1}$,
			\item $b=2-a \in \N_q$ and $\omega$ satisfies $(-1, 1, 1, -1)$ and no other sequence in $T_{2, 2}$,
			\item $b=a/(2a-1) \in \R_q$ and $\omega$ satisfies $(-1, 1, 1, -1)$ and no other sequence in $T_{2, 1}$,
			\item $b=a/(2a-1) \in \N_q$ and $\omega$ satisfies $(-1, -1, 1, 1)$ and no other sequence in $T_{2, 2}$.
		\end{enumerate}
	\end{lem}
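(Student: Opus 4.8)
The plan is to run the case analysis behind \cite[Lemma~$4.2$]{quadcycs}, while keeping track of exactly which sequence of $T_{2,i}$ is realised. Since $\omega = \omega[a,b]$ is the row permutation $r_{0,1}$ of $\L[a,b]$, formula \eref{e:rp} gives $\omega(x) = 1 + \varphi(\varphi^{-1}(x)-1)$. Validity of $(a,b)$ means $\chi(ab)=1$, so $\chi(a)=\chi(b)$; hence for $x \notin \{0,a\}$ the sign $\chi(x)$ alone decides whether $\varphi^{-1}(x)$ equals $x/a$ or $x/b$, and then the sign $\chi(\varphi^{-1}(x)-1)$ decides whether $\varphi$ acts on $\varphi^{-1}(x)-1$ as multiplication by $a$ or by $b$. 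So on the ``branch'' of $\F_q$ cut out by a fixed pair of signs $\big(\chi(x),\chi(\varphi^{-1}(x)-1)\big)$, the map $\omega$ is affine, $\omega(x)=\mu x+\nu$ with $\mu \in \{1,\,a/b,\,b/a\}$ and $\nu \in \{1-a,\,1-b\}$, both read off explicitly from that pair of signs.

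Now suppose $\omega$ contains a Type Two transposition $(j,\omega(j))$. As $b\neq-a$, we have $i\in\{1,2\}$, with $i=1$ exactly when $b\in\R_q$ (equivalently $a\in\R_q$, by validity). By \lref{l:sat}, after interchanging $j$ and $\omega(j)$ if necessary, there is a $z\in T_{2,i}$ that $\omega$ satisfies with $j$; by \dref{d:satisfy} this means $z=\big(\chi(j),\chi(\varphi^{-1}(j)-1),\chi(\omega(j)),\chi(\varphi^{-1}(\omega(j))-1)\big)$. The first two entries put $j$ on a branch where $\omega$ acts as $x\mapsto\mu_1 x+\nu_1$, and the last two put $\omega(j)$ on a branch where $\omega$ acts as $x\mapsto\mu_2 x+\nu_2$; since $\omega(\omega(j))=j$, combining these gives $(1-\mu_1\mu_2)\,j=\mu_2\nu_1+\nu_2$.

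The core of the proof is to go through the four possibilities for $z\in T_{2,i}$ and the two values of $i$. For every one of these eight cases one finds $\mu_1\mu_2=1$, so the identity above forces $\mu_2\nu_1+\nu_2=0$, a condition on $a$ and $b$ only. For two of the four sequences ($(-1,-1,-1,-1)$ and $(1,1,1,1)$ when $i=1$; $(-1,1,-1,1)$ and $(1,-1,1,-1)$ when $i=2$) this reads $2(1-a)=0$ or $2(1-b)=0$, impossible since $a,b\notin\{0,1\}$ by validity, so those sequences are satisfied by no transposition at all. For the remaining two it reads $(1-a)+(1-b)=0$ (i.e.\ $b=2-a$) for one of them and $a/b-2a+1=0$, i.e.\ $a+b=2ab$, i.e.\ $b=a/(2a-1)$, for the other; the qualifier ``$\in\R_q$'' or ``$\in\N_q$'' is automatic because $\chi(b)=\chi(a)$ is $1$ when $i=1$ and $-1$ when $i=2$. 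Matching sequences to relations then yields case (i) (from $(-1,-1,1,1)\in T_{2,1}$) and case (iii) (from $(-1,1,1,-1)\in T_{2,1}$) when $i=1$, and case (ii) (from $(-1,1,1,-1)\in T_{2,2}$) and case (iv) (from $(-1,-1,1,1)\in T_{2,2}$) when $i=2$.

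It remains to justify the ``no other sequence'' clause: the relations $b=2-a$ and $b=a/(2a-1)$ are incompatible (together they give $(a-1)^2=0$), so whichever one holds rules out the other of the two surviving sequences in $T_{2,i}$, while the two impossible sequences are never satisfied; and because $\omega$ does contain a Type Two transposition, the remaining sequence is indeed satisfied. I expect the only real work to be the bookkeeping in the previous paragraph — extracting $(\mu_\ell,\nu_\ell)$ correctly from each of the eight (sequence, $i$) combinations while keeping the $i=1$ versus $i=2$ sign conventions straight — together with verifying that nothing else in $T_{2,i}$ slips through, which is exactly the incompatibility just noted.
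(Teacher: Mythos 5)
Your proposal is correct and takes essentially the route the paper itself relies on: the paper proves this lemma only by invoking the case-analysis method of Lemma~4.2 of~\cite{quadcycs}, which is precisely the piecewise-affine computation you reconstruct ($\omega(x)=1+\varphi(\varphi^{-1}(x)-1)$, with $\omega$ affine of slope $\mu\in\{1,a/b,b/a\}$ on each branch, and $\omega^2(j)=j$ forcing $\mu_2\nu_1+\nu_2=0$ since $\mu_1\mu_2=1$ for every sequence in $T_{2,1}\cup T_{2,2}$). I checked all eight (sequence, $i$) combinations: your $(\mu,\nu)$ values, the resulting relations $b=2-a$ and $a+b=2ab$, the impossibility of the even periodic sequences, the sequence-to-case matching, and the incompatibility argument for the ``no other sequence'' clause all agree with the statement.
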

	
	Using the same proof method of~\cite[Lemma $4.2$]{quadcycs} applied to permutations $\omega[a, -a] \in \Gamma$, we can obtain the following lemma.
	
	\begin{lem}\label{l:t2-a}
		Let $\omega = \omega[a, -a] \in \Gamma$. If $\omega$ contains a Type Two transposition, then one of the following is true:
		\begin{enumerate}[(i)]
		\item $a \in \R_q$ and $\omega$ satisfies $(-1, 1, -1, 1)$ or $(1, -1, 1, -1)$ and no other sequence in $T_{2, 3}$,
		\item $a \in \N_q$ and $\omega$ satisfies $(-1, -1, -1, -1)$ or $(1, 1, 1, 1)$ and no other sequence in $T_{2, 3}$.
		\end{enumerate}
	\end{lem}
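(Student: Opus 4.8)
The plan is to adapt the proof of~\cite[Lemma $4.2$]{quadcycs} to the family of pairs $(a, -a)$. Write $\omega = \omega[a, -a]$ and $\varphi = \varphi[a, -a]$, and recall from \eref{e:rp} that $\omega(x) = 1 + \varphi(\varphi^{-1}(x) - 1)$, where $\varphi(y) = ay$ if $\chi(y) = 1$ and $\varphi(y) = -ay$ if $\chi(y) = -1$. Since $(a, -a)$ is valid, $-a^2 = a \cdot (-a) \in \R_q$, so $q \equiv 1 \bmod 4$ and $\chi(-1) = 1$; also $(a - 1)(-a - 1) = 1 - a^2 \in \R_q$, and $\chi(-a) = \chi(a)$, so that $\varphi^{-1}(y) = y/a$ when $\chi(y) = \chi(a)$ and $\varphi^{-1}(y) = -y/a$ when $\chi(y) = -\chi(a)$. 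Throughout, I would keep the cases $a \in \R_q$ and $a \in \N_q$ separate, just as \lref{l:t1r} and \lref{l:t1nr} do in the Type One analysis.

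For each of the six sequences $z = (z_0, z_1, z_2, z_3) \in T_{2, 3}$ and each of the two cases for $\chi(a)$, the next step is to turn ``$\omega$ satisfies $z$ with $j$'' (\dref{d:satisfy}) into an explicit algebraic condition on $j$. With $\chi(a)$ fixed, the sign $z_0 = \chi(j)$ selects which of $j/a$, $-j/a$ equals $\varphi^{-1}(j)$; then $z_1 = \chi(\varphi^{-1}(j) - 1)$ selects whether $\varphi$ acts on $\varphi^{-1}(j) - 1$ as multiplication by $a$ or by $-a$, which expresses $\omega(j) = 1 + \varphi(\varphi^{-1}(j) - 1)$ as an affine function of $j$; applying the same two steps to $\omega(j)$ in place of $j$, guided by $z_2$ and $z_3$, expresses $\omega^2(j)$ as an affine function of $j$. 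The equation $\omega^2(j) = j$ is then affine in $j$, and the four sign conditions become quadratic-character conditions on explicit rational expressions in $a$ and $j$, exactly of the shape seen in \lref{l:t1r} and \lref{l:t1nr}. A convenient feature of the case $b = -a$ is that I expect this affine equation to degenerate: for some sequences it should hold identically in $j$, leaving only the four character constraints (which are then satisfiable); for the others it should reduce to a manifestly impossible equation (in the instances I have checked, $2 = 0$ in $\F_q$ or $a \in \{-1, 0, 1\}$, all excluded), so that no $j$ works.

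Carrying this out, I expect to find that $(-1, -1, 1, 1)$ and $(-1, 1, 1, -1)$ are never satisfied when $b = -a$; that $(-1, 1, -1, 1)$ and $(1, -1, 1, -1)$ can be satisfied only if $a \in \R_q$; and that $(-1, -1, -1, -1)$ and $(1, 1, 1, 1)$ can be satisfied only if $a \in \N_q$. The lemma then follows immediately. Suppose $\omega$ contains a Type Two transposition $(j, \omega(j))$. By \lref{l:sat} and the definition of a Type Two transposition, there is a unique $z \in T_{2, 3}$ such that $\omega$ satisfies $z$ with $j$ or $\omega(j)$; by the previous paragraph $z$ is one of the four periodic sequences in $T_{2, 3}$, and whether $z \in \{(-1, 1, -1, 1), (1, -1, 1, -1)\}$ or $z \in \{(-1, -1, -1, -1), (1, 1, 1, 1)\}$ determines $\chi(a)$. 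If $a \in \R_q$, then $z \in \{(-1, 1, -1, 1), (1, -1, 1, -1)\}$, and $\omega$ satisfies no other sequence of $T_{2, 3}$: the sequences $(-1, -1, 1, 1)$ and $(-1, 1, 1, -1)$ are never satisfied, while $(-1, -1, -1, -1)$ and $(1, 1, 1, 1)$ would force $a \in \N_q$. This gives~(i), and~(ii) is entirely symmetric. The main obstacle is the sheer volume of the case analysis — six sequences times two parities of $\chi(a)$ — together with the care needed to keep the branches of $\varphi$ and $\varphi^{-1}$ (and the harmless factor $\chi(-1) = 1$) straight; each individual reduction is a short, routine computation of exactly the kind already carried out in~\cite[Lemma $4.2$]{quadcycs} and \lref{l:t1r}.
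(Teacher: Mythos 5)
Your proposal is correct and follows exactly the route the paper intends: the paper gives no details, simply stating that the lemma follows by applying the proof method of~\cite[Lemma 4.2]{quadcycs} to $\omega[a,-a]$, which is precisely what you do. Your fleshed-out computation checks out: since every $z \in T_{2,3}$ has $z_0z_1z_2z_3 = 1$, the fixed-point equation degenerates to a condition on $a$ alone, which fails (forcing $2=0$ or $a \in \{-1,0,1\}$, all excluded by validity of $(a,-a)$) except for the two periodic sequences matching $\chi(a)$ as in the statement.
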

	
	\lref{l:t2} tells us that if $\omega[a, b] \in \Gamma$ contains a Type Two transposition, then $b \in \{-a, 2-a, a/(2a-1)\}$. Also, as noted in~\cite{quadcycs}, if $(a, b) \in \F_q^2$ is a valid pair and $b \in \{-a, 2-a, a/(2a-1)\}$, then $q \equiv 1 \bmod 4$. We can use \lref{l:t2count}, \lref{l:t2} and \lref{l:t2-a} in combination with \tref{t:Weil}~\cite{Weil} and \tref{t:quadWeil}~\cite{quadWeil} below to count the number of Type Two transpositions that a permutation in $\Gamma$ can contain. By $\F_q[x]$ we mean the set of polynomials over $\F_q$.
	
	\begin{thm}\label{t:Weil}
		Let $f\in \F_q[x]$ be a polynomial of degree $n > 0$ that has $n$
		distinct roots in $\F_q$. Then,
		\begin{equation*}\label{e:weil}
			\left\vert \displaystyle\sum_{c\smash{\in \F_q}} \chi\big(f(c)\big) \right\vert \leq (n - 1)q^{1/2}.
		\end{equation*}
	\end{thm}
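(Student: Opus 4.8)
The plan is to interpret the character sum geometrically, as the error term in the point count of a hyperelliptic curve, and then invoke the Riemann hypothesis for curves over finite fields. Write $S=\sum_{c\in\F_q}\chi(f(c))$ and consider the affine plane curve $C\colon y^2=f(x)$. Since $q$ is odd and $\chi(0)=0$, for each $c\in\F_q$ the equation $y^2=f(c)$ has exactly $1+\chi(f(c))$ solutions $y\in\F_q$: two if $f(c)$ is a non-zero square, none if it is a non-square, and one if $f(c)=0$. Summing over $c$, the number of affine $\F_q$-points of $C$ is $\sum_{c\in\F_q}\bigl(1+\chi(f(c))\bigr)=q+S$, so $S$ equals the affine $\F_q$-point count of $C$ minus $q$.

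Next I would pass to the smooth projective model $\widetilde{C}$ of $C$. Because $f$ has $n$ distinct roots in $\F_q$ it is squarefree, so $C$ has no affine singular points (such a point would be a common root of $f$ and $f'$), and $y^2-f(x)$ is absolutely irreducible (a factorisation over $\overline{\F_q}$ would exhibit $f$ as a perfect square). The genus of $\widetilde{C}$ is $g=\lfloor(n-1)/2\rfloor$, so $2g\le n-1$, and $\widetilde{C}$ differs from $C$ only above $x=\infty$: there is a single point there when $n$ is odd, whereas when $n$ is even there are two, and these are $\F_q$-rational exactly when the leading coefficient of $f$ is a square. Bookkeeping these finitely many points yields $\#\widetilde{C}(\F_q)=q+1+S$ when $n$ is odd, and $\#\widetilde{C}(\F_q)=q+1+S+\varepsilon$ with $\varepsilon\in\{-1,1\}$ when $n$ is even.

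The key input is the Hasse--Weil bound, i.e.\ the Riemann hypothesis for curves over finite fields, which is precisely what \cite{Weil} establishes: for a smooth projective geometrically irreducible curve $\widetilde{C}/\F_q$ of genus $g$ one has
\[
\bigl|\#\widetilde{C}(\F_q)-(q+1)\bigr|\le 2g\,q^{1/2}.
\]
When $n$ is odd this immediately gives $|S|\le 2g\,q^{1/2}=(n-1)q^{1/2}$. When $n$ is even it gives $|S+\varepsilon|\le 2g\,q^{1/2}=(n-2)q^{1/2}$, and since $|\varepsilon|=1\le q^{1/2}$ we get $|S|\le(n-2)q^{1/2}+1\le(n-1)q^{1/2}$. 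In both cases \tref{t:Weil} follows; note that only squarefreeness of $f$ (and $\deg f=n$) is used, so the hypothesis of $n$ distinct roots lying \emph{in $\F_q$} is more than we need here.

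The main obstacle is the Hasse--Weil bound itself: it is a genuinely deep theorem, and any route to \tref{t:Weil} either invokes it as a black box or reproves it --- for instance through Stepanov's elementary polynomial method, in which one forces an auxiliary polynomial of controlled degree to vanish to high order at every $\F_q$-point of $C$ and then extracts a contradiction from a degree count. Since the statement is classical and enters this paper only as an external estimate, I would simply cite \cite{Weil}; the geometric reduction sketched above is the one part I would spell out, and it is routine.
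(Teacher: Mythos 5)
Your treatment matches the paper's: \tref{t:Weil} is quoted there purely as an external classical estimate with the citation \cite{Weil}, and no proof is given, so deferring the deep input (the Riemann hypothesis for curves) to that reference is exactly what the paper does. Your geometric reduction --- the count $q+S$ of affine points on $y^2=f(x)$, squarefreeness giving smoothness and absolute irreducibility, the genus bound $2g\le n-1$, and the bookkeeping of the one or two points above $x=\infty$ --- is correct and standard, so the proposal is fine as it stands.
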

	
	\begin{thm}
		\label{t:quadWeil}
		Let $f \in \F_q[x]$ be a quadratic polynomial with two distinct roots in $\F_q$. Then,
		\begin{equation*}
			\left\vert \displaystyle\sum_{c \in \F_q} \chi\big(f(c)\big) \right\vert \leq 1.
		\end{equation*}
	\end{thm}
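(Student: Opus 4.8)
The plan is to reduce the statement to a standard closed-form evaluation of a quadratic character sum. Write $f(x) = \alpha(x - r_1)(x - r_2)$ with $\alpha \in \F_q^*$ and $r_1 \neq r_2$ in $\F_q$. Since $q$ is odd we may complete the square: setting $m = (r_1 + r_2)/2$ and $\delta = ((r_1 - r_2)/2)^2$, a short expansion gives $f(x) = \alpha\big((x - m)^2 - \delta\big)$, and $\delta \neq 0$ precisely because $r_1 \neq r_2$. Substituting $y = x - m$ and using multiplicativity of $\chi$, we get $\sum_{c \in \F_q}\chi(f(c)) = \chi(\alpha)\sum_{y \in \F_q}\chi(y^2 - \delta)$, so it suffices to prove that $\sum_{y \in \F_q}\chi(y^2 - \delta) = -1$; this immediately yields $\bigl|\sum_{c}\chi(f(c))\bigr| = |\chi(\alpha)| = 1$.

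To evaluate $\sum_{y}\chi(y^2 - \delta)$ I would use the elementary counting identity $\#\{y \in \F_q : y^2 = z\} = 1 + \chi(z)$, valid for every $z \in \F_q$ under the convention $\chi(0) = 0$. Grouping the sum over $y$ according to the value $z = y^2$ gives
\[
\sum_{y \in \F_q}\chi(y^2 - \delta) \;=\; \sum_{z \in \F_q}\bigl(1 + \chi(z)\bigr)\chi(z - \delta) \;=\; \sum_{z \in \F_q}\chi(z - \delta) \;+\; \sum_{z \in \F_q}\chi\bigl(z(z - \delta)\bigr).
\]
The first sum vanishes, being a translate of $\sum_{w \in \F_q}\chi(w) = 0$. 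For the second sum, the $z = 0$ term is $\chi(0) = 0$, while for $z \neq 0$ we have $\chi\bigl(z(z - \delta)\bigr) = \chi(z^2)\chi(1 - \delta/z) = \chi(1 - \delta/z)$; as $z$ ranges over $\F_q^*$ the quantity $\delta/z$ also ranges over $\F_q^*$, hence $1 - \delta/z$ ranges over $\F_q \setminus \{1\}$, and therefore $\sum_{z \neq 0}\chi(1 - \delta/z) = \bigl(\sum_{v \in \F_q}\chi(v)\bigr) - \chi(1) = -1$. Combining the two pieces gives $\sum_{y}\chi(y^2 - \delta) = -1$, completing the argument.

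There is no genuine obstacle here; the only points requiring care are verifying the counting identity $\#\{y : y^2 = z\} = 1 + \chi(z)$ across the three cases (nonzero square, nonsquare, zero) and consistently tracking the convention $\chi(0) = 0$. It is worth noting that this bound is strictly sharper than what Theorem~\ref{t:Weil} provides for $n = 2$ (namely $q^{1/2}$), which is why a dedicated argument is needed; in fact the computation above yields the exact value $\sum_{c \in \F_q}\chi(f(c)) = -\chi(\alpha)$, the classical closed form for a quadratic character sum with nonzero discriminant.
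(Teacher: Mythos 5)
Your proof is correct: the completion of the square, the counting identity $\#\{y : y^2 = z\} = 1 + \chi(z)$, and the final summation are all handled properly, and you recover the exact classical value $\sum_{c \in \F_q}\chi\big(f(c)\big) = -\chi(\alpha)$ for a quadratic with nonzero discriminant. The paper does not prove this statement itself -- it cites it from the literature -- and your argument is precisely the standard evaluation given in the cited sources, so there is nothing substantively different to compare.
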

	
	\begin{lem}\label{l:t22-aa/(2a-1)}
		Let $\omega = \omega[a, b] \in \Gamma$ and let $M$ be the number of Type Two transpositions that $\omega$ contains. If $b \not\in \{-a, 2-a, a/(2a-1)\}$, then $M=0$. Otherwise,	
		\[
		\frac{q-11q^{1/2}-38}{16} \leq M \leq \frac{q+11q^{1/2}+38}{16}.
		\] 
	\end{lem}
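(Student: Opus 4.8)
The plan is to split along the dichotomy in the statement. If $b \not\in \{-a, 2-a, a/(2a-1)\}$ then in particular $b \neq -a$, so \lref{l:t2} applies, and since none of its conclusions (i)--(iv) can hold for such $b$, the permutation $\omega$ contains no Type Two transposition; hence $M = 0$. From now on assume $b \in \{-a, 2-a, a/(2a-1)\}$, so that $q \equiv 1 \bmod 4$. By \lref{l:t2} and \lref{l:t2-a} there is a short explicit list of sequences $z \in T_{2, i}$ that $\omega$ can possibly satisfy: a single sequence lying outside $P_{2, i}$ when $b \in \{2-a, a/(2a-1)\}$, and two sequences, both lying in $P_{2, i}$, when $b = -a$. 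Combining this with \lref{l:t2count}, $M = e(\omega, z)$ in the first case and $M = \tfrac12\big(e(\omega, z) + e(\omega, z')\big)$ in the second. Since an average of numbers lying in an interval lies in that interval, it therefore suffices to prove that $e(\omega, z) \in \big[(q - 11q^{1/2} - 38)/16,\ (q + 11q^{1/2} + 38)/16\big]$ for each relevant sequence $z$.

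Fix such a $z$. The next step is to reduce the count $e(\omega, z)$ to a statement purely about quadratic characters of linear polynomials in $j$. Writing $\omega(x) = 1 + \varphi(\varphi^{-1}(x) - 1)$ (from \eref{e:rp}), one resolves the branch of $\varphi^{-1}$ at $j$ (forced by $z_0$ together with $\chi(a)$), then the branch of $\varphi$ applied to $\varphi^{-1}(j) - 1$ (forced by $z_1$), and likewise at $\omega(j)$ (forced by $z_2$ and $z_3$). On the resulting branch $\omega(j)$ is an explicit linear function of $j$, and hence so is $\omega^2(j)$. A direct computation using the hypothesis $b \in \{-a, 2-a, a/(2a-1)\}$ shows that, for precisely the sequence(s) $z$ singled out by \lref{l:t2} and \lref{l:t2-a}, this linear function equals the identity, so the condition $\omega^2(j) = j$ becomes vacuous on that branch. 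Consequently $e(\omega, z)$ is exactly the number of $j \in \F_q$ for which four explicit linear polynomials $\ell_0, \ell_1, \ell_2, \ell_3 \in \F_q[x]$ (with $\ell_0 = x$, $\ell_1 = x - c_0$ for the appropriate $c_0 \in \{a,b\}$, and $\ell_2, \ell_3$ read off from the formula for $\omega(j)$) satisfy $\chi(\ell_k(j)) = \epsilon_k$ for prescribed signs $\epsilon_0, \dots, \epsilon_3$. One also checks that the roots of $\ell_0, \dots, \ell_3$ are pairwise distinct (using only $a \notin \{0, 1, -1\}$ and the relation between $a$ and $b$), so that every product $\ell_k \ell_m$, $\ell_k \ell_m \ell_n$, and $\ell_0 \ell_1 \ell_2 \ell_3$ is squarefree.

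To estimate this count, write it as $\tfrac{1}{16} \sum_{j \in \F_q} \prod_{k=0}^{3}\big(1 + \epsilon_k \chi(\ell_k(j))\big)$ up to an error of at most $2$ arising from the at most four values of $j$ at which some $\ell_k$ vanishes (there the product is at most $8$ but should contribute $0$). Expanding the product: the constant term contributes $q$; the four single-character sums vanish since each $\ell_k$ is a nonconstant linear polynomial; the six degree-$2$ sums are each at most $1$ in modulus by \tref{t:quadWeil}; the four degree-$3$ sums are each at most $2q^{1/2}$ and the single degree-$4$ sum is at most $3q^{1/2}$ by \tref{t:Weil}. Collecting terms yields $\big|e(\omega, z) - q/16\big| \leq \tfrac{1}{16}\big(6 + 8q^{1/2} + 3q^{1/2}\big) + 2 = (11q^{1/2} + 38)/16$, which is exactly the required bound; together with the $M = 0$ case and the reduction in the first paragraph this completes the proof.

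The main obstacle is organisational rather than conceptual: one must run the branch-resolution computation for each combination of $b \in \{-a, 2-a, a/(2a-1)\}$, $a \in \R_q$ versus $a \in \N_q$, and the relevant $z$ (some of which can be reduced to others via the isomorphism $\lambda_c : \L[a,b] \to \L[b,a]$ for $c \in \N_q$), and in every instance verify both that $\omega^2(j) = j$ becomes vacuous on the relevant branch and that the four linear forms are pairwise non-proportional. The one genuinely quantitative point is that the six degree-$2$ character sums must be bounded via the quadratic Weil bound \tref{t:quadWeil} (each $\leq 1$) rather than the generic bound $\leq q^{1/2}$: the latter would inflate the error term to roughly $17q^{1/2}/16$ and miss the claimed constant $11q^{1/2}/16$.
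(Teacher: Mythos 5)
Your proposal is correct and follows essentially the same route as the paper: the same reduction of $M$ to $e(\omega,z)$ (or the average of two such counts when $b=-a$) via \lref{l:t2count}, \lref{l:t2}, and \lref{l:t2-a}, followed by the same character-sum expansion of an indicator product over four linear polynomials, bounded by \tref{t:Weil} and \tref{t:quadWeil} with identical error accounting ($\pm 2$ from the at most four roots and $6+11q^{1/2}$ from the nonconstant terms). The only cosmetic difference is that you make explicit the branch-resolution showing $\omega^2(j)=j$ is automatic on the relevant branch, which the paper compresses into ``it is easy to determine.''
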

	\begin{proof}
		If $b \not\in \{-a, 2-a, a/(2a-1)\}$, then the result follows immediately from \lref{l:t2}. Consider when $b \in \{2-a, a/(2a-1)\}$. There are four cases to consider, depending on whether $b = 2-a$ or $b=a/(2a-1)$ and whether $b \in \R_q$ or $b \in \N_q$. There is minimal difference in the arguments used for each case, so we will only consider when $b=2-a \in \R_q$. 
		
		Let $z = (-1,-1,1,1)$. \lref{l:t2count} and \lref{l:t2} together tell us that the number of Type Two transpositions that $\omega$ contains is $e(\omega, z)$. 
		Define
		\[
		V = \left\{j \in \F_q : \{j, (j+a-2)(2-a)^{-1}\} \subseteq \N_q \text{ and } \{j+a-1, a^{-1}(j-1)\} \subseteq \R_q\right\}.
		\]
		Using \eref{e:rp} and \dref{d:satisfy}, it is easy to determine that $\omega$ satisfies $z$ with $j$ if and only if $j \in V$. Thus, $e(w, z) = |V|$. Define linear polynomials $\ell_1$, $\ell_2$, $\ell_3$, and $\ell_4$ over $\F_q$ by $\ell_1(x)=x$, $\ell_2(x) = (x+a-2)(2-a)^{-1}$, $\ell_3(x) = x+a-1$, and $\ell_4(x) = a^{-1}(x-1)$. Also define
		\[
		W(x) = (1-\chi(\ell_1(x)))(1-\chi(\ell_2(x)))(1+\chi(\ell_3(x)))(1+\chi(\ell_4(x)))
		\]
		and
		\[
		S = \sum_{x \in \F_q} W(x).
		\]
		If $x \in V$, then $W(x) = 16$. If $x \in \{0, 2-a, 1-a, 1\}$, then $-8 \leq W(x) \leq 8$. If $x \in \F_q \setminus (V \cup \{0, 2-a, 1-a, 1\})$, then $W(x)=0$. Hence,
		\begin{equation}\label{e:S}
		16|V|-32 \leq S \leq 16|V|+32. 
		\end{equation}
		Expanding $W$ and using the fact that $\chi$ is a homomorphism on $\F_q^*$, we can write $S$ as a sum consisting of $\binom{4}{k}$ terms of the form $\sum_{x \in \F_q} \chi(\pm K(x))$ where $K$ is the product of $k$ distinct factors in $\{\ell_1, \ell_2, \ell_3, \ell_4\}$ for each $k \in \{0, 1, 2, 3, 4\}$. Since $(a, b)$ is valid, it follows that $\{0, 1\} \cap \{a, b\} = \emptyset$. So, $|\{0, 2-a, 1-a, 1\}|=4$. Thus, if $K$ is the product of $k$ distinct factors in $\{\ell_1, \ell_2, \ell_3, \ell_4\}$ for some $k \in \{0, 1, 2, 3, 4\}$, then $K$ has $k$ distinct roots. Therefore, we can apply \tref{t:Weil} or \tref{t:quadWeil} to each term of the form $\sum_{x \in \F_q} \chi(\pm K(x))$ with $K \neq 1$. Doing this, we obtain the following inequalities.
		\[
		q-11q^{1/2}-6 \leq S \leq q+11q^{1/2}+6.
		\] 
		Combining this with \eref{e:S} gives the result.
		
		We now deal with the case where $b=-a$. 
		We will assume that $a \in \R_q$, similar arguments can be used to deal with the case where $a \in \N_q$. Let $z' = (-1, 1, -1, 1)$ and let $z^* = (1, -1, 1, -1)$. \lref{l:t2count} and \lref{l:t2-a} together imply that the number of Type Two transpositions in $\omega$ is $(e(\omega, z')+e(\omega, z^*))/2$. First, consider the sequence $z'$. The quantity $e(\omega, z')$ is equal to the cardinality of the set
		\[
		U = \left\{j \in \F_q : \{j, 1-a-j\} \subseteq \N_q \text{ and } \{-a^{-1}(j+a), a^{-1}(j-1)\} \subseteq \R_q\right\}.
		\]
		Using the same method as above we can determine that 
		\[
		\frac{q-11q^{1/2}-38}{16} \leq |U| \leq \frac{q+11q^{1/2}+38}{16}. 
		\]
		Similarly, we can show that $e(\omega, z^*)$ is between $(q-11q^{1/2}-38)/16$ and $(q+11q^{1/2}+38)/16$. The lemma follows.	
	\end{proof}
	
	The way that \tref{t:Weil} and \tref{t:quadWeil} were utilised in the proof of \lref{l:t22-aa/(2a-1)} is by now a standard technique (see, e.g.,~\cite{quadcycs, nu4, mnaq, numquad, maxnonass}). Throughout this paper, there will be many results whose proofs rely on this technqiue. For the sake of brevity, we will often not give all the details when we prove such results. 
	
	Finally, we must address the permutations $\omega = \omega[a, b] \in \Gamma$ for which $\omega_0$ or $\omega_a$ are transpositions. By combining \cite[Lemma $4.4$]{quadcycs} and \cite[Lemma $4.5$]{quadcycs} we obtain the following result.
	
	\begin{lem}\label{l:o0oa}
		Let $\omega = \omega[a, b] \in \Gamma$. If $\omega_0$ or $\omega_a$ is a transposition, then $b \in \{-a, 2-a, a/(2a-1)\}$.
	\end{lem}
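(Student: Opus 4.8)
The plan is to handle the cycles $\omega_0$ and $\omega_a$ directly via the explicit formula \eref{e:rp} for the row permutation $\omega = \omega[a,b] = r_{0,1}$ of $\L[a,b]$. Since $\omega_0$ is the cycle of $\omega$ through $0$ and $\omega_a$ is the cycle through $a$, I would first compute $\omega(0)$ and $\omega(a)$ explicitly using $\omega = \tau_1 \circ \varphi \circ \tau_{-1} \circ \varphi^{-1}$. Observe that $\varphi^{-1}(0) = 0$, so $\omega(0) = 1 + \varphi(-1)$, and the value $\varphi(-1)$ is $-a$ or $-b$ according to whether $-1 \in \R_q$ or $-1 \in \N_q$; similarly $\varphi^{-1}(a) = 1$ (since $\varphi(1) = a$ when $1 \in \R_q$, which holds), so $\omega(a) = 1 + \varphi(0) = 1$, giving $\omega(a) = 1$ unconditionally. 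Thus $\omega_a$ being a transposition is equivalent to $\omega(1) = a$, and I would similarly chase the value of $\omega(1)$ and of $\omega(\omega(0))$ through \eref{e:rp}.

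The substance of the argument is then: assuming $\omega_0$ (resp. $\omega_a$) is a transposition, write down the condition $\omega^2(0) = 0$ (resp. $\omega^2(a) = a$) as an identity in $\F_q$ involving $a$, $b$, and the quadratic characters of a few linear expressions in $a$ and $b$. Each choice of the signs $\chi(\cdot)$ that appear splits into a case, and in each case the equation $\omega^2(\cdot) = \cdot$ becomes a polynomial identity in $a,b$. I expect that solving these polynomial equations, and then discarding the cases that contradict the sign constraints or the validity of $(a,b)$, forces $b \in \{-a,\, 2-a,\, a/(2a-1)\}$ in every surviving case — exactly as the analogous computation does for Type Two transpositions in \lref{l:t2} and \lref{l:t2-a}. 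Indeed, since a transposition $(j,\omega(j))$ with $\{0,a\} \cap \{j,\omega(j)\} = \emptyset$ already forces $b$ into this set, the content of \lref{l:o0oa} is precisely to rule out the "boundary" transpositions meeting $\{0,a\}$ without the hypothesis of \lref{l:t2} being available for them; this is why one must argue separately and why the result is stated as a standalone lemma attributed to the combination of \cite[Lemma~$4.4$]{quadcycs} and \cite[Lemma~$4.5$]{quadcycs}.

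Concretely, I would structure it as: (1) reduce "$\omega_a$ is a transposition" to $\omega(1) = a$, compute $\omega(1)$ from \eref{e:rp} in the (at most two) cases determined by $\chi(\varphi^{-1}(1) - 1)$-type data, and extract the resulting relation between $a$ and $b$; (2) do the same for "$\omega_0$ is a transposition", reducing it to $\omega^2(0)=0$ with $\omega(0) = 1 + \varphi(-1)$, and splitting on $\chi(-1) = \pm 1$ (i.e. $q \bmod 4$) and on $\chi$ of the next expression that appears; (3) in each branch, simplify the polynomial equation and check that its solution set (intersected with the branch's sign conditions) lies in $\{-a, 2-a, a/(2a-1)\}$. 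Since the statement only asks for the inclusion $b \in \{-a,2-a,a/(2a-1)\}$ — not an iff — any spurious solutions that happen to land in this set are harmless, which simplifies step (3).

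The main obstacle will be step (3): the bookkeeping of cases (sign choices for several quadratic characters, plus the dichotomy $q \equiv 1$ vs $3 \bmod 4$) and verifying that the polynomial identities arising in each case genuinely collapse to the three prescribed values of $b$ rather than to some larger variety. This is routine but delicate algebra over $\F_q$, of the same flavour as the computations underlying \lref{l:t1r}, \lref{l:t1nr}, and \lref{l:t2}; following the templates of \cite[Lemmas~$4.4$ and~$4.5$]{quadcycs} keeps it manageable, and for brevity I would suppress the repetitive case-by-case simplifications, presenting one representative branch in detail and indicating that the others are analogous.
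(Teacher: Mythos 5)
Your plan is correct, and the calculations it defers do in fact collapse as you predict: from \eref{e:rp} one has $\omega(x)=1+\varphi(\varphi^{-1}(x)-1)$, so $\omega(a)=1$ unconditionally and ``$\omega_a$ is a transposition'' reduces to $\omega(1)=a$; splitting on $\chi(a)$ (which fixes whether $\varphi^{-1}(1)$ is $a^{-1}$ or $b^{-1}$) and on the character of $a^{-1}-1$ resp.\ $b^{-1}-1$, the four branches give $a=1$ (impossible), $b=-a$, $b=a/(2a-1)$, and $b=2-a$. Likewise $\omega(0)=1-a$ or $1-b$ according as $q\equiv 1$ or $3 \bmod 4$, and writing $\omega(\omega(0))=0$ as $\varphi^{-1}(\omega(0))-1=\varphi^{-1}(-1)$ and branching on the relevant characters again yields only $a=1$, $b=1$, or $b\in\{-a,2-a,a/(2a-1)\}$ (the degenerate sub-branches where $\varphi^{-1}(\omega(0))=1$, i.e.\ $\omega(0)=a$, make $\omega^2(0)=1\neq 0$ and so contribute nothing); since you only need the inclusion and not an equivalence, inconsistent branches are harmless exactly as you say. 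Where you differ from the paper is that the paper does not prove this lemma at all: it obtains it in one line by combining Lemmas $4.4$ and $4.5$ of~\cite{quadcycs}, which characterise when $\omega_0$ and $\omega_a$ are transpositions. Your route buys a self-contained, elementary verification (a few lines of character-splitting and linear algebra in $a,b$), at the cost of redoing bookkeeping that the cited source already carries out; the paper's citation is shorter but leaves the reader to consult~\cite{quadcycs} for the actual sign analysis. If you write yours up, do state explicitly that $a,b\notin\{0,1\}$ (validity) is what kills the spurious branches, and include the $\omega(0)=a$ degeneracy noted above for completeness.
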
 
	
	We can now prove the following result regarding the number of transpositions in a permutation in $\Gamma$.
	
	\begin{lem}\label{l:transcount}
		Let $\omega = \omega[a, b] \in \Gamma$ and let $M$ be the number of transpositions that $\omega$ contains. If $b \not\in \{-a, 2-a, a/(2a-1)\}$, then $M \in \{0, 2\}$. Otherwise,
		\[
		\frac{q-11q^{1/2}-38}{16} \leq M \leq \frac{q+11q^{1/2}+70}{16}.
		\]
	\end{lem}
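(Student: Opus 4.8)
\textbf{Proof proposal for \lref{l:transcount}.}

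The plan is to decompose the transpositions of $\omega = \omega[a,b] \in \Gamma$ into three disjoint classes and bound each one separately: (1) transpositions $(j, \omega(j))$ with $\{0,a\} \cap \{j, \omega(j)\} = \emptyset$, which split into Type One and Type Two transpositions, and (2) transpositions equal to $\omega_0$ or $\omega_a$ (i.e.\ where one of the cycles through $0$ or through $a$ is itself a $2$-cycle). Since $\omega$ is a derangement and $0 \neq a$ (because $(a,b)$ valid forces $a \notin \{0,1\}$), the cycle $\omega_0$ contributes at most one transposition, the cycle $\omega_a$ contributes at most one, and these account for \emph{all} transpositions meeting $\{0,a\}$; all remaining transpositions are disjoint from $\{0,a\}$ and hence are Type One or Type Two by the classification set up before \lref{l:t1count}.

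First I would handle the generic case $b \notin \{-a, 2-a, a/(2a-1)\}$. Here \lref{l:t2} gives that $\omega$ contains no Type Two transposition, and \lref{l:o0oa} gives that neither $\omega_0$ nor $\omega_a$ is a transposition. Hence every transposition of $\omega$ is Type One, and by \lref{l:t1} the count is $0$ or $2$. That settles the first bullet of the lemma exactly.

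For the remaining case $b \in \{-a, 2-a, a/(2a-1)\}$, I would write $M = M_1 + M_2 + M_0$ where $M_1 \in \{0, 2\}$ is the number of Type One transpositions (\lref{l:t1}), $M_2$ is the number of Type Two transpositions, bounded by $\frac{q - 11q^{1/2} - 38}{16} \le M_2 \le \frac{q + 11 q^{1/2} + 38}{16}$ via \lref{l:t22-aa/(2a-1)}, and $M_0 \in \{0, 1, 2\}$ is the number of transpositions among $\{\omega_0, \omega_a\}$. Adding the extreme values: the lower bound is $M \ge 0 + \frac{q - 11q^{1/2} - 38}{16} + 0 = \frac{q - 11q^{1/2} - 38}{16}$, and the upper bound is $M \le 2 + \frac{q + 11q^{1/2} + 38}{16} + 2 = \frac{q + 11q^{1/2} + 38 + 64}{16} = \frac{q + 11q^{1/2} + 102}{16}$. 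This is slightly weaker than the claimed $\frac{q + 11q^{1/2} + 70}{16}$, so the real work is to shave off the difference of $32/16 = 2$: one must show $M_1 + M_0 \le 4$ cannot be attained, i.e.\ that when $\omega_0$ or $\omega_a$ is a transposition this constrains the Type One count, or vice versa. I expect the main obstacle to be exactly this interaction: a careful case analysis (probably splitting on which of the three values $b$ takes and on $\chi(a)$, reusing the explicit conditions in \lref{l:t1r}, \lref{l:t1nr}, \lref{l:t2}, \lref{l:t2-a} together with the precise description of $\omega_0$ and $\omega_a$ from \cite[Lemmas~4.4 and~4.5]{quadcycs}) showing that whenever $M_0 \ge 1$ we have $M_1 = 0$, or that $M_0 \le 1$ always holds in the relevant cases, or that the $\pm 38$ in \lref{l:t22-aa/(2a-1)} already absorbs the boundary terms $\{0, a, \omega^{-1}(0), \omega^{-1}(a)\}$ so that $M_0$ is not an independent additive contribution. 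Once that bookkeeping is pinned down, the stated bounds follow by the same arithmetic as above.
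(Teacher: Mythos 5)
Your decomposition and the generic case are fine, and they match the paper's structure. The gap is in the case $b \in \{-a, 2-a, a/(2a-1)\}$: you stop at the upper bound $\frac{q+11q^{1/2}+102}{16}$ and explicitly defer the step needed to reach the stated $\frac{q+11q^{1/2}+70}{16}$, offering three conjectural routes without proving any of them. The missing idea is simpler and stronger than your first guess (``$M_0 \geq 1$ forces $M_1 = 0$''): if $b \in \{-a, 2-a, a/(2a-1)\}$, then $0 \in \{a+b,\, 2-a-b,\, 2ab-a-b\}$ (for $b=a/(2a-1)$ one checks $2ab-a-b=0$ directly). Every one of the eight conditions in \lref{l:t1r} and \lref{l:t1nr} requires a product having one of $a+b$, $a+b-2$, $2ab-a-b$ (up to sign) as a factor to lie in $\R_q$ or $\N_q$, hence to be nonzero; so $\omega$ satisfies no sequence in $T_1$ and, by \lref{l:t1count}, contains \emph{no} Type One transposition in this case, unconditionally. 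Thus $M \leq M_2 + M_0 \leq M_2 + 2$, and with the upper bound on $M_2$ from \lref{l:t22-aa/(2a-1)} this gives exactly the $+70$ numerator, while the lower bound is just $M \geq M_2$. Without this observation your argument proves only a weaker inequality, so as written the proof is incomplete; the interaction you flagged between $M_1$ and $M_0$ is not actually where the work lies.
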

	\begin{proof}
		If $b \not\in \{-a, 2-a, a/(2a-1)\}$, then the result follows by combining \lref{l:t1}, \lref{l:t22-aa/(2a-1)}, and \lref{l:o0oa}. Now suppose that $b \in \{-a, 2-a, a/(2a-1)\}$. Then $0 \in \{a+b, 2ab-a-b, 2-a-b\}$ and so \lref{l:t1r} and \lref{l:t1nr} imply that $\omega$ does not contain a Type One transposition. Therefore, $\omega$ contains at most two transpositions that are not of Type Two. The result now follows from \lref{l:t22-aa/(2a-1)}.
	\end{proof}
	
	We are now able to prove \tref{t:quadinterc}.
	
	\begin{proof}[Proof of \tref{t:quadinterc}]
		Since we have dealt with the case where $a=b$ in \sref{s:intro}, we will assume that $a \neq b$.
		We first note that $b \in \{-a, 2-a, a/(2a-1)\}$ if and only if $a \in \{-b, 2-b, b/(2b-1)\}$. 
		Let $\omega = \omega[a, b] \in \Gamma$. Let $M$ be the number of transpositions in $\omega$ and let $M'$ be the number of transpositions in $\omega[b, a]$. By \lref{l:quadrowperms}, if $q \equiv 3 \bmod 4$, then $N = Mq(q-1)/2$ and if $q \equiv 1 \bmod 4$, then $N = (M+M')q(q-1)/4$. 
		
		First, consider when $b \not\in \{-a, 2-a, a/(2a-1)\}$, so that any transposition in $\omega$ is of Type One and $\{M, M'\} \subseteq \{0, 2\}$. From~\cite[Lemma $4.1$]{quadcycs}, we know that $\omega$ contains a transposition if and only if \eref{e:n2cond} is satisfied, which is true if and only if
		\[
		(2ba-b-a)(b+a)(b-1) \in \R_q \text{ and } \{2(b+a-2)(b-1), 2b(b+a)\} \subseteq \N_q.
		\] 
		Therefore, $\omega$ contains a transposition if and only if $\omega[b, a]$ contains a transposition. Thus $M=M'$ and so $N \in \{0, q(q-1)\}$. The remaining claims for this case follow from \tref{t:n2}.
		
		Now consider when $b \in \{-a, 2-a, a/(2a-1)\}$. By \lref{l:transcount}, both $M$ and $M'$ lie between $(q-11q^{1/2}-38)/16$ and $(q+11q^{1/2}+70)/16$, which proves the required bound on $N$. 
	\end{proof}
	
	\section{Autotopisms with components in $\mathbf{\textbf{AGL}_d(p)}$}\label{s:prelim}
	
	We now turn our attention to proving \tref{t:quadatp}. Some of the arguments that we use in \sref{s:prelim}, \sref{s:not2trans}, and \sref{s:2trans} to help prove \tref{t:quadatp} do not work for `small' values of $q$. Since \tref{t:quadatp} is easy to verify using a computer if $q \leq 23$, we will assume, for \sref{s:prelim}, \sref{s:not2trans}, and \sref{s:2trans}, that $q > 23$. For these sections, we will also fix the following definitions and notation. Let $(a, b) \in \F_q^2$ be valid with $a \neq b$, let $\Q = \Q_{a, b}$, let $* = *_{a, b}$, let $\L = \L[a, b]$, and let $\varphi = \varphi[a, b]$. 

	The goal of this section is to prove \lref{l:affatp} below, which states that any autotopism of $\Q$ whose components are all elements of $\AGL_d(p)$ is an automorphism of $\Q$. \lref{l:affatp} is a key ingredient in the proof of \tref{t:quadatp}. 
	
	\begin{lem}\label{l:affatp}
		Let $(\alpha, \beta, \gamma) \in \Atp(\Q)$ and suppose that $\{\alpha, \beta, \gamma\} \subseteq \AGL_d(p)$. Then $\alpha=\beta=\gamma$.
	\end{lem}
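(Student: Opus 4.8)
The plan is to write each component as an affine map and use the defining identity $\alpha(x)*\beta(y) = \gamma(x*y)$ to force the linear parts to coincide and the translation parts to be compatible. Write $\alpha = \tau_u \circ \sigma_1$, $\beta = \tau_v \circ \sigma_2$, $\gamma = \tau_w \circ \sigma_3$ with $\sigma_i \in \GL_d(p)$ and $u, v, w \in \F_q$. Substituting into the autotopism identity and using $x*y = x + \varphi(y-x)$, we obtain for all $\{x,y\} \subseteq \F_q$
\[
\sigma_1(x) + u + \varphi\big(\sigma_2(y) + v - \sigma_1(x) - u\big) = \sigma_3(x*y) + w = \sigma_3\big(x + \varphi(y-x)\big) + w.
\]
The first thing I would do is exploit the scaling behaviour of $\varphi$: since $\varphi(kx) = k\varphi(x)$ for any square $k$ (as $\chi(kx) = \chi(x)$), and $\varphi(-x) = -\varphi(x)$ when $-1 \in \R_q$, the map $\varphi$ is ``homogeneous of degree one'' in a piecewise sense. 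This should let me reduce to the case where the translation parts are trivial by pre- and post-composing with translations, or at least to isolate the purely linear identity $\sigma_1(x) + \varphi(\sigma_2(y) - \sigma_1(x)) = \sigma_3(x + \varphi(y-x))$ that must hold on a large (cofinite) subset once the translations are accounted for.

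The key step is then to show $\sigma_1 = \sigma_2 = \sigma_3$ from such a linear identity. Setting $x = 0$ gives $\varphi(\sigma_2(y)) = \sigma_3(\varphi(y))$ for all $y$, i.e. $\sigma_3 \circ \varphi = \varphi \circ \sigma_2$; setting $y = 0$ gives $\sigma_1(x) + \varphi(-\sigma_1(x)) = \sigma_3(x + \varphi(-x))$, and since $1 + \varphi(-1) = 1 - \varphi(1) \in \{1-a, 1-b\}$ is a fixed nonzero scalar on each half of $\F_q$, this relates $\sigma_1$ and $\sigma_3$. Comparing the behaviour of $\sigma_2$ and $\sigma_3$ on $\R_q$ versus $\N_q$ via $\varphi$, and using that an additive permutation commuting with multiplication by all squares on a cofinite set is already quite rigid, I expect to deduce $\sigma_2 = \sigma_3$ and $\sigma_1 = \sigma_3$, hence $\alpha = \beta = \gamma$ after matching translations. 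An alternative, cleaner route: an autotopism with all components affine maps has all three components fixing a common point after a translation normalisation, and a principal autotopism (one fixing $(0,0,0)$) of $\Q$ must preserve the partition of nonzero differences into $\R_q$ and $\N_q$ — this is essentially the content of Theorem~\ref{t:quadaut}'s identification of $\Aut(\Q)$ with a subgroup of $\AGamma2L_1$, and one shows the linear parts must agree to respect the two-branch definition of $*$.

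The main obstacle I anticipate is handling the translation parts carefully: $\varphi$ is not additive, so one cannot simply absorb $u, v, w$ by linearity, and the identity $\varphi(z + c) = \varphi(z) + (\text{something})$ fails in general. The trick will be to evaluate the identity at enough pairs $(x,y)$ — in particular pairs with $y - x$ ranging over both $\R_q$ and $\N_q$, and pairs where $\sigma_2(y) + v - \sigma_1(x) - u$ lands in a prescribed residue class — to pin down the constants. Counting arguments (a cofinite set of $(x,y)$ satisfies the "clean" linear identity) plus the rigidity of additive maps should suffice; I would not expect to need the intercalate count of Theorem~\ref{t:quadinterc} here, as this lemma is purely about the affine case and is meant as input to the harder $2$-transitive analysis in later sections.
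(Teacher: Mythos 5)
There is a genuine gap: your proposal never actually deals with the translation parts, and that is the heart of this lemma. The only automorphisms available for normalisation act diagonally (e.g.\ the translations $\tau_c$), so you can arrange $\alpha(0)=0$, but you cannot ``pre- and post-compose with translations'' to make all three translation parts trivial, nor is it true that the clean linear identity then holds on a cofinite set of pairs $(x,y)$ --- if $\beta=\tau_v\circ g$ with $v\neq 0$, the identity is distorted on a constant fraction of pairs (depending on the characters of $g(\cdot)+v$), not on finitely many. In the paper's proof, after normalising $\alpha(0)=0$ one gets $\gamma\circ\varphi=\varphi\circ\beta$, writes $\gamma=\tau_u\circ f$, $\beta=\tau_v\circ g$, deduces $u=\varphi(v)$, and then the bulk of the work is an eight-case analysis of $f(x+y)=f(x)+f(y)$ for $x,y,x+y$ of prescribed character, combined with Weil-bound counting, showing that $v\neq 0$ would force $\chi(g(x)+v)=1$ for at least $q-5>(q-1)/2$ values of $x$, contradicting that $\tau_v\circ g$ is a permutation. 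Your sketch (``counting arguments plus rigidity of additive maps should suffice'') gestures at this but supplies none of it, and your ``alternative, cleaner route'' via Theorem~\ref{t:quadaut} is circular: that theorem describes automorphisms, while the point at issue is precisely whether an affine autotopism must be diagonal.

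Once everything is linear your outline is closer to viable, though still only a sketch: the relation $\sigma_3\circ\varphi=\varphi\circ\sigma_2$ and the $y=0$ specialisation by themselves do not immediately give $\sigma_1=\sigma_2=\sigma_3$, because the character conditions only give piecewise scalar relations on quarters of $\F_q$ and one must argue (again with character-sum estimates) that the additive maps agree everywhere. The paper instead proves $\alpha=\beta$ by evaluating $\gamma(z)$ in two ways, choosing (via Weil) auxiliary elements so that both branches of $\varphi$ occur, obtaining $\gamma(z)=(1-a)\alpha(z)+a\beta(z)=(1-b)\alpha(z)+b\beta(z)$ and hence $\alpha=\beta$ since $a\neq b$, and then invokes an external lemma to conclude $\alpha=\beta=\gamma$. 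So the overall verdict: right starting equation and right anticipated obstacle, but the key argument eliminating the translations --- the main content of the proof --- is missing.
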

	\begin{proof}		
		Since $\gamma(x*y) = \alpha(x) * \beta(y)$, it follows that
		\begin{equation}\label{e:isotopyeqn}
			\gamma(x+\varphi(y-x)) = \alpha(x) +\varphi(\beta(y)-\alpha(x)).
		\end{equation} 
		By applying the automorphism $\tau_{-\alpha(0)}$ of $\Q$ to $(\alpha, \beta, \gamma)$, we may assume that $\alpha$ fixes $0$. Thus, $\gamma \circ \varphi = \varphi \circ \beta$ by \eref{e:isotopyeqn}. Since $\{\beta, \gamma\} \subseteq \AGL_d(p)$, there exist $\{f, g\} \subseteq \GL_d(p)$ and $\{u, v\} \subseteq \F_q$ such that $\gamma = \tau_u \circ f$ and $\beta = \tau_v \circ g$. Our first goal is to show that $u=v=0$. 
		
		Since $\gamma \circ \varphi = \varphi \circ \beta$, for all $x \in \F_q$ we have $f(\varphi(x))+u=\varphi(g(x)+v)$ and so $f(x) = \varphi(g(\varphi^{-1}(x))+v)-u$. Setting $x=0$ shows that $u=\varphi(v)$. Suppose that $\{x, y\} \subseteq \F_q$ is such that $\chi(x) = \chi(y) = \chi(x+y) = \chi(a)$. Then the value of $f(x+y)$ is
		\[
		\begin{cases}
			ag(a^{-1}x)+ag(a^{-1}y)+av-\varphi(v) & \text{if } \chi(g(a^{-1}x)+g(a^{-1}y)+v) = 1,\\
			bg(a^{-1}x)+bg(a^{-1}y)+bv-\varphi(v) & \chi(g(a^{-1}x)+g(a^{-1}y)+v) \neq 1,
		\end{cases}
		\]
		and the value of $f(x)+f(y)$ is
		\[
		\begin{cases}
			ag(a^{-1}x)+ag(a^{-1}y)+2av-2\varphi(v) & \text{if } \chi(g(a^{-1}x)+v)= 1 =\chi(g(a^{-1}y)+v), \\
			ag(a^{-1}x)+bg(a^{-1}y)+v(a+b)-2\varphi(v) & \text{if } \chi(g(a^{-1}x)+v)= 1 \neq \chi(g(a^{-1}y)+v), \\
			bg(a^{-1}x)+ag(a^{-1}y)+v(a+b)-2\varphi(v) & \text{if } \chi(g(a^{-1}x)+v) \neq 1 = \chi(g(a^{-1}y)+v), \\
			bg(a^{-1}x)+bg(a^{-1}y)+2bv-2\varphi(v) & \text{if } 1 \not\in \{\chi(g(a^{-1}x)+v), \chi(g(a^{-1}y)+v)\}.
		\end{cases}
		\]
		Since $f(x+y) = f(x)+f(y)$, we have eight possible cases to consider. 
		\begin{enumerate}[\text{Case} 1:]
			\item $\chi(g(a^{-1}x)+g(a^{-1}y)+v) = 1$, $\chi(g(a^{-1}x)+v) = 1$, and $\chi(g(a^{-1}y)+v) = 1$. Then $av=\varphi(v)$, which implies that $v \in \R_q \cup \{0\}$.
			\item $\chi(g(a^{-1}x)+g(a^{-1}y)+v) = 1$, $\chi(g(a^{-1}x)+v) = 1$, and $\chi(g(a^{-1}y)+v) \neq 1$. Then $y=ag^{-1}((bv-\varphi(v))(a-b)^{-1})$.
			\item $\chi(g(a^{-1}x)+g(a^{-1}y)+v) = 1$, $\chi(g(a^{-1}x)+v) \neq 1$, and $\chi(g(a^{-1}y)+v) = 1$. Then $x=ag^{-1}((bv-\varphi(v))(a-b)^{-1})$.
			\item $\chi(g(a^{-1}x)+g(a^{-1}y)+v) = 1$, $\chi(g(a^{-1}x)+v) \neq 1$, and $\chi(g(a^{-1}y)+v) \neq 1$. Then $y=ag^{-1}(((b-a)g(a^{-1}x)+v(2b-a)-\varphi(v))(a-b)^{-1})$. 
			\item $\chi(g(a^{-1}x)+g(a^{-1}y)+v) \neq 1$, $\chi(g(a^{-1}x)+v) = 1$, and $\chi(g(a^{-1}y)+v) = 1$. Then $y=ag^{-1}(((a-b)g(a^{-1}x)+v(2a-b)-\varphi(v))(b-a)^{-1})$.
			\item $\chi(g(a^{-1}x)+g(a^{-1}y)+v) \neq 1$, $\chi(g(a^{-1}x)+v) = 1$, and $\chi(g(a^{-1}y)+v) \neq 1$. Then $x=ag^{-1}((av-\varphi(v))(b-a)^{-1})$.
			\item $\chi(g(a^{-1}x)+g(a^{-1}y)+v) \neq 1$, $\chi(g(a^{-1}x)+v) \neq 1$, and $\chi(g(a^{-1}y)+v) = 1$. Then $y=ag^{-1}((av-\varphi(v))(b-a)^{-1})$.
			\item $\chi(g(a^{-1}x)+g(a^{-1}y)+v) \neq 1$, $\chi(g(a^{-1}x)+v) \neq 1$, and $\chi(g(a^{-1}y)+v) \neq 1$. Then $bv = \varphi(v)$, which implies that $v \in \N_q \cup \{0\}$. 
		\end{enumerate}
		Let $X$ denote the set of all elements $x \in \F_q \setminus \{ag^{-1}((bv-\varphi(v))(a-b)^{-1}), ag^{-1}((av-\varphi(v))(b-a)^{-1})\}$ such that $\chi(x)=\chi(a)$. For $x \in X$, let $Y(x)$ denote the set of $y \in \F_q$ such that $\chi(y) = \chi(x+y) = \chi(a)$ and 
		\[
		\begin{aligned}
			y \not\in \{&ag^{-1}((bv-\varphi(v))(a-b)^{-1}), ag^{-1}(((b-a)g(a^{-1}x)+v(2b-a)-\varphi(v))(a-b)^{-1}), \\& ag^{-1}(((a-b)g(a^{-1}x)+v(2a-b)-\varphi(v))(b-a)^{-1}), ag^{-1}((av-\varphi(v))(b-a)^{-1})\}.
		\end{aligned}
		\] 
		Suppose, for a contradiction, that $v \neq 0$. First, consider when $v \in \R_q$. By construction, if $x \in X$ and $y \in Y(x)$, then Case $1$ must hold. In particular, $\chi(g(a^{-1}x)+g(a^{-1}y)+v)=1 = \chi(g(a^{-1}x)+v) = \chi(g(a^{-1}y)+v)$. We can use \tref{t:Weil} and \tref{t:quadWeil} in the usual way to show that $Y(x) \neq \emptyset$ for all $x \in X$. This conclusion relies on the fact that $q > 23$. It follows that $\chi(g(a^{-1}x)+v) = 1$ for every $x \in X$. 
	
		Define $X'$ to be the set of elements $x \in \F_q \setminus \{bg^{-1}((bv-\varphi(v))(a-b)^{-1}), bg^{-1}((av-\varphi(v))(b-a)^{-1})\}$ such that $\chi(x) = -\chi(a)$. By repeating the above arguments, we can show that $\chi(g(b^{-1}x)+v) = 1$ for every $x \in X'$. Note that if $x \in X$ and $x' \in X'$, then $a^{-1}x \neq b^{-1}x'$, since $\chi(a^{-1}x) = 1$ and $\chi(b^{-1}x') = -1$. Hence, there are at least $2((q-1)/2-2) = q-5$ elements in $\F_q$ such that $\chi(g(x)+v) = 1$. However, the map $\tau_v \circ g$ is a permutation of $\F_q$, so there are exactly $(q-1)/2$ elements $x \in \F_q$ such that $\chi(g(x)+v) = 1$. This is a contradiction, since $q-5 > (q-1)/2$ because $q > 23$. Similar arguments can be used to reach a contradiction if we assume that $v \in \N_q$. Therefore, $v=0$ and so $u=\varphi(v) = 0$ also.
		
		We may now assume that $\{\alpha, \beta, \gamma\} \subseteq \GL_d(p)$. We will prove that $\alpha=\beta$. The lemma will then follow from~\cite[Lemma $5$]{symm1facs}. Let $z \in \F_q$. By \eref{e:isotopyeqn}, for any $\{x, y\} \subseteq \F_q$,
		\begin{align}
				\gamma(x+\varphi(y-x))+\gamma(z) &= \gamma(x+z+\varphi(y+z-(x+z))) \nonumber\\
				&= \alpha(x+z)+\varphi(\beta(y+z)-\alpha(x+z)) \nonumber\\
				&= \alpha(x)+\alpha(z)+\varphi(\beta(y)+\beta(z)-\alpha(x)-\alpha(z))\label{e:gam1}.
		\end{align}
		Let $k \in \F_q$ be such that $\chi(k) = \chi(\beta(z)-\alpha(z)+k) = 1$. Since $q > 23$, such a $k \in \F_q$ can be shown to exist by using \tref{t:Weil} and \tref{t:quadWeil} in the usual way. Let $\{x, y\} \subseteq \F_q$ be such that $\beta(y)-\alpha(x)=k$. From \eref{e:gam1},
		\[
		\begin{aligned}
			\gamma(x+\varphi(y-x))+\gamma(z) &= \alpha(x)+\alpha(z)+a(\beta(y)+\beta(z)-\alpha(x)-\alpha(z)) \\
			&= \alpha(x)+\varphi(\beta(y)-\alpha(x)) + \alpha(z) + a(\beta(z)-\alpha(z)) \\
			&= \gamma(x+\varphi(y-x)) + \alpha(z)(1-a)+a\beta(z).
		\end{aligned}
		\]
		Hence, $\gamma(z) = \alpha(z)(1-a)+a\beta(z)$. Now let $\{x, y\} \subseteq \F_q$ be such that $\chi(\beta(y)-\alpha(x)) = \chi(\beta(y)-\alpha(x)+\beta(z)-\alpha(z))=-1$. Using the same argument as above, we deduce that $\gamma(z) = \alpha(z)(1-b)+b\beta(z)$. Therefore, $(b-a)(\alpha(z)-\beta(z))=0$ and so $\alpha(z)=\beta(z)$, since $a \neq b$. Since $z$ was arbitrary, it follows that $\alpha=\beta$ and we are done.
	\end{proof}
	
	\section{Not $\mathbf{2}$-transitive autotopism projections}\label{s:not2trans}
	
	The goal of this section is to prove the following lemma.
	
	\begin{lem}\label{l:not2trans}
		Let $i \in \{1, 2, 3\}$. If $\Atp_i(\Q)$ is not $2$-transitive, then $\Atp_i(\Q) \leq \AGL_d(p)$.
	\end{lem}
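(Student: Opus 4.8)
The plan is to pin down $\Atp_i(\Q)$ using the large subgroup of it that comes from genuine automorphisms of $\Q$, and then to use the non-$2$-transitivity hypothesis to force every element of $\Atp_i(\Q)$ to respect the quadratic-residue structure on $\F_q$. The starting point is that $\Atp_i(\Q)$ contains the group $H = \{\tau_c \circ \lambda_\nu : c \in \F_q,\ \nu \in \R_q\}$ of order $q(q-1)/2$: a direct computation shows that $\tau_c$ and $\lambda_\nu$ are automorphisms of $\Q$ for every $c \in \F_q$ and $\nu \in \R_q$ (alternatively this follows from \tref{t:quadaut}), so $(\tau_c,\tau_c,\tau_c)$ and $(\lambda_\nu,\lambda_\nu,\lambda_\nu)$ lie in $\Atp(\Q)$, and hence $H \leq \Atp_i(\Q)$ for each $i \in \{1,2,3\}$. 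Acting on $\F_q$, the group $H$ is transitive, and its point stabiliser $\{\lambda_\nu : \nu \in \R_q\}$ has exactly the three orbits $\{0\}$, $\R_q$, and $\N_q$; thus $H$ is a rank-$3$ permutation group whose two non-diagonal orbitals are $\Gamma_1 = \{(x,y) : y-x \in \R_q\}$ and $\Gamma_2 = \{(x,y) : y-x \in \N_q\}$.

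Now put $G = \Atp_i(\Q)$ and suppose $G$ is not $2$-transitive. Since $H \leq G$, every orbital of $G$ is a union of orbitals of $H$, so $G$ has rank at most $3$; as $G$ is transitive (it contains $\T_q$) but not $2$-transitive, it has rank exactly $3$, and so its two non-diagonal orbitals must be $\Gamma_1$ and $\Gamma_2$. In particular $\Gamma_1$ is $G$-invariant, which is to say that every $g \in G$ satisfies $\chi(g(y) - g(x)) = \chi(y-x)$ for all distinct $x, y \in \F_q$. Equivalently, each such $g$ is an automorphism of the Paley graph on $\F_q$ when $q \equiv 1 \bmod 4$, and of the Paley tournament on $\F_q$ when $q \equiv 3 \bmod 4$.

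To finish, it suffices to quote the classical fact that, when $q > 23$ (which excludes the single exceptional order $q = 9$), the automorphism group of each of these two structures consists precisely of the maps $x \mapsto \nu\,\theta(x) + \mu$ with $\nu \in \R_q$, $\mu \in \F_q$, and $\theta \in \Aut(\F_q)$. Every map of this form is $\F_p$-additive, being the composition of an $\F_p$-linear map with a translation, and hence lies in $\AGL_d(p)$. Therefore $G = \Atp_i(\Q) \leq \AGL_d(p)$, completing the proof.

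The rank argument is elementary; the one real ingredient is the last step. If one prefers not to cite the determination of the automorphism groups of Paley graphs and tournaments, one can reprove the portion needed here directly: after composing a given $g \in G$ with a suitable element of $H$ we may assume $g$ fixes $0$ and $1$, and then the identity $\chi(g(y) - g(x)) = \chi(y-x)$, combined with the character-sum bounds of \tref{t:Weil} and \tref{t:quadWeil} used in the now-standard way, forces $g$ to be additive. It is in this step that the hypothesis $q > 23$ is used, to control the small cases.
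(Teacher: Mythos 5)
Your proposal is correct and is essentially the paper's argument in different packaging: the paper also reduces to showing that every element of $\Atp_i(\Q)$ preserves the quadratic character of differences (via an explicit criterion, \lref{l:atp12trans}, and a normalisation fixing $0$ and $1$, rather than your rank-$3$ orbital argument with $H$), and then concludes with Carlitz's theorem, which is exactly the Paley graph/tournament automorphism classification you quote. The only cosmetic caveat is that your hedge about $q=9$ being exceptional is unnecessary (and moot, since $q>23$ here).
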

	
	We will need the following simple result.
	
	\begin{lem}\label{l:atp12trans}
		Let $i \in \{1, 2, 3\}$ and suppose that there exists some $\delta \in \Atp_i(\Q)$ satisfying $\delta(0)=0$ and $\delta(1) \in \N_q$. Then $\Atp_i(\Q)$ is $2$-transitive. 
	\end{lem}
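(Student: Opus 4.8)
The plan is to show directly that $\Atp_i(\Q)$ acts transitively on $\F_q$ and that a point stabiliser acts transitively on the remaining points. First I would record the routine fact that $\Atp_i(\Q)$ is a subgroup of $\Sym(\F_q)$, being the image of the group homomorphism $\Atp(\Q) \to \Sym(\F_q)$ that projects an autotopism onto its $i$-th component. Since $(\theta, \theta, \theta) \in \Atp(\Q)$ for every $\theta \in \Aut(\Q)$, this shows $\Aut(\Q) \subseteq \Atp_i(\Q)$ (identifying each automorphism with its single component). By \tref{t:quadaut}, together with the standing assumptions $a \neq b$ and $q > 23$ (so that neither the case $a = b$ nor the case $q = 7$ of that theorem applies), $\Aut(\Q)$ contains $\AGamma2L_1(\F_q \mid \K)$, and hence contains every map of the form $x \mapsto \nu x + \mu$ with $\nu \in \R_q$ and $\mu \in \F_q$ (take the identity element of $\Gal(\F_q\mid\K)$). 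In particular $\T_q \subseteq \Atp_i(\Q)$ and $\lambda_\nu \in \Atp_i(\Q)$ for every $\nu \in \R_q$.

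Because $\T_q \subseteq \Atp_i(\Q)$, the group $\Atp_i(\Q)$ is transitive on $\F_q$, so to prove $2$-transitivity it suffices to show that the stabiliser $H$ of $0$ in $\Atp_i(\Q)$ is transitive on $\F_q \setminus \{0\}$. The subgroup $\{\lambda_\nu : \nu \in \R_q\}$ lies in $H$ and has exactly two orbits on $\F_q \setminus \{0\}$, namely $\R_q$ and $\N_q$. The hypothesis $\delta(0) = 0$ gives $\delta \in H$, and $\delta(1) \in \N_q$ by assumption. Hence the $H$-orbit of $1$ contains $\{\nu \cdot 1 : \nu \in \R_q\} = \R_q$ as well as the element $\delta(1)$, and since $\delta(1) \in \N_q$ and $\{\lambda_\nu : \nu \in \R_q\} \leq H$ it also contains $\{\nu\,\delta(1) : \nu \in \R_q\} = \N_q$. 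Therefore the $H$-orbit of $1$ is all of $\F_q \setminus \{0\}$, so $H$ is transitive there, and $\Atp_i(\Q)$ is $2$-transitive.

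There is no genuine obstacle in this argument; the only point requiring care is the containment $\T_q \cup \{\lambda_\nu : \nu \in \R_q\} \subseteq \Atp_i(\Q)$, which is where \tref{t:quadaut} is invoked (alternatively, one can check directly that translations and multiplications by quadratic residues are automorphisms of $\Q$, using that $\varphi$ commutes with $\lambda_\nu$ for $\nu \in \R_q$).
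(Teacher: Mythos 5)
Your proof is correct and uses essentially the same ingredients as the paper: the paper's proof simply exhibits, for each pair $(x,y)$ with $x \neq y$, an explicit element $\tau_x \circ \lambda_{y-x}$ or $\tau_x \circ \lambda_{(y-x)\delta(1)^{-1}} \circ \delta$ of $\Atp_i(\Q)$ sending $(0,1)$ to $(x,y)$, which is the same combination of translations, square multiplications, and $\delta$ that you use via the ``transitive plus transitive point stabiliser'' criterion. Your justification of the containment $\T_q \cup \{\lambda_\nu : \nu \in \R_q\} \subseteq \Atp_i(\Q)$ is sound (and is left implicit in the paper), so there is nothing to fix.
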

	\begin{proof}
		Let $(x, y) \in \F_q^2$ with $x \neq y$. We will show that there is some $\eta \in \Atp_i(\Q)$ such that $\eta(0)=x$ and $\eta(1)=y$. If $\chi(y-x)=1$, then set $\eta = \tau_x \circ \lambda_{y-x}$. If $\chi(y-x)=-1$, then set $\eta = \tau_x \circ \lambda_{(y-x)\delta(1)^{-1}} \circ \delta$. 
	\end{proof}
	
	We are now ready to prove \lref{l:not2trans}.
	
	\begin{proof}[Proof of \lref{l:not2trans}]
Suppose that $\Atp_i(\Q)$ is not $2$-transitive. Let $\alpha_1 \in \Atp_i(\Q)$ and define $\alpha = \lambda_{((\tau_{-\alpha_1(0)} \circ \alpha_1)(1))^{-1}} \circ \tau_{-\alpha_1(0)} \circ \alpha_1$. Note that $\tau_{-\alpha_1(0)} \circ \alpha_1(1) \in \R_q$ by \lref{l:atp12trans} and so $\alpha \in \Atp_i(\Q)$. Also note that $\alpha(0)=0$ and $\alpha(1)=1$. Suppose that there is some $\{u, v\} \subseteq \F_q$ such that $\chi(v-u) \neq \chi(\alpha(v)-\alpha(u))$. We will assume that $v-u \in \R_q$ and $\alpha(v)-\alpha(u) \in \N_q$. To deal with the case where $v-u \in \N_q$ and $\alpha(v)-\alpha(u) \in \R_q$, we can apply the following arguments to $\alpha^{-1} \in \Atp_i(\Q)$. Let $c \in \N_q$ and let $\alpha_2 = \lambda_{c(\alpha(v)-\alpha(u))^{-1}} \circ \tau_{-\alpha(u)} \circ \alpha \circ \tau_u \circ \lambda_{v-u} \in \Atp_i(\Q)$. Then $\alpha_2(0)=0$ and $\alpha_2(1)=c$, which contradicts \lref{l:atp12trans}. Thus, $\chi(v-u) = \chi(\alpha(v)-\alpha(u))$ for every $\{u, v\} \subseteq \F_q$. We can now apply a theorem of Carlitz~\cite{Carlitz} to conclude that $\alpha \in \Aut(\F_q)$. Thus, $\alpha_1 \in \AGL_d(p)$. Since $\alpha_1 \in \Atp_i(\Q)$ was arbitrary, $\Atp_i(\Q) \leq \AGL_d(p)$. 
	\end{proof}
	
	\section{$\mathbf{2}$-transitive autotopism projections}\label{s:2trans}
	
	The goal of this section is to prove the following lemma.
	
	\begin{lem}\label{l:2trans}
		Let $i \in \{1, 2, 3\}$. If $\Atp_i(\Q)$ is $2$-transitive, then $\Atp_i(\Q) \leq \AGL_d(p)$.
	\end{lem}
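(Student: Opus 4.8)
The plan is to invoke the classification of finite $2$-transitive groups. Put $G = \Atp_i(\Q)$ and suppose $G$ is $2$-transitive; the argument is the same for each $i$ (with rows replaced by columns or symbols, equivalently passing to a parastrophe of $\Q$), so I will describe it for $i = 1$. Since every map $x \mapsto \nu x + \mu$ with $\nu \in \R_q$ and $\mu \in \F_q$ is an automorphism of $\Q$, and an automorphism $(\theta,\theta,\theta)$ contributes $\theta$ to each projection, $G$ contains the group $A = \{x \mapsto \nu x + \mu : \nu \in \R_q,\ \mu \in \F_q\}$ of order $q(q-1)/2$, whose point stabilizer $A_0$ is the cyclic group $\R_q$ of order $(q-1)/2$, semiregular on $\F_q\setminus\{0\}$ with exactly two orbits. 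By Burnside's theorem, $G$ either has a regular normal elementary abelian subgroup---necessarily of order $q$, so that identifying $\F_q$ with this subgroup puts $G_0$ inside $\GL_d(p)$ and hence $G \leq \AGL_d(p)$, as wanted---or $G$ is almost simple, and this is the case that must be excluded.

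Using the classification of $2$-transitive groups (via CFSG), and the fact that the degree $q = p^d$ is an odd prime power with $q > 23$, the socle of an almost simple $2$-transitive $G$ is forced to be either $\Alt(\F_q)$ or a group $\PSL_m(r)$ in its action on the $(r^m-1)/(r-1) = q$ points of $\mathrm{PG}(m-1,r)$: a short check eliminates the symplectic, unitary, Suzuki, Ree, and sporadic families, since each has degree that is even, or composite, or at most $23$ when it is an odd prime power.

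To rule out $\Alt(\F_q) \leq G$, I would argue as follows. Such a $G$ contains a $3$-cycle $\alpha$; let $T$ be its $3$-element support and pick $(\alpha,\beta,\gamma) \in \Atp(\Q)$. Writing $\rho_x$ for the permutation $y \mapsto \L_{x,y} = x*y$, the identity $\gamma(\L_{x,y}) = \L_{\alpha(x),\beta(y)}$ gives $\gamma \circ \rho_x = \rho_x \circ \beta$ whenever $x \in \F_q\setminus T$, whence $\rho_{x'}^{-1}\rho_x$ centralizes $\beta$ for all $x,x' \in \F_q\setminus T$. Conjugating by $\rho_{x_0}$ for a fixed $x_0 \in \F_q\setminus T$ identifies $H_T := \langle \rho_{x'}^{-1}\rho_x : x,x' \in \F_q\setminus T\rangle$ with the subgroup $\langle r_{x_0,x} : x \in \F_q\setminus T\rangle$ of the group generated by the row permutations of $\L$, and a similar computation shows that the groups $H_T$, as $T$ ranges over $3$-sets, are pairwise conjugate in $\Sym(\F_q)$. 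I would then show, using the cycle-structure results for row permutations of quadratic Latin squares developed in \sref{s:interc} together with Jordan-type theorems on primitive permutation groups, that for a suitable $T$ (hence for all) $H_T$ is primitive and non-regular, so its centralizer in $\Sym(\F_q)$ is trivial. Then $\beta = \Id$, so $\gamma(x*y) = \alpha(x)*y$ for all $x,y$; taking $x \in \F_q\setminus T$ and using that row $x$ of $\L$ contains every symbol forces $\gamma = \Id$; and then $x*y = \alpha(x)*y$ together with cancellation gives $\alpha = \Id$, contradicting that $\alpha$ is a $3$-cycle.

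It remains to eliminate the projective case, in which $G_0$ must contain the cyclic group $\R_q$ of order $(q-1)/2$ acting semiregularly with exactly two orbits on the $q-1$ non-fixed points. For $m = 2$ we have $q = r + 1$, which (as $q$ is an odd prime power) forces $r = 2^f$ with $q$ a Fermat prime; one checks that a point stabilizer of $\PGammaL_2(r)$ has no cyclic subgroup of order $(q-1)/2 = 2^{f-1}$ once $f = 2^k$ with $k \geq 3$, i.e. whenever $q > 23$ in this family, which is a contradiction. For $m \geq 3$, the equation $(r^m-1)/(r-1) = p^d$ confines $(m,r)$ to a thin set (by elementary number theory, e.g. Mihailescu's theorem), and in each case I would show that no element of a point stabilizer of $\PGammaL_m(r)$ on $\mathrm{PG}(m-1,r)$ has both the order $(q-1)/2$ and the required two-orbit pattern, checking the finitely many smallest degrees by computer. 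This leaves no almost simple possibility, so $G \leq \AGL_d(p)$. I expect the two main obstacles to be (i) establishing that the row-permutation group $H_T$ above has trivial centralizer---essentially, controlling the multiplication group of $\Q$ through the cycle-structure analysis of \sref{s:interc}---and (ii) the uniform treatment of the projective case for $m \geq 3$, where a pure order estimate is too weak and a finite computation does not suffice on its own.
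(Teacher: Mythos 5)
Your plan breaks down at the projective case with $r=2$, which is exactly the case the paper has to work hardest on. You propose to eliminate the almost simple projective possibility by showing that no element of a point stabilizer of $\PGammaL_m(r)$ has order $(q-1)/2$ and two semiregular orbits on the non-fixed points; but for $r=2$ such elements exist. In $\PGL_u(2)=\GL_u(2)$ acting on the $2^u-1$ nonzero vectors, the block-diagonal element formed from a Singer cycle of $\GL_{u-1}(2)$ and the identity on a complementary line fixes exactly one nonzero vector, lies in that vector's stabilizer, has order $2^{u-1}-1=(q-1)/2$, and has precisely two regular orbits of length $(q-1)/2$ on the remaining points --- the same pattern as $\lambda_{\mu^2}$. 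So for Mersenne-prime degrees your criterion produces no contradiction, and a finite computer check cannot close an infinite family; this is why the paper notes before \lref{l:atpelord} that the order argument cannot cover $v=2$ and instead exhibits in $\PGL_u(2)$ a product of $(q+1)/4$ disjoint transpositions (\lref{l:pglord2}) and excludes it using the transposition/intercalate counts of \sref{s:interc} (\lref{l:atpord2}). Relatedly, for $m\geq 3$ your appeal to ``elementary number theory, e.g.\ Mihailescu'' to thin out $(r^m-1)/(r-1)=p^d$ does not work: for $d\geq 2$ this is the Nagell--Ljunggren equation (note $(3^5-1)/2=11^2$), which is open in general. The paper sidesteps both issues because its input theorem (\tref{t:2transab}) exploits the elementary abelian regular subgroup $\T_q\leq\Atp_i(\Q)$ --- a resource your almost simple analysis never uses --- and thereby forces the degree in the projective case to be prime before the order analysis of \lref{l:pgammalord} even begins.

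There is also a gap in your affine branch. Having a regular normal elementary abelian subgroup $N$ and identifying $\F_q$ with $N$ only gives $G\leq\AGL_d(p)$ up to permutation isomorphism, whereas the lemma (and its later use via \lref{l:affatp}, whose proof needs genuine additivity with respect to the field structure) requires containment in the standard $\AGL_d(p)\leq\Sym(\F_q)$. This upgrade is not automatic: you would need $N=\T_q$, but for odd $p$ and $d\geq 2$ the group $\AGL_d(p)$ contains elementary abelian regular subgroups other than the translations (those of the form $x\mapsto x+b+xb$ attached to a commutative nilpotent algebra such as $x\F_p[x]/(x^3)$), so the mere presence of $\T_q$ in $G$ does not identify the normal subgroup with $\T_q$. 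The paper's actual proof of the lemma, following \cyref{c:notpgl}, is devoted precisely to this point: it shows the conjugating permutation normalizes $\T_q$ (via intersections of conjugates of $\T_q$ and conjugation by the maps $\lambda_c$, $c\in\R_q$), hence lies in $\AGL_d(p)$. Finally, your $\Alt/\Sym$ exclusion via $3$-cycles, centralizers of row-permutation groups and Jordan-type theorems is left as a programme and is much heavier than necessary; the paper dispatches this case with the known upper bound on $|\Atp(Q)|$ (\lref{l:atpnotaltsym}), which is smaller than $q!/2$.
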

	
	The group $\T_q$ is an elementary abelian regular subgroup of $\Atp_i(\Q)$. This allows us to make use of \tref{t:2transab} below, which is a special case of~\cite[Theorem $1.1$]{primgrpabeliansubgrp}. Let $v$ be a prime power and let $u$ be a positive integer. For the remainder of this section, we will consider any subgroup of $\AGL_u(v)$ to be acting naturally on $\F_{v^u}$ and we will consider any subgroup of $\PGammaL_u(v)$ to be acting naturally on the points of the projective space $\mathbf{P}^{u-1}(v)$.
	
	\begin{thm}\label{t:2transab}
		Let $X$ be a set of cardinality $n > 23$ and let $G \leq \Sym(X)$ be $2$-transitive with an elementary abelian regular subgroup. Then up to permutation isomorphism, one of the following is true: 
		\begin{enumerate}[(i)]
			\item $G \leq \AGL_u(v)$ where $v$ is a prime and $u$ is a positive integer such that $n=v^u$.
			\item $G \in \{\Sym(X), \Alt(X)\}$.
			\item $\PGL_u(v) \leq G \leq \PGammaL_u(v)$ for some prime power $v$ and positive integer $u$ such that $n = (v^u-1)/(v-1)$ is prime. 
		\end{enumerate}
	\end{thm}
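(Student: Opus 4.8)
The plan is to obtain \tref{t:2transab} as a direct specialisation of the classification of finite primitive permutation groups possessing a regular elementary abelian subgroup given in~\cite[Theorem 1.1]{primgrpabeliansubgrp}: every $2$-transitive group is primitive, so that result applies, and the hypothesis $n>23$ is used only to clear away the finitely many small-degree exceptional configurations that occur in the general statement. So, after quoting the classification, the work is to check, for each shape of $2$-transitive group not already among (i)--(iii), that it either has degree at most $23$ or cannot contain a regular elementary abelian subgroup at all.

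First I would dispose of the affine case. A $2$-transitive group of affine type acts on $\F_p^e$ for some prime $p$, with $n=p^e$, and contains the translation group; since $\F_p$ has no nontrivial field automorphisms, $\GammaL_e(p)=\GL_e(p)$, so the group lies in $\AGL_e(p)$, which is outcome~(i) with $v=p$ and $u=e$ (and the bound $n>23$ is not used). For the almost simple case, I would record two elementary facts: a permutation group of degree $n$ can contain a regular elementary abelian subgroup only if $n$ is a prime power, and if $n$ is prime then every regular subgroup is elementary abelian. Now run through the almost simple $2$-transitive groups, a finite list by the classification of finite simple groups. The natural actions of $\Alt(X)$ and $\Sym(X)$ on a prime-power number of points yield outcome~(ii): when $p^e>2$, the regular representation of $(\mathbb{Z}/p)^e$ on itself consists of even permutations, so a regular elementary abelian subgroup already lies in $\Alt(X)$. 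Groups $G$ with $\PSL_u(v)\trianglelefteq G\le\PGammaL_u(v)$ acting on $\mathbf{P}^{u-1}(v)$ have degree $(v^u-1)/(v-1)$, and by the classification such a $G$ has a regular elementary abelian subgroup exactly when this degree is prime (when it does, a Singer cycle supplies one), which is outcome~(iii). Every remaining almost simple $2$-transitive group --- the two $2$-transitive actions of $\mathrm{Sp}_{2m}(2)$ ($m\ge3$) of degrees $2^{m-1}(2^m\pm1)$; the actions of $\mathrm{PSU}_3(v)$, $\mathrm{Sz}(v)$ and $\mathrm{Ree}(v)$ of degrees $v^3+1$, $v^2+1$, $v^3+1$; and the sporadic examples, namely the Mathieu groups on $11,12,22,23,24$ points, the groups $\PSL_2(11)$ and $A_7$ on $11$ and $15$ points, $\mathrm{HS}$ on $176$, and $\mathrm{Co}_3$ on $276$ --- has degree that is either at most $23$ or has at least two distinct prime divisors. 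For the infinite families this is a short direct check: $2^{m-1}(2^m\pm1)$ has the factor $2^{m-1}\ge4$ together with the odd factor $2^m\pm1>1$; $v^3+1=(v+1)(v^2-v+1)$ is a product of two factors exceeding $1$, and a brief argument (their only possible common prime divisor is $3$) shows $v^3+1$ is a prime power only for $v=2$; and one checks directly that $v^2+1$ is not a prime power for any admissible $v=2^{2e+1}\ge8$. Of the sporadic degrees, only $24=2^3\cdot3$, $176=2^4\cdot11$ and $276=2^2\cdot3\cdot23$ exceed $23$, and none is a prime power. Hence, once $n>23$, no such group survives, so (i)--(iii) are exhaustive.

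The main obstacle here is not any computation but the reliance on the classification of finite simple groups and on a faithful transcription of the list of $2$-transitive groups together with their degrees; granting those, confirming that $23$ is the sharp threshold is routine bookkeeping. If one prefers to cite rather than reprove, the whole argument reduces to noting that \tref{t:2transab} is the restriction to $2$-transitive groups of~\cite[Theorem 1.1]{primgrpabeliansubgrp}, together with the observation that every exceptional configuration recorded there has degree at most $23$.
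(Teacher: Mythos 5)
Your proposal takes essentially the same route as the paper: the paper offers no independent proof of this statement, but introduces it precisely as a special case of Theorem 1.1 of the cited classification of primitive permutation groups containing an abelian regular subgroup, which is exactly the reduction you make, with the bound $n>23$ serving only to eliminate the small exceptional configurations. Your additional sweep through the almost simple $2$-transitive groups (checking prime-power degrees, Singer subgroups, and the sporadic cases) is correct but is routine bookkeeping that the paper delegates entirely to the quoted classification.
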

	
	Let $i \in \{1, 2, 3\}$. If $\Atp_i(\Q)$ is $2$-transitive, then it satisfies one of $(i)$, $(ii)$, or $(iii)$ in \tref{t:2transab}. We first rule out the possibility that $\Atp_i(\Q)$ satisfies \tref{t:2transab}$(ii)$.
	
	\begin{lem}\label{l:atpnotaltsym}
		Let $n \geq 7$ be a positive integer, let $Q$ be a quasigroup of order $n$, and let $i \in \{1, 2, 3\}$. Then $\Atp_i(Q) \not\in \{\Sym(Q), \Alt(Q)\}$.
	\end{lem}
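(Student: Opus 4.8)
The plan is to argue by contradiction, exploiting the rigidity of quasigroup autotopisms. Suppose $\Atp_i(Q) \in \{\Sym(Q), \Alt(Q)\}$ for some $i \in \{1,2,3\}$; by symmetry (passing to parastrophes, which permute the three coordinates of autotopisms and preserve the quasigroup property) it suffices to treat the case $i = 3$, so that $\gamma$ can be an arbitrary even permutation of $Q$. The key structural fact I would use is that for a fixed pair $(\alpha,\beta)$ there is at most one $\gamma$ with $(\alpha,\beta,\gamma) \in \Atp(Q)$ — indeed $\gamma$ is forced by $\gamma(x*y) = \alpha(x)*\beta(y)$ — and symmetrically $\alpha$ is determined by $(\beta,\gamma)$ and $\beta$ by $(\alpha,\gamma)$. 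Consequently $|\Atp(Q)| \leq |\Atp_1(Q)| \cdot |\Atp_2(Q)|$, but more usefully: if $\gamma$ ranges over all of $\Alt(Q)$ (size $\geq n!/2$), then since each $\gamma$ admits at most $|\Atp_1(Q)| \leq n!$ choices of $\alpha$ completing it, this alone is not yet a contradiction, so I need a sharper count.

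The sharper tool is the classical bound on autotopism groups of Latin squares: $|\Atp(Q)| \le n^3 \cdot c^{\log_2 n}$-type estimates are too weak, so instead I would use the elementary fact that an autotopism $(\alpha,\beta,\gamma)$ is determined by the images $\alpha(x_0)$, $\beta(y_0)$ for a single fixed cell $(x_0,y_0)$ together with $\gamma$ restricted to a single row and column — more precisely, once $\alpha$ and $\beta$ are pinned down on enough of $Q$, the whole triple is pinned down. The cleanest route: fix $x_0 \in Q$. The map $\gamma \mapsto (\alpha,\beta,\gamma)$ being injective on autotopisms means $|\Atp(Q)| \leq |\Sym(Q)|$ trivially, which is no help; what I actually want is that $\alpha$ is determined by $\gamma$ up to the choice of $\alpha(x_0)$ (because fixing $\gamma$ and $\alpha(x_0)$, the row through $x_0$ in the table, namely $\{(x_0,y,x_0*y)\}$, maps to $\{(\alpha(x_0), \beta(y), \gamma(x_0*y))\}$, and knowing $\gamma(x_0 * y)$ for all $y$ together with $\alpha(x_0)$ recovers $\beta$ from the Latin square property, and then $\alpha$ is recovered from any other row). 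So for fixed $i=3$, $|\Atp_3(Q)| \le |\Atp(Q)| \le n \cdot |\Atp_3(Q)| / (\text{something})$ — this circularity shows I must instead bound $\alpha, \beta$ in terms of $\gamma$: for each $\gamma \in \Atp_3(Q)$ there are at most $n$ pairs $(\alpha,\beta)$ (parametrised by $\alpha(x_0) \in Q$). Hence $|\Atp(Q)| \le n \cdot |\Atp_3(Q)|$, which is consistent with $\Atp_3(Q) = \Alt(Q)$ and gives nothing.

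The genuine obstacle — and where the real argument lies — is therefore not a counting argument but a \emph{transitivity/primitivity} argument on the other coordinates. If $\Atp_3(Q) = \Alt(Q)$ or $\Sym(Q)$, then in particular $\Atp_3(Q)$ is $2$-transitive, and one then analyses $\Atp_1(Q)$: for each fixed $\gamma$, as $\alpha(x_0)$ ranges over $Q$ we get $n$ distinct autotopisms, so $\Atp_1(Q)$ is at least as transitive as $\Atp_3(Q)$ composed with translations; pushing this, $\Atp_1(Q)$ and $\Atp_2(Q)$ are also $\Sym$ or $\Alt$. But a quasigroup whose three autotopism projections are all $\Alt(Q)$ or $\Sym(Q)$ would have an autotopism group of size $\geq (n!/2)$, and combined with the at-most-$n$ multiplicity, $|\Atp(Q)| \ge n!/2$; yet an autotopism is determined by where it sends the $2n-1$ triples in one fixed row and column, and a row-and-column image is itself constrained to be a row-and-column of the \emph{same} Latin square (a "Latin line-through-a-point"), of which there are at most $n \cdot n! $ — still not tight enough. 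The step I expect to be the crux, and which I would pursue via a short direct argument rather than enumeration: show that if $\gamma$ is a $3$-cycle $(p\ q\ r)$ fixing everything else, then the forced $\alpha,\beta$ must also be "small-support" permutations, and then derive a contradiction from the Latin-square equations at a cell far from $\{p,q,r\}$ — concretely, the constraint $\gamma(x*y) = \alpha(x)*\beta(y)$ at such a cell forces $\alpha(x)*\beta(y) = x*y$, which over all such cells forces $\alpha = \beta = \Id$ outside a small set, whence $\gamma(x*y) = x*y$ for all $x,y$ outside that set, contradicting $\gamma \ne \Id$ once $n \geq 7$ makes the "small set" too small to cover a full row. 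I would carry this out in three steps: (1) reduce to $i=3$ via parastrophy; (2) take $\gamma$ a $3$-cycle and show the matching $\alpha,\beta$ have support contained in a bounded set $B$ with $|B| \le 6$; (3) for $n \ge 7$ pick a cell $(x,y)$ with $x,y,x*y \notin B \cup \{p,q,r\}$ — possible by counting — to get $\gamma(x*y) = x*y$, the desired contradiction.
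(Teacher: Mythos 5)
Your proposal never arrives at a proof: after correctly observing that the naive counts give nothing, you declare that the result cannot be a counting argument and pivot to a sketch whose crux you explicitly leave unproven. Step (2) of your plan --- that if $\gamma$ is a $3$-cycle then the companion permutations $\alpha,\beta$ in an autotopism $(\alpha,\beta,\gamma)$ have support of size at most $6$ --- is only asserted, and the heuristic you give for it (``$\alpha(x)*\beta(y)=x*y$ at most cells forces $\alpha=\beta=\Id$ outside a small set'') does not follow without a real argument: knowing that row $x$ agrees with row $\alpha(x)$ after the column relabelling $\beta$ on $n-3$ columns does not by itself force $\alpha(x)=x$. This gap could in fact be closed, e.g.\ via the known restrictions on fixed points of components of nontrivial autotopisms (if all three components are nontrivial each has at most about $n/2$ fixed points, while a $3$-cycle has $n-3$; and if one component is the identity the others are fixed-point-free), but none of that is in your write-up, so as it stands the proposal is an outline with its central claim missing.

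Moreover, your dismissal of the counting route is exactly where you diverge from the paper, and it is mistaken. The paper's proof is a two-line counting argument: it quotes the known bound $|\Atp(Q)|\le n^2\prod_{i=1}^{\lfloor\log_2 n\rfloor}(n-2^{i-1})$, which is $n^{O(\log n)}$ and already smaller than $n!/2$ for every $n\ge 7$ (for $n=7$ it gives $1470<2520$), and then uses the trivial fact that $\Atp_i(Q)$ is the image of $\Atp(Q)$ under the projection homomorphism, so $|\Atp_i(Q)|\le|\Atp(Q)|<n!/2$, ruling out both $\Alt(Q)$ and $\Sym(Q)$. The ``$n^{3}\cdot c^{\log_2 n}$-type estimates'' you wave away are precisely of the strength needed; the lesson is that the quantity to bound is $|\Atp(Q)|$ itself against $n!/2$, not the fibres of the projection. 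If you want to keep your structural $3$-cycle argument, you must actually prove step (2) (or cite a fixed-point theorem for autotopisms); otherwise the quotable bound on $|\Atp(Q)|$ gives the lemma immediately.
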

	\begin{proof}
		By~\cite[Theorem $3.1$]{atpbound},
		\begin{equation}\label{e:atpbound}
			|\Atp(Q)| \leq n^2\prod_{i=1}^{\lfloor \log_2(n) \rfloor} (n-2^{i-1}).
		\end{equation}
		It is easy to see that $n!/2$ is strictly larger than the right hand side of \eref{e:atpbound} since $n \geq 7$. The result then follows, since $|\Atp_i(Q)| \leq |\Atp(Q)|$.
	\end{proof}
	
	Let $i \in \{1, 2, 3\}$. Next, we rule out the possibility that $\Atp_i(\Q)$ satisfies \tref{t:2transab}$(iii)$. Suppose that $\Atp_i(\Q)$ does satisfy \tref{t:2transab}$(iii)$. So $q$ is prime, there is some prime power $v$ and some positive integer $u$ such that $(v^u-1)/(v-1) = q$, and up to permutation isomorphism, $\PGL_{u}(v) \leq \Atp_i(\Q) \leq \PGammaL_{u}(v)$. 
	To reach a contradiction, we consider two cases, depending on whether or not $v=2$. We first deal with the case where $v \neq 2$. To do this, we prove that there must be an element in $\PGammaL_u(v)$ of order $(q-1)/2 = (v^u-v)/(2(v-1))$. We then show that this is impossible. Note that $\PGammaL_u(2)$ does contain an element of order $2^{u-1}-1$ and so this method cannot be extended to include the case where $v=2$.
	
	\begin{lem}\label{l:atpelord}
		For each $i \in \{1, 2, 3\}$, there is an element in $\Atp_i(\Q)$ of order $(q-1)/2$.
	\end{lem}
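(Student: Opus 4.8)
The plan is to exhibit a single automorphism of $\Q$ whose underlying permutation of $\F_q$ has order exactly $(q-1)/2$. Since every automorphism $\theta$ of $\Q$ yields an autotopism $(\theta, \theta, \theta)$, such a permutation lies in $\Atp_i(\Q)$ for all three $i \in \{1, 2, 3\}$ simultaneously, which is exactly what the statement demands.

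Concretely, I would let $\nu$ be a generator of the cyclic group $\R_q$, so that $\nu$ has multiplicative order $(q-1)/2$, and consider the map $\lambda_\nu \colon x \mapsto \nu x$. First I would check that $\lambda_\nu \in \Aut(\Q)$. Writing $x * y = x + \varphi(y - x)$, one has $\lambda_\nu(x) * \lambda_\nu(y) = \nu x + \varphi(\nu(y - x))$; since $\chi(\nu) = 1$, the element $\nu(y-x)$ has the same quadratic character as $y - x$, so $\varphi(\nu(y - x)) = \nu \varphi(y - x)$, and hence $\lambda_\nu(x) * \lambda_\nu(y) = \nu\big(x + \varphi(y - x)\big) = \lambda_\nu(x * y)$. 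Alternatively, this follows at once from \tref{t:quadaut}, as $\lambda_\nu$ is of the form $x \mapsto \nu\theta(x) + \mu$ with $\nu \in \R_q$, $\mu = 0$, and $\theta = \Id \in \Gal(\F_q \mid \K)$, and such a map belongs to $\Aut(\Q)$ in every case, including the exceptional ones, where the automorphism group only grows.

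It follows that $\lambda_\nu \in \Atp_i(\Q)$ for each $i \in \{1,2,3\}$. To finish, note that $\lambda_\nu^k = \lambda_{\nu^k}$ and that $\lambda_{\nu^k} = \Id$ precisely when $\nu^k = 1$, so the order of $\lambda_\nu$ as a permutation of $\F_q$ equals the multiplicative order of $\nu$, namely $(q-1)/2$.

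There is essentially no obstacle here; the only point needing (trivial) care is that the scalar must be chosen in $\R_q$ rather than in all of $\F_q^*$, since $\lambda_c$ with $c \in \N_q$ is an isomorphism from $\L[a, b]$ to $\L[b, a]$ and need not be an automorphism of $\Q$ — this is exactly why the order obtained is $(q-1)/2$ and not $q - 1$.
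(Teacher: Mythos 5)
Your proof is correct and follows essentially the same route as the paper: the paper's one-line proof takes $\lambda_{\mu^2}$ with $\mu$ a primitive element of $\F_q^*$, which is exactly your $\lambda_\nu$ with $\nu$ a generator of $\R_q$. Your added verification that $\lambda_\nu$ is an automorphism (hence lies in every $\Atp_i(\Q)$) and that its order equals the multiplicative order of $\nu$ simply makes explicit what the paper leaves implicit.
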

	\begin{proof}
		Let $i \in \{1, 2, 3\}$ and let $\mu$ be a primitive element of $\F_q^*$. The map $\lambda_{\mu^2} \in \Atp_i(\Q)$ has order $(q-1)/2$.	
	\end{proof}
	
	Suppose that $v \neq 2$ is a prime power and $u$ is a positive integer such that $(v^u-1)/(v-1) = q$ is prime. Our next task is to rule out the existence of an element of $\PGammaL_u(v)$ with order $(v^u-v)/(2(v-1))$. To do this, we will use the following lemma regarding the order of elements in $\GL_u(v)$ (see, e.g.,~\cite{matrixord}).
	
	\begin{lem}\label{l:glord}
		Let $v$ be a prime power and let $u$ be a positive integer. Let $A \in \GL_u(v)$ and let $f \in \F_v[x]$ be the minimal polynomial of $A$ with distinct irreducible factors $f_1, f_2, \ldots, f_k$. For $i \in \{1, 2, \ldots, k\}$, let $e_i$ be the degree of $f_i$ and let $m_i$ be the multiplicity of $f_i$ in $f$. Let $m = \max\{m_i : i \in \{1, 2, \ldots, k\}\}$ and let $n = \lceil \log_r(m) \rceil$ where $r$ is the characteristic of $\F_v$. Then the order of $A$ divides $\textnormal{lcm}(v^{e_1}-1, v^{e_2}-1, \ldots, v^{e_k}-1) r^n$.
	\end{lem}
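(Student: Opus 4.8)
The plan is to convert the statement about the multiplicative order of $A$ into a divisibility statement about polynomials over $\F_v$ and then analyse it one irreducible factor at a time. The starting observation is that for a positive integer $N$ we have $A^N = I$ if and only if $x^N - 1$ annihilates $A$, which happens if and only if the minimal polynomial $f$ divides $x^N - 1$. So it suffices to take the specific integer $N_0 = \textnormal{lcm}(v^{e_1}-1, \ldots, v^{e_k}-1)\, r^n$ and show $f \mid x^{N_0} - 1$; the order of $A$ then divides $N_0$, which is exactly the claim.

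First I would write $N_0 = r^n M$ with $M = \textnormal{lcm}(v^{e_1}-1, \ldots, v^{e_k}-1)$ and note that $M$ is coprime to $r$: since $r \mid v$, each $v^{e_i} - 1 \equiv -1 \pmod r$. Passing to an algebraic closure of $\F_v$, which has characteristic $r$, the Frobenius identity gives $x^{N_0} - 1 = (x^M - 1)^{r^n}$. Now fix $i$. Because $A$ is invertible, $x \nmid f$, so $f_i \neq x$ and every root $\zeta$ of $f_i$ is a nonzero element of $\F_{v^{e_i}}$; hence $\zeta$ has multiplicative order dividing $v^{e_i} - 1$, which divides $M$, so $\zeta^M = 1$. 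Since $f_i$ is the (monic) minimal polynomial over $\F_v$ of any of its roots, it follows that $f_i \mid x^M - 1$, and therefore $f_i^{m_i} \mid (x^M - 1)^{m_i}$. As $m_i \le m \le r^{\lceil \log_r m \rceil} = r^n$, we get $(x^M - 1)^{m_i} \mid (x^M - 1)^{r^n} = x^{N_0} - 1$, so $f_i^{m_i} \mid x^{N_0} - 1$. Since distinct irreducible polynomials over a field are pairwise coprime, $f = \prod_{i=1}^{k} f_i^{m_i}$ divides $x^{N_0} - 1$, and we are done.

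The only point that needs genuine care is the separation of $N_0$ into its $r$-part and its $r'$-part together with the characteristic-$r$ Frobenius computation $x^{N_0}-1 = (x^M-1)^{r^n}$: this is what lets one pass from a bound on the roots (handled by $M$) to a bound on their multiplicities (handled by $r^n$), and the definition $n = \lceil \log_r m \rceil$ is tailored precisely so that $r^n$ dominates every $m_i$. The remaining ingredients — the equivalence between $A^N = I$ and $f \mid x^N - 1$, the location of the roots of an irreducible factor of degree $e_i$ in $\F_{v^{e_i}}^*$, and the coprimality of distinct irreducibles — are standard facts of linear and field algebra and need no more than a sentence each.
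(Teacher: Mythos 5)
Your proof is correct. Note that the paper itself does not prove this lemma at all: it is quoted as a known fact with a pointer to the literature (``see, e.g.,~\cite{matrixord}''), so there is no internal argument to compare against. Your write-up is the standard self-contained proof of that classical result, and every step checks out: the reduction of the order statement to the divisibility $f \mid x^{N_0}-1$ via the minimal polynomial; the observation that $M=\textnormal{lcm}(v^{e_1}-1,\ldots,v^{e_k}-1)$ is coprime to $r$ because $v^{e_i}-1\equiv -1 \pmod r$; the Frobenius identity $x^{N_0}-1=(x^{M}-1)^{r^{n}}$ in characteristic $r$; the fact that $x\nmid f$ (invertibility of $A$) places all roots of $f_i$ in $\F_{v^{e_i}}^{*}$, giving $f_i \mid x^{M}-1$; the bound $m_i\le m\le r^{n}$ which makes $f_i^{m_i}$ divide $(x^{M}-1)^{r^{n}}$; and finally pairwise coprimality of the distinct irreducibles to assemble $f=\prod_i f_i^{m_i}$. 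The degenerate case $m=1$, $n=0$ is also handled correctly by your inequalities, so the argument is complete as stated.
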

	
	\begin{lem}\label{l:pgammalord}
		Suppose that $q$ is prime and that there is a prime power $v \neq 2$ and a positive integer $u$ such that $q=(v^u-1)/(v-1)>23$. There is no element in $\PGammaL_u(v)$ of order $(q-1)/2$. 
	\end{lem}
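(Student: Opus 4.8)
The plan is to suppose, for a contradiction, that some $g \in \PGammaL_u(v)$ has order $N := (q-1)/2$, push the problem down into $\GL_u(v)$, and then contradict \lref{l:glord}. Write $v = r^s$ with $r$ prime. The starting observation is the identity
\[
q - 1 = \frac{v^u - 1}{v-1} - 1 = \frac{v\,(v^{u-1}-1)}{v-1},
\]
so that $N = v(v^{u-1}-1)/\big(2(v-1)\big)$; since $(v^{u-1}-1)/(v-1) = 1 + v + \cdots + v^{u-2} \equiv 1 \pmod v$, the $r$-part of $q-1$ is exactly $v = r^s$. I would then lift $g$ to an element $\tilde g = A\phi$ of $\GammaL_u(v) = \GL_u(v) \rtimes \Gal(\F_v|\F_r)$, with $\phi$ of order $t \mid s$. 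Then $\tilde g^{\,t} =: B$ lies in $\GL_u(v)$, a short computation gives $\mathrm{ord}(\tilde g) = t\cdot\mathrm{ord}(B)$, and since $\PGammaL_u(v)$ is the quotient of $\GammaL_u(v)$ by its cyclic group of scalars of order $v-1$, we also get $\mathrm{ord}(\tilde g) = Nh$ for some $h \mid v-1$. Hence
\[
\mathrm{ord}(B) = \frac{Nh}{t} = \frac{(q-1)h}{2t} = \frac{v\,(v^{u-1}-1)\,h}{2t\,(v-1)}, \qquad h \mid v-1,\ \ t \mid s.
\]

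Next I would apply \lref{l:glord} to $B$: writing the minimal polynomial of $B$ as a product of distinct irreducible factors of degrees $e_1,\dots,e_k$ with multiplicities $m_1,\dots,m_k$, we have $\sum_i m_i e_i \leq u$ and $\mathrm{ord}(B) \mid \mathrm{lcm}(v^{e_1}-1,\dots,v^{e_k}-1)\cdot r^n$, where $r^n$ is the least power of $r$ that is at least $m := \max_i m_i$. Two consequences drive the argument. \emph{First}, since each $v^{e_i}-1$ is coprime to $r$, the $r$-part of $\mathrm{ord}(B)$ divides $r^n$; but from the displayed formula this $r$-part equals $r^{\,s-\nu_r(2t)}$, which exceeds $1$ (using $v \neq 2$, and noting that $v = 4$ makes $q$ composite or at most $23$, so it does not occur). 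Hence $n \geq 1$, and therefore $m > r^{\,n-1} \geq 1$, i.e.\ $m \geq 2$. \emph{Second}, assume $u \geq 3$ (the case $u=2$ is treated at the end) and let $\ell$ be a primitive prime divisor of $v^{u-1}-1$; by Zsygmondy's theorem this exists except when $u = 3$ and $v+1$ is a power of $2$, or $u = 7$ and $v = 2$. In the main case we have $\ell \mid q-1$, $\ell \nmid v-1$ (so $\ell \nmid h$), and $\ell$ is odd with $\ell \geq u$ (since $\mathrm{ord}_\ell(v) = u-1$), so $\ell \nmid 2t$ and thus $\ell \mid \mathrm{ord}(B)$; since $\ell \neq r$, it follows that $\ell \mid v^{e_i}-1$ for some $i$, whence $(u-1)\mid e_i$, so $e_i = u-1$ (as $u < 2(u-1)$ when $u \geq 3$).

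These two facts collide: the second forces one irreducible factor of degree $u-1$ with multiplicity $1$, leaving room for at most one further factor, necessarily linear and of multiplicity $1$, so $\max_i m_i = 1$, contradicting $m \geq 2$. It then remains to dispose of the excluded configurations. If $u = 2$, then $q = v+1$, so $v = 2^s$ and $N = 2^{s-1}$; but in $\PGL_2(2^s)$ semisimple elements have odd order and unipotent elements have order $2$, so the $2$-part of the order of any element of $\PGammaL_2(2^s) = \PGL_2(2^s)\rtimes C_s$ is at most $2^{1+\nu_2(s)} \leq 2s < 2^{s-1}$ unless $s \leq 4$, and for $s \leq 4$ one has $q \in \{5,9,17\}$, so $q$ is at most $23$ or not prime. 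If $u = 3$ and $v+1$ is a power of $2$, then $v = 2^k-1$ is a Mersenne prime, and either $v = 3$ (so $q = 13 \leq 23$) or $k$ is odd, in which case $v \equiv 1 \pmod 3$ and $q = 1+v+v^2$ is divisible by $3$, hence composite. The one genuinely delicate residual case is when no primitive prime of $v^{u-1}-1$ survives division by $2t$ (i.e.\ $\nu_\ell(q-1)\leq\nu_\ell(2t)$ for every such $\ell$, which forces $u \leq s$ and, via the first point's estimate $m > r^{\,n-1}$, bounds $s$); here I would re-run the $r$-part versus prime-to-$r$-part dichotomy more carefully, comparing $\nu_\ell(v^{u-1}-1)$ with $\nu_\ell(2t)$ factor by factor. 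I expect this bookkeeping to be the main obstacle — rigorously extracting the module-structure dichotomy from \lref{l:glord} and clearing the handful of small and exceptional cases — and the hypothesis $q > 23$ is exactly what makes the final inequalities strict.
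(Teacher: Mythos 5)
Your route is genuinely different from the paper's (which reduces $\PGammaL_u(v)\to\GammaL_u(v)\to\GL_u(v)$ and then kills orders $(q-1)/(2x)$, $x\mid s$, by a lengthy case analysis of the divisibility constraints coming from \lref{l:glord}), and most of it checks out: the reduction $\mathrm{ord}(B)=(q-1)h/(2t)$ with $h\mid v-1$, $t\mid s$ is sound, the computation of the $r$-part forcing $m\geq 2$ is correct (with $v=4$ correctly excluded), the multiplicity collision with a degree-$(u-1)$ factor is a clean contradiction, and your disposal of $u=2$ and of the Zsigmondy exception $u=3$, $v+1=2^k$ is correct. However, there is a genuine gap at the step where you need a primitive prime divisor $\ell$ of $v^{u-1}-1$ to divide $\mathrm{ord}(B)$. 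Your inline justification ``$\ell\geq u$, so $\ell\nmid 2t$'' is a non sequitur: $t$ divides $s=\log_r v$, which is not bounded in terms of $u$, so $\ell$ can perfectly well divide $t$, and what you actually need is $\nu_\ell(v^{u-1}-1)>\nu_\ell(t)$ for some primitive $\ell$. You acknowledge this yourself at the end, but the ``residual case'' (every primitive prime of $v^{u-1}-1$ has its full multiplicity absorbed by $t$) is only described as bookkeeping you would do, not actually done; as written the argument is therefore incomplete precisely at its load-bearing step.

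For what it is worth, the gap is closable along the lines you indicate: the residual condition says the entire primitive part of $v^{u-1}-1$ divides $t\leq s$, and since that primitive part is at least $\Phi_{u-1}(v)$ divided by the largest prime factor of $u-1$, with $\Phi_{u-1}(v)\geq (v-1)^{\phi(u-1)}$ and $v-1\geq 2^s-1>s$, one is forced into $\phi(u-1)=1$ and then $v-1\leq 2s$, i.e.\ $u\leq 3$ and $v\in\{3,4\}$, which give $q\in\{13,21\}$ and are excluded by $q>23$ prime. Until an argument of this kind is written out, though, your proof does not yet establish the lemma; by contrast, the paper never invokes Zsigmondy and instead grinds out the $\GL_u(v)$ case directly from \lref{l:glord} via its Facts $1$--$4$ and the inequalities \eref{e:disum}--\eref{e:k1}, at the cost of a much longer case analysis. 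Your approach, once the residual case is filled in, is arguably shorter and more conceptual.
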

	\begin{proof}	
		If $(u, v) \in \{(2, 4), (2, 8), (2, 16), (3, 3)\}$, then $q \leq 23$, which is false. We can use a computer to verify that $\PGammaL_3(8)$ has no element of order $36$.
		Therefore, we may assume that $(u, v) \not\in \{(2, 4), (2, 8), (2, 16), (3, 3), (3, 8)\}$. From~\cite[Lemma $3.1$]{prime}, we know that $u$ is prime and if $v=4$, then $u=2$.		
		
		Let $r$ be the characteristic of $\F_v$ and let $s = \log_r v$. The proof of this lemma has three parts. First, we prove that there is no element in $\GL_u(v)$ of order $(q-1)/(2x)$ for any divisor $x$ of $s$. Then, we prove that there is no element in $\GammaL_u(v)$ of order $(q-1)/2$. Finally, we prove that there is no element in $\PGammaL_u(v)$ with order $(q-1)/2$.
		
		\medskip
		
		Let $x$ be a divisor of $s$ and suppose, for a contradiction, that $A$ is an element of $\GL_u(v)$ of order $(q-1)/(2x)$. First, suppose that $u=2$. Then $(q-1)/(2x) = v/(2x)$ and so $v = 2^s$. It is well known that every element of $\GL_2(2^s)$ must have order dividing $2(2^s-1)$ or $2^{2s}-1$. The only way that $v/(2x) = 2^{s-1}/x$ divides $2(2^s-1)$ or $2^{2s}-1$ is if $2^{s-1}/x$ divides $2$. Therefore, $2^{s-1} \leq 2x \leq 2s$, which implies that $s \leq 4$. But this implies that $(u, v) \in \{(2, 4), (2, 8), (2, 16)\}$, which is a contradiction.
		
		Now suppose that $u \geq 3$ is odd. Let $e_1, e_2, \ldots, e_k$, $m_1, m_2, \ldots, m_k$, $m$, and $n$ be the parameters from \lref{l:glord} relating to the order of $A$ so that $(q-1)/(2x)$ divides $\textnormal{lcm}(v^{e_1}-1, v^{e_2}-1, \ldots, v^{e_k}-1) r^n$ and $m_1e_1+m_2e_2+ \ldots +m_ke_k \leq u$. Without loss of generality, $1 \leq e_1 \leq e_2 \leq \ldots \leq e_k \leq u$. 
		
		Write $2x = r^wx'$ for some integer $w \geq 0$ and some integer $x'$ coprime to $r$. We now prove some restrictions on the values that $w$ can take.		
		If $w \geq s$, then $v \leq r^w \leq 2x \leq 2\log_r v$. The only way that $v \leq 2\log_r v$ is possible is if $v=4$. But as noted above, if $v=4$, then $u=2$. Thus, $w < s$. Now suppose that $w = s-1$. First consider when $r \geq 3$, so that $2|x'$. Then $r^w = 2x/x' \leq s/(x'/2) = (w+1)/(x'/2)$, which implies that $w=0$ and $x'=2$. Now consider when $r=2$. If $x' \neq 1$, then $x' \geq 3$, since $x'$ is coprime to $r$. Thus, $2^w =2x/x' \leq 2s/3 = 2(w+1)/3$, which is a contradiction. Thus, $x'=1$. Now suppose that $w=s-2$, $r=2$, and $u=3$ and suppose that $x' \neq 1$. Then $2^w =2x/x' \leq 2s/3 = 2(w+2)/3$, which implies that $w \leq 1$. Since $x'$ is coprime to $r$, it follows that $w \neq 0$, thus $w=1$. Hence, $(u, v) = (3, 8)$, which is a contradiction. To summarise, we have proved the following restrictions on $w$.
		\begin{enumerate}[\text{Fact} $1$:]
			\item $w < s$.
			\item If $w=s-1$ and $r \geq 3$, then $x'=2$.
			\item If $w=s-1$ and $r=2$, then $x'=1$.
			\item If $w = s-2$, $r=2$, and $u=3$, then $x'=1$.
		\end{enumerate}
		Since $r^{s-w}(v^{u-1}-1)/(x'(v-1)) = (q-1)/(2x)$ divides $\textnormal{lcm}(v^{e_1}-1, v^{e_2}-1, \ldots, v^{e_k}-1) r^n$ and $x'(v-1)$ is coprime to $v$, it follows that $r^{s-w}$ divides $r^n$. Thus,
		\[
		s-w \leq n = \lceil \log_r m \rceil < \log_r m+1
		\]
		and so $m > r^{s-w-1}$. Fact $1$ implies that $r^{s-w-1}$ is an integer and so $m \geq 1+r^{s-w-1}$. Hence,
		\begin{equation}\label{e:disum}
			\sum_{i=1}^{k} e_i \leq u-\sum_{i=1}^{k} (m_i-1)e_i \leq u-e_1r^{s-w-1},
		\end{equation}
		where the second inequality in \eref{e:disum} is obtained by setting $m_1 = m \geq r^{s-w-1}+1$ and $m_i = 1$ for all $i \in \{2, 3, \ldots, k\}$. By \lref{l:glord}, there exist pairwise coprime integers $\ell_1, \ell_2, \ldots, \ell_k$ such that $\ell_i | v^{e_i}-1$ for all $i \in \{1, 2, \ldots, k\}$ and $(v^{u-1}-1)/(x'(v-1)) = \ell_1\ell_2 \cdots \ell_k$. Since $v-1|v^{e_i}-1$ for all $i \in \{1, 2, \ldots, k\}$, it follows that there are integers $t_1, t_2, \ldots, t_k$ such that $t_1t_2 \cdots t_k | v-1$ and $\ell_i | t_i(v^{e_i}-1)/(v-1)$ for all $i \in \{1, 2, \ldots, k\}$. Therefore,		
		\begin{equation}\label{e:ordineq}
			\frac{v^{u-1}-1}{x'(v-1)} = \prod_{i=1}^{k} \ell_i \Bigg| \prod_{i=1}^{k} t_i \frac{v^{e_i}-1}{v-1} \Bigg| (v-1)^{1-k} \prod_{i=1}^{k} (v^{e_i}-1).
		\end{equation}
		Also, from \eref{e:disum},
		\begin{equation}\label{e:ordineq2}
			\prod_{i=1}^{k} (v^{e_i}-1) \leq \left(\prod_{i=1}^{k} v^{e_i} \right)-1 = v^{e_1+e_2+\ldots e_k}-1 \leq v^{u-e_1r^{s-w-1}}-1.
		\end{equation}
		Furthermore, the first inequality in \eref{e:ordineq2} is strict if $k \geq 2$.  
		
		We first deal with the case where $k \geq 2$. Suppose that $e_1+e_2+\ldots +e_k \leq u-2$. Then the combination of \eref{e:ordineq} and \eref{e:ordineq2} yields,
		\[
		v^{u-1}-1 < x'(v-1)^{2-k}(v^{u-2}-1) \leq x'(v^{u-2}-1).
		\]
		This implies that $v^{u-2}(v-x') < 1-x'$ and in particular $v < x' \leq 2x \leq 2\log_r v$, which is impossible. Thus, we may assume that $e_1+e_2+\ldots +e_k \geq u-1$. By \eref{e:disum}, we know that $e_1+e_2+\ldots +e_k = u-1$ and $e_1=1=r^{s-w-1}$. So $s=w+1$. Fact $2$ and Fact $3$ together imply that $x' \in \{1, 2\}$.  
		The combination of \eref{e:ordineq} and \eref{e:ordineq2} tells us that $v^{u-1}-1 < x'(v-1)^{2-k}(v^{u-1}-1)$. Thus, $(v-1)^{k-2} < x'$ and so $k=2=x'$. Since $e_1 = 1$, it follows that $e_2=u-2$ and \eref{e:ordineq} implies that $(v^{u-1}-1) | 2(v-1)(v^{u-2}-1)$. If $v^{u-1}-1 < 2(v-1)(v^{u-2}-1)$, then $v^{u-1}-1 \leq (v-1)(v^{u-2}-1)$, which is false. So $v^{u-1}-1=2(v-1)(v^{u-2}-1)$. Thus, $v^{u-2}(2-v) = 3-2v$, which implies that $3-2v \geq 2-v$. However, this forces $v \leq 1$, a contradiction.
		
		We now deal with the case where $k=1$. Since $(v^{u-1}-1)/(x'(v-1))$ divides $v^{e_1}-1$ and $v^{u-1}-1$, it must also divide $\gcd(v^{e_1}-1, v^{u-1}-1) = v^{\gcd(u-1, e_1)}-1$. If $e_1 = u-1$, then \eref{e:disum} says that $u-1 \leq u-(u-1)r^{s-w-1}$, forcing $u \leq 2$, which is false. So $e_1 \neq u-1$ and thus $\gcd(u-1, e_1) \leq (u-1)/2$. Then \eref{e:ordineq} says that $v^{u-1}-1 \leq x'(v-1)(v^{(u-1)/2}-1) \leq x'(v^{(u+1)/2}-1)$, which implies that 
		\begin{equation}\label{e:k1}
			v^{(u+1)/2}(v^{(u-3)/2}-x') \leq 1-x'.
		\end{equation}
		If $x'=1$, then \eref{e:k1} implies that $v^{(u-3)/2} \leq 1$, so $u=3$. If $x' > 1$, then \eref{e:k1} implies that $v^{(u-3)/2} < x' \leq 2\log_r v$, which also implies that $u=3$. Thus, we may assume that $u=3$. Since $e_1 \neq u-1$, it follows that $e_1=1$. So \eref{e:disum} implies that $r^{s-w-1} \leq 2$. 
		
		First consider when $r \geq 3$. Since $r^{s-w-1} \leq 2$, Fact $1$ and Fact $2$ imply that $x'=2$. So \eref{e:ordineq} implies that $(v+1)/2 = (v^{u-1}-1)/(2(v-1))$ divides $v-1$. Since $(v+1)/2 > (v-1)/2$, it follows that $(v+1)/2=v-1$. Hence, $v=3$ and so $(u, v) = (3, 3)$, a contradiction. 
		
		Now suppose that $r=2$. Since $2^{s-w-1} \leq 2$, Fact $1$, Fact $3$, and Fact $4$ together imply that $x'=1$. Hence, \eref{e:ordineq} implies that $v+1 = (v^{u-1}-1)/(v-1)$ divides $v-1$, which is impossible. This completes the proof of the fact that no element in $\GL_u(v)$ has order $(q-1)/(2x)$.
		
		\medskip
		
		We now prove that there is no element in $\GammaL_u(v)$ of order $(q-1)/2$. Recall that $\GammaL_u(v)$ is the semidirect product $\GL_u(v) \rtimes \Aut(\F_v)$ where an element $\theta \in \Aut(\F_v)$ acts entry-wise on an element $A \in \GL_u(v)$. Suppose, for a contradiction, that there is some element $A\theta \in \GammaL_u(v)$ of order $(q-1)/2$. Let $x$ be the order of $\theta \in \Aut(\F_v)$ and note that $x|s$. Then $(A\theta)^x = B$ for some $B \in \GL_u(v)$. Therefore, we can write $(q-1)/2 = yx$ where $y$ divides the order of $B$. Let $b$ denote the order of $B$ so that $b=yb'$ for some positive integer $b'$. Then $B^{b'} \in \GL_u(v)$ has order $y = (q-1)/(2x)$, which is a contradiction.
		
		\medskip
		
		We now prove that there is no element in $\PGammaL_u(v)$ of order $(q-1)/2$. Let $I$ denote the $u \times u$ identity matrix in $\GL_u(v)$. Let $Z = \{kI : k \in \F_v\}$ be the centre of $\GL_u(v)$. Recall that $\PGammaL_u(v)$ is the quotient group $\GammaL_u(v) / Z$. For an element $A\theta \in \GammaL_u(v)$,  we will denote by $\overline{A\theta}$ the image of $A\theta$ under the projection map from $\GammaL_u(v)$ to $\PGammaL_u(v)$. Suppose, for a contradiction, that there is some element $\overline{A\theta} \in \PGammaL_u(v)$ of order $(q-1)/2$. Then there is some $k \in \F_v$ such that $(A\theta)^{(q-1)/2} = kI$. Let the order of $k$ in $\F_v$ be $y$. Then the order of $(A\theta)^y$ in $\GammaL_u(v)$ is $(q-1)/2$, which is a contradiction.
	\end{proof}
	
	Let $i \in \{1, 2, 3\}$ and suppose that $\Atp_i(\Q)$ satisfies \tref{t:2transab}$(iii)$. So, up to permutation isomorphism, $\PGL_u(v) \leq \Atp_i(\Q) \leq \PGammaL_u(v)$ for some prime power $v$ and positive integer $u$ such that $q = (v^u-1)/(v-1)$ is prime. \lref{l:atpelord} and \lref{l:pgammalord} together imply that $v = 2$. To reach a contradiction, we first prove that $\Atp_i(\Q)$ must contain an element that is the product of $(q+1)/4$ disjoint transpositions. We then use some results from \sref{s:interc} about the structure of quadratic Latin squares to prove that this is impossible. Note that $2^u-1 \equiv 3 \bmod 4$ for any positive integer $u \geq 2$. 
	
	\begin{lem}\label{l:pglord2}
		Suppose that $q$ is prime and that there is an integer $u \geq 2$ such that $q = 2^u-1$. Let $G \leq \Sym(\F_q)$ be permutation isomorphic to $\PGL_u(2)$. There is some $g \in G$ that is the product of $(q+1)/4$ disjoint transpositions.
	\end{lem}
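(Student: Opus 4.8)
The plan is to realize $G \cong \PGL_u(2) = \PSL_u(2)$ acting on the $q = 2^u - 1$ points of $\mathbf{P}^{u-1}(2)$ and to exhibit an explicit involution that has exactly $(q+1)/4$ fixed points, hence is a product of $(q+1)/4 = (2^u-1 - (\text{number of fixed points}))/2 \cdot \ldots$ transpositions once we count correctly. Concretely, I would take an involution $g \in \GL_u(2)$ of the form $I + N$ where $N$ is a rank-one nilpotent matrix (i.e., a single transvection), or more generally choose the Jordan type of the involution so that the count of fixed projective points works out. A transvection fixes the hyperplane $\ker N$ pointwise together with being nontrivial only on a complementary line's worth of vectors; I would instead want an involution with a larger moved set, so I would pick $g$ with Jordan form consisting of as many $2\times 2$ blocks as possible. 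If $u = 2m$, take $g$ to be $m$ Jordan blocks of size $2$; then the fixed space of $g$ on $\F_2^u$ is $m$-dimensional, giving $2^m - 1$ fixed projective points, and the number of moved points is $(2^{2m}-1) - (2^m-1) = 2^{2m} - 2^m$, which is even, so $g$ is a product of $(2^{2m}-2^m)/2$ transpositions. That is not obviously $(q+1)/4 = 2^{2m}/4 = 2^{2m-2}$ unless $2^{2m} - 2^m = 2^{2m-1}$, which is false. So the block structure must be chosen more carefully, or $u$ is odd.

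The key computation is therefore: for an involution $g \in \GL_u(2)$ with $j$ Jordan blocks of size $2$ (and $u - 2j$ blocks of size $1$), the fixed space on $\F_2^u$ has dimension $u - j$, so $g$ fixes $2^{u-j} - 1$ points of $\mathbf{P}^{u-1}(2)$ and moves $(2^u - 1) - (2^{u-j}-1) = 2^u - 2^{u-j} = 2^{u-j}(2^j - 1)$ points, making $g$ a product of $2^{u-j-1}(2^j-1)$ disjoint transpositions. I want this to equal $(q+1)/4 = 2^{u-2}$, i.e. $2^{u-j-1}(2^j - 1) = 2^{u-2}$, i.e. $2^{j}(2^j-1)/2 = 2^{j-1}$ after dividing, which forces $2^j - 1 = 1$, i.e. $j = 1$. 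With $j = 1$ we get $2^{u-2}(2-1) = 2^{u-2}$ transpositions — exactly $(q+1)/4$. So the right choice is a \emph{single} size-$2$ Jordan block plus $u - 2$ trivial blocks, i.e. a transvection $g = I + N$ with $N$ of rank $1$ and $N^2 = 0$; such an element exists in $\GL_u(2)$ for every $u \geq 2$, has order $2$ (since the characteristic is $2$), and its image $\bar g$ in $\PGL_u(2)$ still has order $2$ and the same cycle type on projective points because the centre of $\GL_u(2)$ is trivial. Transferring through the permutation isomorphism $G \cong \PGL_u(2)$ yields the desired $g \in G \leq \Sym(\F_q)$.

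So the steps, in order, are: (1) recall $\PGL_u(2) = \GL_u(2)$ since $Z(\GL_u(2)) = 1$, and that $G$ acts as $\GL_u(2)$ on the $2^u - 1$ nonzero vectors of $\F_2^u$ (equivalently on $\mathbf{P}^{u-1}(2)$); (2) write down the transvection $g = I + N$, check $g^2 = I$; (3) compute its fixed set on $\F_2^u \setminus \{0\}$: the fixed vectors are exactly $\ker N$, a hyperplane, contributing $2^{u-1} - 1$ fixed points and $2^{u-1}$ moved points; (4) conclude $g$ is a product of $2^{u-2}$ disjoint transpositions, and note $2^{u-2} = (2^u)/4 = (q+1)/4$; (5) pull back along the permutation isomorphism to land inside $G$. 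The only mild subtlety — and the place I'd be most careful — is step (3)/(4): making sure the moved points really pair up into $2$-cycles (immediate since $g$ is an involution) and that the arithmetic $2^{u-1} = (q+1)/2$ and $2^{u-2} = (q+1)/4$ is stated cleanly using $q + 1 = 2^u$. There is no genuine obstacle here; it is a short explicit construction, and the real work (ruling this out) happens in the lemma that follows.
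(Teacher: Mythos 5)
Your proposal is correct and takes essentially the same route as the paper: the paper's explicit matrix $A$ (identity with one extra entry in position $(u,1)$) is exactly the transvection $I+N$ you construct, and it likewise counts $2^{u-1}-1$ fixed nonzero vectors (those with first coordinate $0$) and $2^{u-1}=(q+1)/2$ moved ones, pairing into $(q+1)/4$ transpositions under the identification of $\PGL_u(2)$ acting on $\mathbf{P}^{u-1}(2)$ with $\GL_u(2)$ acting on $\F_2^u\setminus\{0\}$. The preliminary Jordan-block discussion is just scaffolding; your final construction coincides with the paper's.
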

	\begin{proof}	
		Identify the action of $\PGL_u(2)$ on the points of $\mathbf{P}^{u-1}(2)$ with $\GL_u(2)$ acting naturally on the set of non-zero elements of $\F_2^u$. Let $A \in \GL_u(2)$ be defined by
		\[
		A_{i, j} = \begin{cases}
			1 & \text{if } i=j \text{ or both } i=u \text{ and } j=1, \\
			0 & \text{otherwise}.
		\end{cases}
		\]
		Let $x = (x_1, x_2, \ldots, x_u)$ be a non-zero element of $\F_2^u$. If $x_1=0$, then $Ax=x$ and there are $2^{u-1}-1$ possibilities for $x$. If $x_1=1$, then $A^2x=x \neq Ax$ and there are $2^{u-1} = (q+1)/2$ possibilities for $x$. The lemma follows.
	\end{proof}
	
	\begin{lem}\label{l:atpord2}
		Suppose that $q \equiv 3 \bmod 4$ is prime. For each $i \in \{1, 2, 3\}$, there is no permutation in $\Atp_i(\Q)$ that is the product of $(q+1)/4$ disjoint transpositions.
	\end{lem}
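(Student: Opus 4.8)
The plan is to argue by contradiction. Suppose that for some $i\in\{1,2,3\}$ there is a $\delta\in\Atp_i(\Q)$ that is a product of $(q+1)/4$ disjoint transpositions, and fix $(\alpha_1,\alpha_2,\alpha_3)\in\Atp(\Q)$ with $\alpha_i=\delta$; then $\delta$ has exactly $q-(q+1)/2=(q-1)/2$ fixed points. The first step is a general fact about autotopisms of Latin squares: the three components of any autotopism have equally many fixed points, and the fixed rows, fixed columns and fixed symbols cut out a Latin subsquare. Indeed, the cells fixed by the induced action on cells are exactly $\mathrm{Fix}(\alpha_1)\times\mathrm{Fix}(\alpha_2)$, and each carries a symbol in $\mathrm{Fix}(\alpha_3)$; counting inside the resulting partial Latin rectangle gives $\max\{|\mathrm{Fix}(\alpha_1)|,|\mathrm{Fix}(\alpha_2)|\}\le|\mathrm{Fix}(\alpha_3)|$, and applying the same inequality to the parastrophes of $\Q$ (which merely permute the roles of $\alpha_1,\alpha_2,\alpha_3$) forces $|\mathrm{Fix}(\alpha_1)|=|\mathrm{Fix}(\alpha_2)|=|\mathrm{Fix}(\alpha_3)|=:m$; the $m^2$ cells then use only $m$ symbols, each at most $m$ times, hence exactly once per row and column, so the subarray is a Latin subsquare. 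Consequently $\L=\L[a,b]$ would contain a Latin subsquare of order $m=(q-1)/2$, the largest possible order of a proper subsquare of an order-$q$ Latin square. Note that this handles all three values of $i$ simultaneously.

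It thus remains to show that $\L[a,b]$ has no Latin subsquare of order $(q-1)/2$; this is where \sref{s:interc} is used. Write $R,C,S$ for the rows, columns and symbols of such a subsquare. For distinct $r_1,r_2\in R$ the row permutation $r_{r_1,r_2}$ of $\L$ stabilises $S$ and its complement (of sizes $(q-1)/2$ and $(q+1)/2$), while for $r_1\in R$, $r_2\notin R$ it maps $S$ into the complement of $S$. By \lref{l:quadrowperms}, for $q\equiv 3\bmod 4$ every row permutation of $\L$ has the cycle structure of $\omega[a,b]$; this common cycle structure would therefore have to split off a part summing to $(q-1)/2$ and simultaneously admit a derangement carrying a $(q-1)/2$-set into its $(q+1)/2$-element complement. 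Combined with the intercalate count $N\le q(q-1)$ coming from \tref{t:quadinterc} (the case $b\in\{-a,2-a,a/(2a-1)\}$ does not arise since $q\equiv 3\bmod 4$), these requirements are incompatible, giving the contradiction. An alternative, more algebraic route normalises by a translation so that $0\in R$, turning the subsquare condition into $\varphi(C-x)=S-x$ for all $x\in R$; since $|R|=(q-1)/2$ makes the difference set $R-R$ cover all of $\F_q$ except at most one point (Cauchy--Davenport), $\varphi$ is forced to behave affinely on a set meeting both $\R_q$ and $\N_q$, contradicting $a\neq b$.

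The first step is routine bookkeeping, and the reduction to ``no maximal subsquare'' is the heart of the matter. The main obstacle is the final step: a large subsquare need not itself contain any intercalates, so one cannot simply examine the subsquare in isolation, and must instead exploit its interaction with the rest of $\L$ through the rigid row-permutation cycle structure of \lref{l:quadrowperms}/\lref{l:transcount} and the sharp intercalate count of \tref{t:quadinterc}. I expect this to be the only genuinely delicate part of the argument.
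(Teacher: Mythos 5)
Your reduction has two genuine gaps. First, the ``general fact'' you start from --- that the three components of an autotopism have equally many fixed points and that their fixed sets cut out a subsquare --- is false as stated: for the additive group, $(\Id,\tau_c,\tau_c)$ with $c\neq 0$ is an autotopism whose first component fixes everything while the other two are fixed-point-free. Your counting chain $\max\{|\mathrm{Fix}(\alpha_1)|,|\mathrm{Fix}(\alpha_2)|\}\leq|\mathrm{Fix}(\alpha_3)|$ only holds when \emph{both} of the first two fixed sets are non-empty, and you never show that the components other than $\delta$ have any fixed points at all. This is exactly the point where the paper invokes outside results: a cited lemma forces the other two components to have order $2$, hence (as $q$ is odd) to have fixed points, and a theorem of McKay--Meynert--Myrvold then pins their number of fixed points at $(q-1)/2$. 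So this step is not ``routine bookkeeping''; a priori the other components could be fixed-point-free of large order and your subsquare never materialises.

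Second, and decisively, your endgame does not work. After passing to ``$\L$ has no subsquare of order $(q-1)/2$'' you discard the autotopism and try to get a contradiction purely from the common row-permutation cycle structure (\lref{l:quadrowperms}) together with the intercalate bound of \tref{t:quadinterc}. But the three requirements you list are mutually consistent: a cycle type consisting of one cycle of length $(q-1)/2$ and one of length $(q+1)/2$ splits off a part summing to $(q-1)/2$, admits a representative of that type mapping a $(q-1)/2$-set entirely into its complement (take alternate points of each cycle), and has no transpositions at all, so it is compatible with $N\leq q(q-1)$ and with \lref{l:transcount}. Hence no contradiction follows from what you have cited, and it is not even established that $\L[a,b]$ lacks a subsquare of order $(q-1)/2$. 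The paper avoids this by keeping the autotopism in play: for $x'\notin R$, the involution $\beta$ pairs up columns of $C'$, and each pair $\{y',\beta(y')\}$ with $\{x'*y',x'*\beta(y')\}\subseteq S$ yields a transposition of the single row permutation $r_{x',\alpha(x')}$; this produces at least $(q-3)/4$ transpositions in one row permutation, contradicting the at-most-two bound from \lref{l:transcount} and \lref{l:quadrowperms}. Your ``alternative algebraic route'' (Cauchy--Davenport forcing $\varphi$ to be affine on a set meeting both $\R_q$ and $\N_q$) is likewise only an unproved sketch.
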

	\begin{proof}
		By taking parastrophes, it suffices to prove the statement assuming that $i=1$.
		Suppose, for a contradiction, that $\Atp_1(\Q)$ contains an element $\alpha$ that is the product of $(q+1)/4$ disjoint transpositions and has $(q-1)/2$ fixed points. Let $\beta \in \Atp_2(\Q)$ and $\gamma \in \Atp_3(\Q)$ be such that $(\alpha, \beta, \gamma) \in \Atp(\Q)$. By using~\cite[Lemma $8$]{autquasi}, we deduce that the order of $\beta$ is $2$ and the order of $\gamma$ is $2$. Since $q$ is odd, it follows that $\beta$ and $\gamma$ must have fixed points. We can use~\cite[Theorem $1$]{smalllatin} to deduce that $\beta$ and $\gamma$ must have exactly $(q-1)/2$ fixed points. 
		
		Let $R$, $C$, and $S$ be the set of elements of $\F_q$ fixed by $\alpha$, $\beta$, and $\gamma$, respectively. Also let $R'$, $C'$, and $S'$ be the complements of $R$, $C$, and $S$ in $\F_q$, respectively. 
		Note that if $x \in R$ and $y \in C$, then $x*y \in S$. 
		Fix $x' \in R'$. We claim that there is a unique $y' \in C'$ such that $x'*y' \in S'$. For any $y \in C$, it is true that $x'*y \in S'$, since all symbols $x*y$ with $x \in R$ lie in $S$. There are $(q+1)/2$ symbols in $S'$ and there are $(q-1)/2$ elements $y \in C$ such that $x'*y \in S'$. Hence, there is a unique $y' \in C'$ such that $x'*y' \in S'$, as claimed. Let $y' \in C'$ be such that $\{x'*y', x'*\beta(y')\} \subseteq S$. Note that there are at least $(q-3)/2$ choices for $y'$. Since $(\alpha, \beta, \gamma) \in \Atp(\Q)$ and $\{x'*y', x'*\beta(y')\} \subseteq S$, it follows that $x'*y' = \alpha(x')*\beta(y')$ and $x'*\beta(y') = \alpha(x')*\beta^2(y') = \alpha(x')*y'$. Equivalently, the row permutation $r_{x', \alpha(x')}$ of $\L$ contains the transposition $(x'*y', x'*\beta(y'))$. Since there are at least $(q-3)/2$ choices for $y'$, it follows that $r_{x', \alpha(x')}$ contains at least $(q-3)/4$ transpositions. Recall that $b \not\in \{-a, 2-a, a/(2a-1)\}$ since $q \equiv 3 \bmod 4$. Thus, \lref{l:transcount} combined with \lref{l:quadrowperms} implies that $r_{x, \alpha(x)}$ contains at most two transpositions, which is a contradiction, since $q>23$.	
	\end{proof}
	
	By combining \tref{t:2transab}, \lref{l:atpnotaltsym}, \lref{l:atpelord}, \lref{l:pgammalord}, \lref{l:pglord2}, and \lref{l:atpord2}, we have the following result.
	
	\begin{cor}\label{c:notpgl}
		Let $i \in \{1, 2, 3\}$. If $\Atp_i(\Q)$ is $2$-transitive, then up to permutation isomorphism, $\Atp_i(\Q) \leq \AGL_d(p)$.
	\end{cor}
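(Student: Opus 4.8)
The plan is to run the trichotomy of \tref{t:2transab} and eliminate all but the desired possibility using the lemmas already established. Assume $\Atp_i(\Q)$ is $2$-transitive. Since $\T_q$ is an elementary abelian regular subgroup of $\Atp_i(\Q)$ and $q > 23$, \tref{t:2transab} applies, so up to permutation isomorphism $\Atp_i(\Q)$ satisfies one of (i), (ii), or (iii). Possibility (ii), namely $\Atp_i(\Q) \in \{\Sym(\F_q), \Alt(\F_q)\}$, is excluded at once by \lref{l:atpnotaltsym}, since $q > 23 \geq 7$.

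It then remains to rule out possibility (iii), so suppose $\PGL_u(v) \leq \Atp_i(\Q) \leq \PGammaL_u(v)$ with $q = (v^u-1)/(v-1)$ prime, and split on the value of $v$. If $v \neq 2$, then \lref{l:pgammalord} (which uses $q > 23$) says $\PGammaL_u(v)$ contains no element of order $(q-1)/2$, whereas \lref{l:atpelord} exhibits an element of exactly that order in $\Atp_i(\Q)$, a contradiction. If $v = 2$, then $q = 2^u - 1 \equiv 3 \bmod 4$, so $(q+1)/4$ is an integer, and \lref{l:pglord2} applied to a subgroup of $\Atp_i(\Q)$ permutation isomorphic to $\PGL_u(2)$ yields a permutation in $\Atp_i(\Q)$ that is a product of $(q+1)/4$ disjoint transpositions, contradicting \lref{l:atpord2}. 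Hence (iii) is impossible.

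Consequently only (i) can hold: up to permutation isomorphism $\Atp_i(\Q) \leq \AGL_u(v)$ with $v$ prime and $q = v^u$, and comparing this with $q = p^d$ forces $v = p$ and $u = d$, which gives the statement. I do not expect a substantive obstacle here, since this corollary is the assembly step for the section; the real work is already inside \lref{l:pgammalord} (the order-of-element estimate in $\PGammaL_u(v)$, driven by the minimal-polynomial bound \lref{l:glord}) and inside \lref{l:atpord2} (the bound on the number of transpositions in a row permutation, via \lref{l:transcount} and \lref{l:quadrowperms}). The only point needing a touch of care is checking that the \emph{up to permutation isomorphism} qualifier inherited from \tref{t:2transab} causes no trouble, and it does not, as every eliminating lemma is either phrased up to permutation isomorphism or invariant under it.
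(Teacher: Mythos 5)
Your proposal is correct and matches the paper's proof, which obtains the corollary exactly by combining \tref{t:2transab} with \lref{l:atpnotaltsym} (ruling out case (ii)), \lref{l:atpelord} and \lref{l:pgammalord} (case (iii) with $v \neq 2$), and \lref{l:pglord2} and \lref{l:atpord2} (case (iii) with $v = 2$), leaving only case (i). Your extra remarks on identifying $v = p$, $u = d$ and on invariance under permutation isomorphism are consistent with what the paper leaves implicit.
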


 We are now ready to prove \lref{l:2trans}. Let $G$ and $H$ be permutation groups with $H \leq G$ and let $g \in G$. We denote by $H^g$ the subgroup $\{g^{-1} \circ h \circ g : h \in H\} \leq G$.
	
	\begin{proof}[Proof of \lref{l:2trans}]
		Suppose that $\Atp_i(\Q)$ is $2$-transitive. From \cyref{c:notpgl},
		there is some $G \leq \AGL_d(p)$ and some $\delta \in \Sym(\F_q)$ such that $\Atp_i(\Q) = G^\delta$. Let $\T = \T_q$. Since $\T \leq \Atp_i(\Q)$, it follows that $\T^{\delta^{-1}} \leq G$. There is some $\mu \in \F_q^*$ such that $\tau_\mu \in \T \cap \T^{\delta^{-1}} \leq G$ (see, e.g.,~\cite[Lemma $4$]{transintersect}). Since $\tau_\mu \in \T^{\delta^{-1}}$, it follows that $\delta^{-1} \circ \tau_\mu \circ \delta = \tau_\nu$ for some $\nu \in \F_q^*$. Since $\T$ is normal in $\AGL_d(p)$, it follows that $\T \cap G$ is normal in $G$. Hence, $(\T \cap G)^\delta$ is normal in $\Atp_i(\Q)$. Note that $\tau_\nu \in (\T \cap G)^\delta$, since $\tau_\mu \in \T \cap G$. Since $(\T \cap G)^\delta$ is normal in $\Atp_i(\Q)$ and $\{\lambda_c : c \in \R_q\} \leq \Atp_i(\Q)$, it follows that $\tau_{c\nu} = \lambda_c \circ \tau_\nu \circ \lambda_{c^{-1}} \in (\T \cap G)^\delta$ for all $c \in \R_q$. So $|\T \cap (\T \cap G)^\delta| \geq (q+1)/2 > |\T|/2$, hence $\T =(\T \cap G)^\delta$ and so $\T^\delta = \T$. Therefore, $\delta$ is in the normaliser of $\T$ in $\Sym(\F_q)$, which is $\AGL_d(p)$. Thus, $\Atp_i(\Q) = G^\delta \leq \AGL_d(p)$.
	\end{proof}
	
	\section{Proof of main results}\label{s:quadisotop}
	
	\tref{t:quadatp} was shown to be true for quadratic quasigroups $\Q_{a, b}$ with $a=b$ in \sref{s:intro}. As mentioned at the beginning of \sref{s:prelim}, \tref{t:quadatp} is easy to verify using a computer if $q < 23$. \tref{t:quadatp} in the case where $a \neq b$ and $q \geq 23$ follows by combining \lref{l:affatp}, \lref{l:not2trans}, and \lref{l:2trans}. We now prove \tref{t:quadisotop}.
	
	\begin{proof}[Proof of \tref{t:quadisotop}]
		Let $\Q = \Q_{a, b}$ and $\Q' = \Q_{a', b'}$ be quadratic quasigroups of order $q$. Since we have already dealt with the case where $a=b$ or $a' = b'$ in \sref{s:intro}, we may assume that $a \neq b$ and $a' \neq b'$. By \tref{t:quadiso}, if $\{a, b\} = \{\theta(a'), \theta(b')\}$ for some $\theta \in \Aut(\F_q)$, then $\Q$ and $\Q'$ are isotopic. Now assume that $\Q$ and $\Q'$ are isotopic and let $(\alpha, \beta, \gamma)$ be an isotopism from $\Q$ to $\Q'$. Then $(\alpha^{-1} \circ \theta \circ \alpha, \beta^{-1} \circ \theta \circ \beta, \gamma^{-1} \circ \theta \circ \gamma) \in \Atp(\Q)$ for any $\theta \in \Aut(\Q')$. \tref{t:quadatp} implies that $\alpha^{-1} \circ \theta \circ \alpha = \beta^{-1} \circ \theta \circ \beta$ and so $\alpha \circ \beta^{-1}$ centralises $\theta$. Since $\theta \in \Aut(\Q')$ was arbitrary, it follows that $\alpha \circ \beta^{-1}$ centralises $\Aut(\Q')$. In particular, $\alpha \circ \beta^{-1}$ centralises $\T_q$ and so must be an element of $\T_q$. But $\alpha \circ \beta^{-1}$ also centralises the permutation $\lambda_c$ where $c$ is some fixed element of $\R_q \setminus \{1\}$. The only element of $\T_q$ that can centralise this permutation is $\Id$. Therefore, $\alpha = \beta$. Similar arguments show that $\beta=\gamma$ also. Hence, $\Q$ and $\Q'$ are isomorphic and the theorem follows from \tref{t:quadiso}.
	\end{proof}
	
	\section{Conclusion}\label{s:conc}
	
	An \emph{orthomorphism} of a group $(G, +)$ is a permutation $\psi$ of $G$ such that the map $x \mapsto \psi(x)-x$ is also a permutation of $G$. Suppose that $\psi$ is an orthomorphism of $G$. Define a binary operation $\cdot$ on $G$ by $x\cdot y=x+\psi(y-x)$. Then the pair $(G, \cdot)$ forms a quasigroup~\cite{orthls}. We say that $(G, \cdot)$ is \emph{generated} by $\psi$.
	
	Let $n$ and $k$ be positive integers such that $q-1=nk$. Let $\kappa$ be a primitive element of $\F_q^*$. For each $j \in \{0, 1, \ldots, n-1\}$, define $C_j = \{\kappa^{ni+j} : i \in \{0, 1, \ldots, k-1\}\}$. Then $C_j$ is a \emph{cyclotomic coset} of the unique subgroup $C_0$ in $\F_q^*$ of index $n$. For $(a_0, a_1, \ldots, a_{n-1}) \in \F_q^n$, let $\varphi = \varphi_\kappa[a_0, a_1, \ldots, a_{n-1}]$ be the map defined by
	\[
	\varphi(x) = \begin{cases}
		0 & \text{if } x=0, \\
		a_jx & \text{if } x \in C_j.
	\end{cases}
	\]
	Then $\varphi$ is a \emph{cyclotomic map of index $n$}. If $\varphi$ is cyclotomic of index $n$ and is not cyclotomic of any index $m<n$, then $\varphi$ has \emph{least index $n$}. If $n=2$, then $C_0 = \R_q$ and $C_1 = \N_q$, regardless of the choice of $\kappa$. In this case $\varphi$ is called a \emph{quadratic map}. If a cyclotomic map is also an orthomorphism, then it is called a \emph{cyclotomic orthomorphism}. If a quadratic map is also an orthomorphism, then it is called a \emph{quadratic orthomorphism}. 
	
	Quadratic quasigroups are precisely the quasigroups that are generated by a quadratic orthomorphism. Quadratic quasigroups of the form $\Q_{a, a}$ are precisely the quasigroups that are generated by a cyclotomic orthomorphism of index $1$. The main achievements of this paper can thus be stated as follows:
	\begin{equation}\label{e:atpaut}
		\begin{aligned}
			&\text{If $\Q$ is a quasigroup generated by a cyclotomic orthomorphism} \\
			&\text{of least index $2$, then every autotopism of $\Q$ is an automorphism.}
		\end{aligned}
	\end{equation}
	\begin{equation}\label{e:isotopiso}
		\begin{aligned}
			&\text{If $\Q$ and $\Q'$ are isotopic quasigroups generated by cyclotomic} \\
			&\text{orthomorphisms of least index $2$, then $\Q$ and $\Q'$ are isomorphic.}
		\end{aligned}
	\end{equation}
	
	It would be interesting to determine for which integers $n$ \eref{e:atpaut} is true if we replace $2$ by $n$ and for which integers $n$ \eref{e:isotopiso} is true if we replace $2$ by $n$. Neither \eref{e:atpaut} nor \eref{e:isotopiso} are true if we replace $2$ by $1$. Let $\Q$ and $\Q'$ be quasigroups of order $q$ that are generated by cyclotomic orthomorphisms of least index $1$. \tref{t:quadaut} and \tref{t:quadatp} together imply that there are autotopisms of $\Q$ that are not automorphisms of $\Q$. \tref{t:quadiso} and \tref{t:quadisotop} together imply that $\Q$ and $\Q'$ are isotopic, but not necessarily isomorphic. 
	
	Identify $\F_9 = \mathbb{F}_3[x]/(x^2-x-1)$. Let $\Q$ be the quasigroup generated by the cyclotomic orthomorphism $\varphi_x[-1, x+1, -x+1, x-1]$ and let $\Q'$ be the quasigroup generated by the cyclotomic orthomorphism $\varphi_x[x, x+1, -x-1, -x+1]$. Then $\Q$ and $\Q'$ are isotopic, but not isomorphic. Similarly, identify $\F_{25} = \mathbb{F}_5[x] / (x^2-x+2)$, let $\Q$ be the quasigroup generated by $\varphi_x[2, -1, -x-2, x-1, 2x-2, 2x-2]$, and let $\Q'$ be the quasigroup generated by $\varphi_x[2, -2, 2x, 1-x, x, 2-2x]$. Then $\Q$ and $\Q'$ are isotopic, but not isomorphic. Therefore, \eref{e:isotopiso} is false if we replace $2$ by $4$ or $6$. For $n \in \{3, 5\}$, there are no two quasigroups $\Q$ and $\Q'$ of order less than $50$ generated by cyclotomic orthomorphisms of least index $n$ that are isotopic but not isomorphic. For $n \in \{3, 4, 5\}$, there is no quasigroup of order less than $50$ generated by a cyclotomic orthomorphism of least index $n$ that has an autotopism that is not an automorphism. The only quasigroup of order less than $50$ generated by a cyclotomic orthomorphism of least index $6$ that has an autotopism that is not an automorphism is the quasigroup $\Q$ of order $7$ generated by $\varphi_3[3, 3, 6, 3, 2, 4]$. Explicitly, $\Aut(\Q)=\T_7$ and $\Atp(\Q) \cong \PGL_3(2)$. However, we note that a cyclotomic orthomorphism of index $q-1$ over $\F_q$ is, in some sense, a degenerate cyclotomic orthomorphism, since every orthomorphism over $\F_{q}$ that fixes $0$ is cyclotomic of index $q-1$. 
	
	The only known quasigroups that violate \eref{e:isotopiso} when $2$ is replaced by some other index are not of prime order. Thus, it would be interesting to determine for which integers $n$ \eref{e:isotopiso} is true when replacing $2$ by $n$ if we impose that the orders of $\Q$ and $\Q'$ are prime. It would also be interesting to determine for which integers $n$ \eref{e:atpaut} is true when replacing $2$ by $n$ if we impose that the orders of $\Q$ and $\Q'$ are prime.
	
	\section*{Acknowledgements}
	
	I would like to thank Ian Wanless, Ale\v{s} Dr\'{a}pal, and Melissa Lee for many helpful discussions.
	
	\printbibliography
	
\end{document}